\documentclass[a4paper,reqno,11pt]{amsart}
%%%%%%%%%%%%%%%%%%%%%%%%%%%%%%%%%%%%%%%%%%%%%%%%%%%%%%%%%%%%%%%%%%%%%%%%%%%%%%%%%%%%%%%%%%%%%%%%%%%%%%%%%%%%%%%%%%%%%%%%%%%%%%%%%%%%%%%%%%%%%%%%%%%%%%%
\textheight 220mm
\textwidth 150mm
\hoffset -16mm
%\voffset -16mm
%%%%%%%%%%%%%%%%%%%%%%%%%%%%%%%%%%%%%%%%%%%%%%%%%%%%%%%%%%%%%%%%%%%%%%%%%%%%%%%%%%%%%%%%%%%%%%%%%
\usepackage{amssymb}
\usepackage{amstext}
\usepackage{amsmath}
\usepackage{amscd}
\usepackage{amsthm}
\usepackage{amsfonts}
\usepackage{enumerate}
\usepackage{graphicx}
\usepackage{latexsym}
\usepackage{mathrsfs}
\input xy
\xyoption{all}
\usepackage{lscape}
\usepackage{pstricks}
\usepackage{comment}

\usepackage{url}

%\usepackage[pagewise]{lineno}
%\linenumbers
%%%%%%%%%%%%%%%%%%%%%%%%%%%%%%%%%%%%%%%%%%%%%%%%%%%%%%%%%%%%%%%%%%%%%%%%%%%%%%%%%%%%%%%%%%%%%%%%%
\newtheorem{theorem}{Theorem}[section]
\newtheorem{corollary}[theorem]{Corollary}
\newtheorem{lemma}[theorem]{Lemma}
\newtheorem{definition-theorem}[theorem]{Definition-Theorem}
\newtheorem{definition-proposition}[theorem]{Definition-Proposition}
\newtheorem{proposition}[theorem]{Proposition}

%%%%%%%%%%%%%%%%%%%%%%%%%%%%%%%%%%%%%%%%%%%%%%%%%%%%%%%%%%%%%%%%%%%%%%%%%%%%%%%%%%%%%%%%%%%%%%%%%
\theoremstyle{definition}
\newtheorem{definition}[theorem]{Definition}
\newtheorem{remark}[theorem]{Remark}
\newtheorem{example}[theorem]{Example}

%%%%%%%%%%%%%%%%%%%%%%%%%%%%%%%%%%%%%%%%%%%%%%%%%%%%%%%%%%%%%%%%%%%%%%%%%%%%%%%%%%%%%%%%%%%%%%%%%
\numberwithin{equation}{theorem}
%%%%%%%%%%%%%%%%%%%%%%%%%%%%%%%%%%%%%%%%%%%%%%%%%%%%%%%%%%%%%%%%%%%%%%%%%%%%%%%%%%%%%%%%%%%%%%%%%

%\renewcommand{\S}{\mathcal{S}}

%%%%%%%%%%%%%%%%%%%%%%%%%%%%%%%%%%%%%%%%%%%%%%%%%%%%%%%%%%%%%%%%%%%%%%%%%%%%%%%%%%%%%%%%%%%%%%%%%
\newcommand{\add}{\mathsf{add}\hspace{.01in}}
\newcommand{\ind}{\mathsf{ind}\hspace{.01in}}

\renewcommand{\mod}{\mathsf{mod}\hspace{.01in}}

\newcommand{\proj}{\mathsf{proj}\hspace{.01in}}

%%%%%%%%%%%%%%%%%%%%%%%%%%%%%%%%%%%%%%%%%%%%%%%%%%%%%%%%%%%%%%%%%%%%%%%%%%%%%%%%%%%%%%%%%%%%%%%%%

\newcommand{\cok}{\operatorname{Cok}\nolimits}

\newcommand{\End}{\operatorname{End}\nolimits}
\newcommand{\Ext}{\operatorname{Ext}\nolimits}
\newcommand{\gl}{\operatorname{gldim}\nolimits}

\newcommand{\Hom}{\operatorname{Hom}\nolimits}

\renewcommand{\ker}{\operatorname{Ker}\nolimits}
\newcommand{\op}{\operatorname{op}\nolimits}

\def\kk{{\mathbf k}}
%%%%%%%%%%%%%%%%%%%%%%%%%%%%%%%%%%%%%%%%%%%%%%%%%%%%%%%%%%%%%%%%%%%%%%%%%%%%%%%%%%%%%%%%%%%%%%%%%

%%%%%%%%%%%%%%%%%%%%%%%%%%%%%%%%%%%%%%%%%%%%%%%%%%%%%%%%%%%%%%%%%%%%%%%%%%%%%%%%%%%%%%%%%%%%%%%%%

%%%%%%%%%%%%%%%%%%%%%%%%%%%%%%%%%%%%%%%%%%%%%%%%%%%%%%%%%%%%%%%%%%%%%%%%%%%%%%%%%%%%%%%%%%%%%%%%%
%\renewcommand{\subjclassname}{\textup{2010} Mathematics Subject Classification}
%%%%%%%%%%%%%%%%%%%%%%%%%%%%%%%%%%%%%%%%%%%%%%%%%%%%%%%%%%%%%%%%%%%%%%%%%%%%%%%%%%%%%%%%%%%%%%%%%
\begin{document}
\title[Strongly quasi-hereditary algebras and rejective subcategories]{Strongly quasi-hereditary algebras and rejective subcategories}

\author{Mayu Tsukamoto}\address{Department of Mathematics, Graduate School of Science, Osaka City University, 3-3-138 Sugimoto, Sumiyoshi-ku, Osaka 558-8585, Japan}
\curraddr{Graduate school of Sciences and Technology for Innovation, Yamaguchi University, 1677-1 Yoshida, Yamaguchi 753-8511, Japan}
\email{tsukamot@yamaguchi-u.ac.jp}
%\date{\today}

\subjclass[2010]{16G10, 18A40}

\begin{abstract}
Ringel's right-strongly quasi-hereditary algebras are a distinguished class of quasi-hereditary algebras of Cline--Parshall--Scott.
We give characterizations of these algebras in terms of heredity chains and right rejective subcategories.
We prove that any artin algebra of global dimension at most two is right-strongly quasi-hereditary.
Moreover we show that the Auslander algebra of a representation-finite algebra $A$ is strongly quasi-hereditary if and only if $A$ is a Nakayama algebra.
\end{abstract}
\maketitle

\section{Introduction}
\subsection{Background}
{\it Quasi-hereditary algebras} were introduced by Scott {\cite{S}} to study highest weight categories in the representation theory of semisimple complex Lie algebras and algebraic groups.
Cline, Parshall and Scott proved many important results in \cite{{CPS}, {PS}}. 
Ringel introduced a special class of quasi-hereditary algebras called \emph{right-strongly quasi-hereditary} algebras \cite{R}, motivated by Iyama's finiteness theorem of representation dimensions of artin algebras \cite{{I}, {I2}}.  
One of the advantages of right-strongly quasi-hereditary algebras is that they have better upper bound of global dimension than that of general quasi-hereditary algebras \cite[\S 4]{R}.
By \cite{I}, it follows that any artin algebra $A$ can be written as $eBe$ for some right-strongly quasi-hereditary algebra $B$ and an idempotent $e$ of $B$.
This idea is widely applicable and hence right-strongly quasi-hereditary algebras appear in the representation theory frequently.
Also certain important algebras associated with preprojective algebras and elements in Coxeter groups are known to be right-strongly quasi-hereditary, e.g.\ \cite{{GLS}, {IR}}.
We refer to \cite{{C}, {C2}, {E}} for recent results on right-strongly quasi-hereditary algebras.

In this paper, we discuss categorical aspects of right-strongly quasi-hereditary algebras following an approach in \cite[Section 2]{I2}, which is unpublished.
In particular, we give a characterization of right-strongly quasi-hereditary algebras in terms of the following three notions (Theorem \ref{thm2}).

\begin{itemize}
\item right-strongly heredity chains (Definition \ref{sthered}),

\item total right rejective chains (Definition \ref{rejch}),

\item coreflective chains (Definition \ref{refch}).
\end{itemize}

As application, we sharpen a well-known result of Dlab and Ringel \cite[Theorem 2]{DR} stating that any artin algebra of global dimension at most two is quasi-hereditary.
We prove that such an algebra is always right-strongly (resp.\ left-strongly) quasi-hereditary (Theorem \ref{gl}).
We give a detailed proof following the strategy of \cite[Theorem 3.6]{I2}.
Moreover we show that the Auslander algebra of a representation-finite algebra $A$ is strongly quasi-hereditary if and only if $A$ is a Nakayama algebra.

\subsection{Our results}

Recall that right-strongly (resp.\ left-strongly) quasi-hereditary algebras are defined as quasi-hereditary algebras whose standard modules have projective dimension at most one (Definition \ref{left}).
Our starting point is the following observation which gives a characterization of right-strongly (resp.\ left-strongly) quasi-hereditary algebras in terms of heredity chains.

\begin{proposition} [Proposition \ref{lem1}]
Let $A$ be an artin algebra. Then $A$ is right-strongly $($resp.\ left-strongly$)$ quasi-hereditary if and only if there exists a heredity chain 
\begin{equation*}
A=H_{0}>H_{1}> \cdots > H_i > \cdots >H_{n}=0
\end{equation*}
such that $H_i$ is a projective right $($resp.\ left$)$ $A$-module for any $0 \leq i \leq n-1$.
\end{proposition}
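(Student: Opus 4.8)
The plan is to fix the heredity chain and translate the defining inequality on standard modules into the projectivity of the ideals $H_i$, layer by layer. I would choose a complete set of primitive orthogonal idempotents adapted to the chain, so that $H_i=Ag_iA$ for the idempotent $g_i=f_i+\cdots+f_{n-1}$, where $f_i$ collects the idempotents of the simples appearing in the $i$-th layer $H_i/H_{i+1}$. For a vertex $\lambda$ in the $i$-th layer the simple $S_\lambda$ is maximal in $A/H_{i+1}$, so the standard module is $\Delta(\lambda)=e_\lambda A/e_\lambda H_{i+1}=e_\lambda(A/H_{i+1})$, giving a short exact sequence $0\to e_\lambda H_{i+1}\to e_\lambda A\to\Delta(\lambda)\to0$ with $e_\lambda A$ a projective cover. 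Hence $\pd\Delta(\lambda)\le1$ is equivalent to $e_\lambda H_{i+1}$ being projective. Writing $H_{i+1}=\bigoplus_\mu e_\mu H_{i+1}$ as right $A$-modules reduces the projectivity of $H_{i+1}$ to that of all the pieces $e_\mu H_{i+1}$; this bookkeeping is the backbone of both implications. (The left-strong case is entirely dual, using $Ae_\lambda$ and left modules.)

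For the implication $(\Leftarrow)$, suppose the chain satisfies $H_i\in\proj A$ as right modules. Then each $e_\lambda H_{i+1}$ is a direct summand of the projective module $H_{i+1}$, hence projective, so $\pd\Delta(\lambda)\le1$ for every $\lambda$; thus $A$ is right-strongly quasi-hereditary by Definition \ref{left}.

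For the harder implication $(\Rightarrow)$, I assume $\pd\Delta(\lambda)\le1$ for all $\lambda$ and must upgrade this to projectivity of the full ideals $H_{i+1}$, that is, of every $e_\mu H_{i+1}$, including those $\mu$ lying in lower layers, for which $e_\mu H_{i+1}$ is a deeper syzygy rather than the first syzygy of $\Delta(\mu)$ itself. The idea is to resolve $Q_\mu:=e_\mu(A/H_{i+1})$: it fits into $0\to e_\mu H_{i+1}\to e_\mu A\to Q_\mu\to0$, so it suffices to prove $\pd_A Q_\mu\le1$. I would filter $Q_\mu$ by the images of $e_\mu H_0\supseteq e_\mu H_1\supseteq\cdots\supseteq e_\mu H_{i+1}$, whose subquotients are $e_\mu\bar H_k=e_\mu(H_k/H_{k+1})$ for $k\le i$. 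Since $H_k/H_{k+1}$ is a heredity ideal of $A/H_{k+1}$, the tensor decomposition of a heredity ideal over the semisimple ring $\bar f_k(A/H_{k+1})\bar f_k$ identifies each $e_\mu\bar H_k$ as a direct sum of copies of the standard modules $\Delta(\nu)$ with $\nu$ in layer $k$. As every such $\Delta(\nu)$ has projective dimension at most one, each subquotient has $\pd\le1$, and therefore $\pd_A Q_\mu\le1$. Consequently the syzygy $e_\mu H_{i+1}$ is projective, and summing over $\mu$ gives $H_{i+1}\in\proj A$ as a right module, as required.

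The main obstacle is exactly this last step: showing that the subquotients $e_\mu\bar H_k$ are direct sums of standard modules, so that the hypothesis $\pd\Delta\le1$ propagates through the filtration of $Q_\mu$ and forces the syzygy $e_\mu H_{i+1}$ to be projective rather than merely of projective dimension one. This uses both the compatibility of standard modules with the heredity quotients $A\to A/H_{k+1}$ and the explicit bimodule structure of a heredity ideal, and keeping the two sides consistent (right modules throughout for the right-strong statement, left modules for the left-strong one) is the other delicate point.
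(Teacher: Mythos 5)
Your proposal is correct and follows essentially the same route as the paper: the heart of both arguments is that the layers $H_k/H_{k+1}$ of a heredity chain are direct sums of standard modules (Proposition \ref{cps}(2)), so the right-strong condition $\pd\Delta\le 1$ is equivalent to $\pd_A(H_k/H_{k+1})\le 1$ for all $k$, which a syzygy argument converts into projectivity of the ideals $H_k$ themselves. The only difference is organizational: the paper performs this last conversion by a one-step-at-a-time induction (Lemma \ref{lem0}, realizing $H_{i+1}$ as the syzygy of $H_i/H_{i+1}$ inside the already-projective $H_i$), whereas you realize $e_\mu H_{i+1}$ as the syzygy of $e_\mu(A/H_{i+1})$ and bound $\pd$ of the latter by filtering it through all the layers at once — the same d\'evissage in a different order.
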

We call such a heredity chain a {\it right-strongly $($resp.\ left-strongly$)$ heredity chain}.

Moreover we give categorical interpretations of right-strongly (resp.\ left-strongly) heredity chains.
For an artin algebra $A$, there exists a bijection between idempotent ideals of $A$ and full subcategories of the category $\proj A$ of finitely generated projective $A$-modules given by $AeA \mapsto \add eA$.
This gives a bijection between chains of idempotent ideals of $A$ and chains of full subcategories of $\proj A$.
A key idea of this paper is to translate properties of idempotent ideals into properties of full subcategories of $\proj A$.

For an artin algebra $A$ and an arbitrary factor algebra $B$ of $A$, we naturally regard $\mod B$ as a full subcategory of $\mod A$.
In this case, each $X \in \mod A$ has a right (resp.\ left) $(\mod B)$-approximation of $X$ which is monic (resp.\ epic) in $\mod A$. 
More generally, subcategories of an additive category with these properties are called {\it right $(${\rm resp}.\ left$)$ rejective} in {\cite{{I3}, {I2}}}. 
They are a special class of {\it coreflective $(${\rm resp}.\ reflective$)$ subcategories} (see Definition \ref{refsub}) appearing in the classical theory of localizations of abelian categories \cite{St}.

Using the notion of right rejective (resp.\ left rejective, coreflective, reflective) subcategories, we introduce the notion of {\it total right rejective $(${\rm resp}.\ total left rejective, coreflective, reflective$)$ chains} of an additive category (Definitions \ref{rejch}, \ref{refch}).
The following main theorem in this paper characterizes right-strongly (resp.\ left-strongly) quasi-hereditary algebras in terms of these chains.

\begin{theorem} [Theorem \ref{thm1} and Theorem \ref{thm2}] \label{thm1.2}
Let $A$ be an artin algebra and 
\begin{equation} \label{srrc}
A=H_{0}>H_{1}> \cdots > H_i > \cdots >H_{n}=0
\end{equation} 
a chain of idempotent ideals of $A$. 
For $0 \leq i \leq n-1$, we write $H_i = Ae_iA$, where $e_i$ is an idempotent of $A$. 
Then the following conditions are equivalent:
\begin{itemize}
\item[{\rm (i)}] \eqref{srrc} is a right-strongly $($resp.\ left-strongly$)$ heredity chain. 

\item[{\rm (ii)}] The following chain is a total right $($resp.\ left$)$ rejective chain of $\proj A$. 
\[
\proj A=\add e_0 A \supset \add e_1 A \supset \cdots \supset \add e_n A =0.
\] 

\item[{\rm (iii)}] \eqref{srrc} is a heredity chain of $A$ and the following chain is a coreflective $($resp.\ reflective$)$ chain of $\proj A$. 
\[
\proj A=\add e_0 A \supset \add e_1 A \supset \cdots \supset \add e_n A =0.
\] 
\end{itemize}
\end{theorem}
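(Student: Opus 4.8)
The plan is to prove the three conditions equivalent via the cycle (i) $\Rightarrow$ (ii) $\Rightarrow$ (iii) $\Rightarrow$ (i), throughout exploiting the bijection $AeA \mapsto \add eA$ between idempotent ideals and full subcategories of $\proj A$ recalled before the statement, which converts the chain \eqref{srrc} into $\proj A = \add e_0 A \supset \cdots \supset \add e_n A = 0$. Since the left-handed notions (left-strong, left rejective, reflective) are obtained from the right-handed ones by passing to the opposite algebra $A^{\op}$, it suffices to treat the right-handed case and the ``resp.'' assertions then follow formally. By Proposition \ref{lem1}, condition (i) is equivalent to \eqref{srrc} being a heredity chain in which every $H_i$ is projective as a right $A$-module, so I may freely use that each $H_i = Ae_iA$ is an idempotent ideal.

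The technical core is a single-step dictionary obtained by computing approximations as traces. For $P \in \proj A$ a minimal right $(\add e_{i+1}A)$-approximation $g\colon Q \to P$ exists, and a direct computation identifies its image with the trace $\tr_{e_{i+1}A}(P)$; for $P = A$ this image is exactly $H_{i+1} = Ae_{i+1}A$. The first key point is that $g$ is a monomorphism for every $P$ if and only if each such trace is again projective, which on the ideal side is the projectivity of $H_{i+1}$ as a right $A$-module. The second key point is that $\add e_{i+1}A$ is coreflective in $\add e_i A$ precisely when these approximations can be chosen as coreflections (the inclusion admits a right adjoint); existence of coreflections rests only on the idempotency of the ideals and carries no monicity. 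Comparing the two points, right rejectivity is exactly coreflectivity together with the monicity that encodes projectivity of $H_{i+1}$, while the semisimplicity half of the heredity condition, that $e_i\rad(A)e_i$ lie in $H_{i+1}$, corresponds to the quotient $\add e_i A / \add e_{i+1}A$ being semisimple --- which is what the word ``total'' records in Definition \ref{rejch}.

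With the dictionary in hand I would argue as follows. For (i) $\Rightarrow$ (ii): a right-strongly heredity chain has every $H_i$ projective and every layer semisimple, so by the two key points each inclusion $\add e_{i+1}A \subset \add e_i A$ is right rejective with semisimple quotient, giving a total right rejective chain. For (ii) $\Rightarrow$ (iii): a total right rejective chain is in particular coreflective, since rejective subcategories are coreflective by Definition \ref{refsub}; reading the monicity and semisimplicity backwards through the dictionary shows each $\overline{H_i} = H_i/H_{i+1}$ is a heredity ideal of $A/H_{i+1}$, so \eqref{srrc} is a heredity chain. For (iii) $\Rightarrow$ (i): here \eqref{srrc} is already a heredity chain, so it remains only to upgrade each $H_i$ to being projective as a right $A$-module; the coreflective hypothesis supplies a coreflection $g\colon Q \to A$ with image $H_i$, and the semisimplicity of the successive layers lets me prove inductively that $\ker g = 0$, whence $H_i \cong Q \in \add e_i A$ is projective. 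An induction on the chain length $n$, splitting off the top ideal $H_{n-1}$ and passing to $A/H_{n-1}$ with the truncated chain, organizes all three implications.

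The step I expect to be the main obstacle is the monicity half of the dictionary, that is, proving the coreflection $Q \to A$ with image $H_i$ is injective exactly when $H_i$ is projective, and in particular the upgrade in (iii) $\Rightarrow$ (i). The subtlety is that coreflectivity guarantees only a universal (minimal) right approximation, whose kernel can be nonzero; one must use the semisimplicity of the heredity layers to kill this kernel, and must do so relatively inside each $\add e_i A \simeq \proj(e_iAe_i)$ rather than in $\proj A$, keeping track of how traces over $A$ restrict to these smaller module categories. Managing this relative-versus-absolute projectivity bookkeeping along the chain, so that the single-step equivalences compose into the stated chain-level equivalence, is the part that requires the most care.
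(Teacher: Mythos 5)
Your overall skeleton (translate ideals to subcategories of $\proj A$, prove a cycle of implications, and handle (iii)~$\Rightarrow$~(i) by induction on $n$ splitting off the last ideal) matches the paper's, and your (i)~$\Leftrightarrow$~(ii) dictionary is essentially the paper's Proposition \ref{iy1} together with Lemma \ref{coss}. But there is a genuine error in your ``second key point'': you assert that ``existence of coreflections rests only on the idempotency of the ideals.'' This is false. A right $(\add eA)$-approximation always exists (functorial finiteness of $\add eA$), but a \emph{coreflection} requires the approximation $f\colon Y\to X$ to induce an isomorphism, not merely an epimorphism, of $\Hom(-,Y)\to\Hom(-,X)$ on $\add eA$; by Proposition \ref{iy3} this holds precisely when $Ae$ is projective as a right $(eAe)$-module, a nontrivial condition. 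If coreflectivity were automatic, condition (iii) would collapse to ``heredity chain'' and the theorem would assert that every quasi-hereditary algebra is right-strongly quasi-hereditary, which the paper's example $A_n$ ($n>2$, a block of a Schur algebra) refutes.

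The second, related gap is in (iii)~$\Rightarrow$~(i), which is where all the real work of Theorem \ref{thm2} lies. You claim ``the semisimplicity of the successive layers lets me prove inductively that $\ker g=0$,'' but the cosemisimplicity condition $(H_i/H_{i+1})J(A/H_{i+1})(H_i/H_{i+1})=0$ is not the input that kills the kernel. The paper's Lemma \ref{lem3} takes a projective cover $0\to K\to P\to H_i\to 0$ and kills $K$ in two steps: (a) applying $(-)e_i$ and using that $Ae_i$ is projective over $e_iAe_i$ (this is what coreflectivity actually buys, via Proposition \ref{iy3}) together with a Nakayama-type argument to get $Ke_i=0$, hence $KH_i=0$; and (b) applying $-\otimes_A(A/H_{n-1})$ and using that $H_i/H_{n-1}$ is projective over $A/H_{n-1}$ (supplied by induction) and that $\mathrm{Tor}_2^A(A/H_i,A/H_{n-1})=0$ because $H_{n-1}$ is a \emph{heredity} ideal, to get $K=KH_{n-1}\subseteq KH_i=0$. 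So the decisive inputs are the relative projectivity $Ae_i\in\proj(e_iAe_i)$ and the homological properties of heredity ideals (layer projectivity and Tor-vanishing), neither of which appears in your sketch; without an argument of this kind the upgrade from ``coreflective $+$ heredity chain'' to ``$H_i$ projective over $A$'' is not established.
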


We apply total right (resp.\ left) rejective chains to study right-strongly (resp.\ left-strongly) quasi-hereditary algebras.
We give the following result by combining \cite[Theorem 3.6]{I2} and Theorem \ref{thm1.2}.

\begin{theorem} [Theorem \ref{gl}]
Let $A$ be an artin algebra. 
If $\gl A \leq2$, then $A$ is a right-strongly $($resp.\ left-strongly$)$ quasi-hereditary algebra.
\end{theorem}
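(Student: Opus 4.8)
The plan is to reduce the statement, via Theorem~\ref{thm1.2}, to the construction of a single combinatorial object and then to feed in the hypothesis $\gl A \le 2$ through one clean homological input. By the equivalence (i)$\Leftrightarrow$(ii) of Theorem~\ref{thm1.2} it suffices to order the indecomposable projective right $A$-modules as $P_1,\dots,P_n$ so that the chain $\proj A = \add(P_1\oplus\cdots\oplus P_n) \supset \add(P_2\oplus\cdots\oplus P_n) \supset \cdots \supset 0$ is a total right rejective chain. Since the only nontrivial right $(\add(P_{i+1}\oplus\cdots\oplus P_n))$-approximation to be checked at the $i$-th step is that of the just-deleted module $P_i$ (any $P_j$ with $j>i$ lies in the smaller subcategory and is its own approximation), the whole problem comes down to arranging the ordering so that, for every $i$, the trace of $P_{i+1}\oplus\cdots\oplus P_n$ in $P_i$ is projective; indeed right rejectivity at the $i$-th step holds if and only if this trace is projective, in which case its inclusion into $P_i$ is the required monic right approximation. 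The left-strongly case will then follow formally by applying the right-strongly case to $A^{\op}$, using $\gl A^{\op}=\gl A\le 2$.

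The homological engine I would isolate is the following consequence of $\gl A \le 2$: the kernel of any morphism $f\colon P\to Q$ between projective $A$-modules is again projective. Indeed $\im f$ is a submodule of the projective module $Q$, hence a first syzygy of $Q/\im f$, so $\pd(\im f)\le \gl A-1\le 1$; feeding this into $0\to\ker f\to P\to\im f\to 0$ forces $\ker f$ to be projective. Consequently, for any $i$, the trace of $P_{i+1}\oplus\cdots\oplus P_n$ in $P_i$ — being the image of a map $(P_{i+1}\oplus\cdots\oplus P_n)^{m}\to P_i$ between projectives — always has projective dimension at most one. Thus $\gl A\le 2$ supplies, for \emph{every} ordering, exactly the bound $\pd\le 1$ on the relevant traces, and the entire difficulty is concentrated in upgrading this to genuine projectivity for a well-chosen ordering.

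To produce such an ordering I would argue by induction on $n$, following the strategy of \cite[Theorem 3.6]{I2}. The inductive step amounts to exhibiting one indecomposable projective $P_1$ whose deletion is right rejective in $\proj A$, that is, for which $\operatorname{tr}_{\add(P_2\oplus\cdots\oplus P_n)}(P_1)$ is projective; equivalently, by Theorem~\ref{thm1.2}, to splitting off a heredity ideal $J=A e A$ that is projective as a right $A$-module. One chooses $e$ to correspond to the vertices maximal in a suitable preorder on the indecomposable projectives, so that $eAe$ is semisimple and $J$ is heredity, and then uses the kernel statement above to see that the relevant trace is not merely of projective dimension one but actually projective. Because $A/J$ has strictly fewer simple modules and still satisfies $\gl(A/J)\le 2$ (heredity ideals do not raise the global dimension), the induction hypothesis applies to $A/J$; lifting the resulting chain and adjoining the deleted term yields the desired total right rejective chain of $\proj A$, whence $A$ is right-strongly quasi-hereditary by Theorem~\ref{thm1.2}.

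The main obstacle is precisely this upgrade from $\pd\le 1$ to projectivity together with the bookkeeping that keeps it intact along the induction: $\gl A\le 2$ alone guarantees only projective dimension one for the traces, so the correct indecomposable projective (equivalently, the correct maximal idempotent) must be selected at every stage, and one must verify that the right-projectivity of the heredity ideals is preserved when the chain of $A/J$ is lifted back to $A$. This is exactly the point at which the rejective-subcategory machinery of \cite[Theorem 3.6]{I2} is indispensable: the monicity condition built into a right rejective subcategory is the categorical shadow of the right-projectivity of the heredity ideals, and expressing the successive deletions as a single total right rejective chain is what guarantees that a suitable projective can be removed at each stage.
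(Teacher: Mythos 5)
Your reduction via Theorem~\ref{thm1.2} and your key homological input (for $\gl A\le 2$ the kernel of any map between projectives is projective) both match ingredients of the paper's argument, but the induction itself has two genuine gaps. First, the initial reduction is to the wrong condition: a \emph{total} right rejective chain requires each $\mathcal{C}_i=\add(P_{i+1}\oplus\cdots\oplus P_n)$ to be right rejective in the ambient category $\proj A$, so the trace of $\mathcal{C}_i$ in $P_j$ must be projective for \emph{every} $j\le i$ (equivalently, the whole ideal $H_i=A(f_{i+1}+\cdots+f_n)A$ is projective), not only the trace in the just-deleted $P_i$. Checking only $P_i$ at each stage yields a right rejective chain in the sense of Definition~\ref{rejch}(1), which is strictly weaker: one can compose the step-by-step approximations, but a monomorphism in $\mathcal{C}_{i-1}$ need not be a monomorphism in $\proj A$, and bridging exactly this gap is the content of steps (ii)--(iii) of the paper's proof. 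Second, your inductive step conflates the two ends of the chain: deleting $P_1$ means asking that $H_1=A(1-f_1)A$ be projective, whereas the ideal $J=AeA$ with $eAe$ semisimple that you split off is $H_{n-1}$, the \emph{last} term. If you induct on $A/J$, you must lift projectivity of $H_i/J$ over $A/J$ to projectivity of $H_i$ over $A$; the sequence $0\to J\to H_i\to H_i/J\to 0$ with $J$ projective only yields $\pd_A H_i\le 1$, and upgrading this to projectivity is precisely Lemma~\ref{lem3}, which needs the coreflectivity hypothesis $Ae_i\in\proj(e_iAe_i)$ that you never establish.

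The paper avoids both problems by a different induction and by a choice your ``suitable preorder'' leaves unspecified. By Lemma~\ref{sim} there is a simple module $S=fA/fJ(A)$ with $\pd S$ \emph{exactly} one; the paper deletes $fA$ first (so $fJ(A)$ is projective and lies in $\add eA$ with $e=1-f$, giving the first cosemisimple right rejective step) and then inducts on the corner algebra $eAe$ rather than on a quotient. The hypotheses $\gl A\le 2$ and $\pd S=1$ are then combined to show that any monomorphism in $\add eA$ is already a monomorphism in $\proj A$ (its kernel is projective and killed by $e$, hence a direct sum of copies of $S$, hence zero), which is exactly what makes the chain inherited from $eAe$ total. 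Without this choice and this monicity argument, the upgrade from $\pd\le 1$ to projectivity of the traces --- which you correctly identify as the main obstacle --- is not achieved.
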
 

An artin algebra which has a heredity chain such that it is a right-strongly heredity chain and a left-strongly heredity chain is called a \emph{strongly quasi-hereditary algebra}.
They have global dimension at most two \cite{R}, but algebras with global dimension at most two are not necessarily strongly quasi-hereditary.
Applying our results on rejective chains, we give the following characterization of Auslander algebras to be strongly quasi-hereditary.

\begin{theorem} [Theorem \ref{Aus}]
Let $A$ be a representation-finite artin algebra and $B$ the Auslander algebra of $A$.
Then $B$ is a strongly quasi-hereditary algebra if and only if $A$ is a Nakayama algebra $($see {\rm {\cite[\S 4.2]{ARS}}} for the definition of Nakayama algebras$)$.
\end{theorem}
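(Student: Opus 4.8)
The plan is to pass through the standard equivalence $\proj B\simeq\mod A$ and then invoke Theorem \ref{thm1.2}. Recall that if $M$ is an additive generator of $\mod A$ and $B=\End_A(M)$, then $\Hom_A(M,-)$ identifies $\add M=\mod A$ with $\proj B$, so the chains of full subcategories of $\proj B$ occurring in Theorem \ref{thm1.2} become chains of full subcategories of $\mod A$. By definition $B$ is strongly quasi-hereditary exactly when it admits a \emph{single} heredity chain that is both right-strongly and left-strongly; by Theorem \ref{thm1.2} this is equivalent to the existence of one chain $\mod A=\C_0\supset\C_1\supset\cdots\supset\C_n=0$ that is simultaneously a total right rejective and a total left rejective chain. (Note that $\gl B\le 2$, so by Theorem \ref{gl} each of the two properties holds for \emph{some} chain; the content of the theorem is that one chain can serve both.) Spelling this out, such a chain amounts to a total order on the finitely many indecomposable $A$-modules, deleted one at a time, in which $X$ is deleted before the members of $\C'(X):=\{Y:Y<X\}$; writing $\mathrm{tr}(X)=\sum_{Y<X}\sum_{f\colon Y\to X}\im f$ and $\mathrm{rej}(X)=\bigcap_{Y<X}\bigcap_{f\colon X\to Y}\ker f$, a right $\C'(X)$-approximation of $X$ has image $\mathrm{tr}(X)$ and a left one has cokernel $X/\mathrm{rej}(X)$, so the two rejective requirements at each step are $(R_X)$: $\mathrm{tr}(X)\in\add\C'(X)$ and $(L_X)$: $X/\mathrm{rej}(X)\in\add\C'(X)$.

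For the direction $A$ Nakayama $\Rightarrow B$ strongly quasi-hereditary, I would exhibit such an order explicitly by deleting the indecomposables in order of decreasing length. Every indecomposable $X$ is then uniserial, so its unique maximal submodule is $\rad X$ and its unique simple submodule is $\soc X$, both of strictly smaller length and hence still present in $\C'(X)$. A short computation with uniserial modules gives $\mathrm{tr}(X)=\rad X$ and $\mathrm{rej}(X)=\soc X$: any non-isomorphism from a later module into $X$ has image inside $\rad X$, and any map from $X$ to a later module annihilates $\soc X$. Thus $(R_X)$ and $(L_X)$ hold at every step with $\rad X,\,X/\soc X\in\add\C'(X)$, so the same chain is both total right and total left rejective and $B$ is strongly quasi-hereditary.

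The substantial direction is the converse. Assuming a chain as above exists, I would first extract two monotonicity rules. From $(L_Y)$: if $Y\hookrightarrow X$ is a monomorphism of indecomposables with $X\not\cong Y$, then $Y<X$, for otherwise $X\in\C'(Y)$ and the injection contributes a zero kernel to $\mathrm{rej}(Y)$, forcing $\mathrm{rej}(Y)=0$ and $Y/\mathrm{rej}(Y)=Y\notin\add\C'(Y)$. Dually, from $(R_W)$: if $X\twoheadrightarrow W$ is an epimorphism of indecomposables with $X\not\cong W$, then $W<X$. Using the first rule, if some indecomposable $X$ had non-simple top, then two distinct maximal submodules would sum to $X$, and the indecomposable summands of these submodules embed properly into $X$, hence lie in $\C'(X)$; then $\mathrm{tr}(X)=X$, contradicting $(R_X)$. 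So every indecomposable is local. Dually, using the second rule and $(L_X)$, a non-simple socle produces proper indecomposable quotients separating the distinct simple socle summands, all of which are $<X$, whence $\mathrm{rej}(X)=0$, contradicting $(L_X)$; so every indecomposable is colocal.

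Finally I would show that \emph{local and colocal for every indecomposable forces $A$ to be Nakayama}, by induction on length: if $X$ is indecomposable with simple socle then $\rad X$ cannot decompose, since its summands would contribute independent simples to $\soc X$; hence $\rad X$ is indecomposable, thus local, colocal and shorter, and so uniserial by induction. As $X$ is local with uniserial radical, its submodules form a chain and $X$ is uniserial. Therefore every indecomposable $A$-module is uniserial, i.e.\ $A$ is Nakayama. I expect the main obstacle to be the bookkeeping of the interaction of the two rejective conditions on one and the same chain — in particular justifying from the definitions that a total rejective chain deletes indecomposables one at a time and that $(R_X)$ and $(L_X)$ take the submodule/quotient form above — after which the structural deductions are short.
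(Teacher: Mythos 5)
Your reduction of the statement to the existence of a single chain of subcategories of $\mod A\simeq\proj B$ that is simultaneously a total right and a total left rejective chain (equivalently, a rejective chain) is exactly the paper's starting point (Theorem \ref{thm1} plus $\proj B\simeq\mod A$), and your forward direction is essentially the paper's: the paper peels off one projective--injective indecomposable at a time and inducts on the quotient algebra, while you order all indecomposables by decreasing length; both yield the same kind of chain, and your verification that $\mathrm{tr}(X)=\rad X$ and $\mathrm{rej}(X)=\soc X$ for uniserial $X$ is correct. Your converse, however, takes a genuinely different route. The paper argues by induction on the length of $A$: it identifies $\C_1=\mod(A/I)$ via Proposition \ref{bij}, gets $A/I$ Nakayama by induction, and lifts Nakayama-ness back to $A$ through Lemma \ref{lNak}, whose key point (proved with cosemisimplicity and almost split morphisms) is that the indecomposables outside a cosemisimple $\mod(A/I)$ are projective--injective with indecomposable radical. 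You instead extract a global structural statement --- every indecomposable is local and colocal --- and then prove separately that this forces every indecomposable to be uniserial. That last induction on length (simple socle forces $\rad X$ indecomposable, hence uniserial, hence $X$ uniserial) is correct and is a nice self-contained replacement for Lemma \ref{lNak}.

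There is, however, a real gap at the point you yourself flag, and it is not mere bookkeeping. A rejective chain need not remove indecomposables one at a time, and your Rules~1 and~2 in the strict form you need (``$Y$ is removed \emph{strictly} later than $X$'') fail for the argument you give if a proper submodule $Y$ of $X$ is removed at the same step as $X$: closure of the $\C_i$ under submodules only yields that $Y$ survives at least as long as $X$, whereas your ``non-simple top'' step needs all indecomposable summands of $M_1\oplus M_2$ to lie in $\C'(X)$, i.e.\ strictly later. Moreover a naive refinement to a one-at-a-time chain destroys the property that each layer is closed under submodules and factor modules, so the reduction is not free. The missing ingredient is the cosemisimplicity condition, which your converse never invokes: if $Y\hookrightarrow X$ is a proper monomorphism of indecomposables removed at the same step $i$, then it is a radical morphism of $\C_{i-1}$, hence by cosemisimplicity it factors through some $Z\in\C_i$; since $\C_i=\mod(A/I_i)$ is closed under submodules (Propositions \ref{cl} and \ref{bij}, using that each $\C_i$ is rejective in $\mod A$), this forces $Y\in\C_i$, a contradiction. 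With this strict version of Rules~1 and~2 in hand (and its dual), your local/colocal deductions and the endgame go through without any total order on the indecomposables. So the strategy is sound and genuinely different from the paper's, but as written the converse is incomplete until cosemisimplicity is brought in to justify the strict monotonicity.
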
 

Note that Theorem 1.4 can be deduced from a recent result {\cite[Theorem 3]{E}}, which is shown by a different method.

\section{Preliminaries}

\subsection*{Notation}
For background materials in representation theory of algebras, we refer to \cite{{ASS}, {ARS}}.
Let $A$ be an artin algebra. Let $J(A)$ be the Jacobson radical of $A$. 
We denote by $\gl A$ the global dimension of $A$.
We write $\mod A$ for the category of finitely generated right $A$-modules and $\proj A$ for the full subcategory of $\mod A$ consisting of the finitely generated projective $A$-modules.
For $M \in \mod A$, we denote by $\add M$ the full subcategory of $\mod A$ whose objects are direct summands of finite direct sums of copies of $M$.

We fix a complete set of representatives of isomorphism classes of simple $A$-modules $\{ S(i) \; | \; i \in I \}$.
For $i \in I$, we denote by $P(i)$ the projective cover of $S(i)$. 
For $X \in \mod A$, we write $[X:S(i)]$ for the composition multiplicity of $S(i)$. 
We denote by $\kk$ a field.

\subsection{Quasi-hereditary algebras and highest weight categories}
We start with recalling definitions of quasi-hereditary algebras and highest weight categories.

\begin{definition}[Cline--Parshall--Scott \cite{CPS}, Dlab--Ringel \cite{DR}]
Let $A$ be an artin algebra. 
 
\begin{itemize}
\item[(1)] A two-sided ideal $H$ of $A$ is called \emph{heredity} if it satisfies the following conditions:
\begin{enumerate}
\item[(a)] $H$ is an idempotent ideal (i.e.\ $H^2=H$), or equivalently, there exists an idempotent $e$ such that $H=AeA$ {\cite[Statement 6]{DR}};
\item[(b)] $H$ is projective as a right $A$-module;
\item[(c)] $HJ(A)H=0$.
\end{enumerate}

\item[(2)] A chain of idempotent ideals of $A$
\[
A=H_{0}>H_{1}>\cdots>H_{i}>H_{i+1}>\cdots>H_{n}=0
\]
is called a \emph{heredity chain} if $H_{i}/H_{i+1}$ is a heredity ideal of $A/H_{i+1}$ for $0 \leq i \leq n-1$.

\item[(3)] $A$ is called a \emph{quasi-hereditary algebra} if there exists a heredity chain of $A$.
\end{itemize}
\end{definition}

Quasi-hereditary algebras are strongly related to highest weight categories defined below.
In fact, an artin algebra $A$ is quasi-hereditary if and only if $\mod A$ is a highest weight category {\cite[Theorem 3.6]{CPS}}. 

Let $\leq$ be a partial order on the index set $I$ of simple $A$-modules.
For each $i \in I$, we denote by $\Delta (i)$ the maximal factor module of $P(i)$ whose composition factors have the form $S(j)$, for some $j \leq i$.
The module $\Delta(i)$ is called the {\it standard module} corresponding to $i$.
Let $\Delta:= \{ \Delta(i) \; | \; i \in I \}$ be the set of standard modules.
We denote by $\mathcal{F}(\Delta)$ the full subcategory of $\mod A$ whose objects are the modules which have a $\Delta$-filtration, namely $M \in \mathcal{F}(\Delta)$ if and only if there exists a chain of submodules 
\[
M=M_0 \supseteq M_1 \supseteq \cdots \supseteq M_l=0
\]
such that $M_i/M_{i+1}$ is isomorphic to a module in $\Delta$. 
For $M \in \mathcal{F}(\Delta)$, we denote by $(M: \Delta(i))$ the filtration multiplicity of $\Delta(i)$, which dose not depend on the choice of $\Delta$-filtrations ({cf.\ \cite[A.1 (7)]{D}}).

\begin{definition} [Cline--Parshall--Scott {\cite{CPS}}]
We say that a pair $(\mod A, \leq)$ is a {\it highest weight category} if there exists a short exact sequence 
\begin{equation*}
0 \to K(i) \to P(i) \to \Delta(i) \to 0  
\end{equation*}
for any $i \in I$ with the following properties:
\begin{enumerate}
\item [(a)] $K(i) \in \mathcal{F}(\Delta)$ for any $i \in I$;  
\item [(b)] if $(K(i):\Delta(j)) \not= 0$, then we have $i < j$.
\end{enumerate}
\end{definition}

For a highest weight category $(\mod A, \leq)$ and a refinement $\leq'$ of $\leq$, it is clear that $(\mod A, \leq')$ is also a highest weight category whose standard modules coincide with those of $(\mod A, \leq)$.  
Therefore, without loss of generality, one can assume that the partial order $\leq$ on $I$ is a total order.

To explain a connection between quasi-hereditary algebras and highest weight categories more explicitly, we introduce the following notion.

\begin{definition}[Uematsu--Yamagata {\cite{UY}}]
Let $A$ be an artin algebra.
A chain of idempotent ideals 
\[
A=H_{0}>H_{1}>\cdots>H_{n}=0
\] 
is called {\it maximal} if the length of the chain is the number of simple modules.
\end{definition}

Any heredity chain of an artin algebra can be refined to a maximal heredity chain {\cite[Proposition 1.3]{UY}}.

Let $A$ be an artin algebra with simple $A$-modules $\{ S(i) \; | \; i \in I \}$ and $e_i$ a primitive idempotent of $A$ corresponding to $S(i)$.
Then there is a bijection 
\[
\{ {\rm total \; orders\;  on\; }I \} \overset{1:1} \longleftrightarrow \{ {\rm maximal \; chains \; of \; idempotent \; ideals} \}
\] 
given by setting $H_j:=A(e_{i_{j+1}} + \cdots + e_{i_{n}})A$ and 
\begin{equation} \label{idemp}
(i_1 < i_2 < \cdots < i_j < \cdots < i_{n}) \mapsto (A=H_0> H_1> \cdots > H_j > \cdots > H_{n} ).
\end{equation}

\begin{proposition} [Cline--Parshall--Scott {\cite[\S 3]{CPS}}] \label{cps}
Let $A$ be an artin algebra and $\leq$ a total order on $I$. 

\begin{itemize}
\item[{\rm (1)}] A pair $(\mod A, \leq)$ is a highest weight category with standard modules $\{ \Delta(i_1), \ldots, \Delta(i_n)\}$ if and only if the corresponding maximal chain of idempotent ideals is a heredity chain.

\item[{\rm (2)}] If the condition in $(1)$ is satisfied, then we have $H_{j}/H_{j+1} \cong \Delta(i_j)^{m_j}$ as right $A$-modules for some positive integer $m_j$.
\end{itemize}
\end{proposition}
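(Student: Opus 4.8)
My plan is to establish part (1) by induction on the number $n$ of isomorphism classes of simple modules, splitting off at each stage the heredity ideal attached to the maximal index, and to read off part (2) from the same reduction. Write $e:=e_{i_n}$ for the idempotent of the top index $i_n$ and $H:=H_{n-1}=AeA$ for the smallest nonzero term of \eqref{idemp}. Two observations drive the induction. First, since $i_n$ is maximal, $\Delta(i_n)$ is the whole of $P(i_n)$, so $\Delta(i_n)=eA$ is projective; and for $k<n$ every composition factor $S(j)$ of $\Delta(i_k)$ has $j\le i_k<i_n$, whence $\Delta(i_k)e=0$, so $\Delta(i_k)$ is annihilated by $H$ and coincides with the standard $A/H$-module for $i_k$. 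Second, $A/H$ has simple modules $S(i_1),\dots,S(i_{n-1})$, and the images of $H_0,\dots,H_{n-1}$ form a chain of idempotent ideals of $A/H$ which is a heredity chain of $A/H$ if and only if \eqref{idemp} is a heredity chain of $A$ below $H$.

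For the implication that a heredity chain yields a highest weight category, I first note that $H=AeA$ is then a heredity ideal of $A$, so by the characterization of heredity ideals \cite{DR} it is projective as a right module with $eJ(A)e=0$; as $eA=P(i_n)$ is indecomposable, $H$ is a direct sum of copies of $eA=\Delta(i_n)$, which already records part (2) for this layer. For the top index the sequence $0\to K(i_n)\to P(i_n)\to\Delta(i_n)\to0$ has $K(i_n)=0$, so (a) and (b) hold trivially. For $k<n$ I use the trace submodule $P(i_k)H=\tr_{eA}(P(i_k))\in\add(eA)=\add\Delta(i_n)$, which is filtered by copies of $\Delta(i_n)$; since $\Delta(i_k)$ is killed by $H$, the projection $P(i_k)\to\Delta(i_k)$ factors through $\overline{P}(i_k):=P(i_k)/P(i_k)H$, and comparing kernels gives a short exact sequence $0\to P(i_k)H\to K(i_k)\to\overline{K}(i_k)\to0$, where $\overline{K}(i_k)$ is the kernel over $A/H$. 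The inductive hypothesis applied to the heredity chain of $A/H$ makes $(\mod A/H,\le)$ a highest weight category, so $\overline{K}(i_k)\in\F(\Delta)$ with layers $\Delta(j)$, $j>i_k$; as $i_n>i_k$ as well, $K(i_k)\in\F(\Delta)$ with all layers indexed by $j>i_k$, giving (a) and (b) for $A$.

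For the converse, I suppose $(\mod A,\le)$ is a highest weight category. Since $i_n$ is maximal, $K(i_n)$ has no permitted $\Delta$-layer, so $K(i_n)=0$ and $\Delta(i_n)=P(i_n)=eA$ is projective; using the standard facts that $\End\Delta(i_n)$ is a division ring and $[\Delta(i_n):S(i_n)]=1$ one gets $eJ(A)e=0$, and then $H=AeA$ is a heredity ideal of $A$ by \cite{DR}. To iterate I check that $A/H$ is again a highest weight category: reversing the previous kernel comparison, $\overline{K}(i_k)=K(i_k)/P(i_k)H$ is the quotient of the $\Delta$-filtered module $K(i_k)$ by its $\Delta(i_n)$-filtered submodule $P(i_k)H$, hence is filtered by the $\Delta(j)=\Delta^{A/H}(j)$ with $j>i_k$, which is precisely the highest weight axiom over $A/H$. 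By induction the truncation of \eqref{idemp} is a heredity chain of $A/H$, and adjoining $H$ shows \eqref{idemp} is a heredity chain of $A$.

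Statement (2) then falls out of the induction: at the $j$-th step $H_j/H_{j+1}$ is the heredity ideal of $A/H_{j+1}$ attached to the top surviving idempotent, so by the structural fact used above it is, as a right module, a direct sum of $m_j$ copies of the associated projective standard module, giving $H_j/H_{j+1}\cong\Delta(i_j)^{m_j}$ with $m_j>0$ because that idempotent is nonzero in $A/H_{j+1}$. The base case $n=1$ is immediate, since both conditions are then equivalent to $A$ being semisimple. I expect the main obstacle to be the two-way inductive compatibility in the middle of the argument, namely that the highest weight structure descends to $A/H$ and, conversely, lifts from it; this rests on the structural description of a heredity ideal as a direct sum of copies of a single projective standard module together with the compatibilities $\Delta^A(i_k)=\Delta^{A/H}(i_k)$ and $P(i_k)H\in\add\Delta(i_n)$, and once these are secured the remainder of the induction is routine.
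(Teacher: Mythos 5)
The paper does not actually prove Proposition \ref{cps}: it is quoted from Cline--Parshall--Scott, so there is no in-text argument to compare yours against. Your induction --- peeling off the top heredity ideal $H=Ae_{i_n}A$, using $\Delta(i_n)=P(i_n)=e_{i_n}A$, the identification $\Delta^A(i_k)=\Delta^{A/H}(i_k)$ for $k<n$, the exact sequence $0\to P(i_k)H\to K(i_k)\to \overline{K}(i_k)\to 0$ obtained by comparing kernels, and the fact that a heredity ideal $AeA$ lies in $\add eA$ and is therefore a direct sum of copies of the indecomposable projective $eA$ --- is the standard proof of this equivalence, and the individual steps are correct (the elided lemma that passing from $K(i_k)$ to $K(i_k)/K(i_k)H$ removes exactly the $\Delta(i_n)$-layers does hold, because any surjection onto the projective module $\Delta(i_n)=eA$ splits).

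One point should be made explicit, because it is the one load-bearing step that does not follow from the paper's stated definitions. In the converse direction you derive $eJ(A)e=0$ from ``the standard facts that $\End\Delta(i_n)$ is a division ring and $[\Delta(i_n):S(i_n)]=1$.'' Neither fact is a consequence of the paper's Definition of a highest weight category (conditions (a) and (b) on the kernels $K(i)$ alone): for a local non-semisimple algebra such as $\kk[x]/(x^2)$ with $n=1$ one has $\Delta(1)=P(1)$ and $K(1)=0$, so (a) and (b) hold, yet $A>0$ is not a heredity chain since $AJ(A)A\neq 0$. These facts must therefore be imported as part of the definition (as they are in Cline--Parshall--Scott and Dlab--Ringel); with them your converse, and your base case ``both conditions are equivalent to $A$ being semisimple,'' are correct, and without them the proposition is false as literally stated. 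Finally, a remark on part (2): your own derivation identifies $H_j/H_{j+1}$ with a direct sum of copies of the projective standard module attached to the top surviving index of $A/H_{j+1}$, which is $i_{j+1}$, not $i_j$; with the convention $H_j=A(e_{i_{j+1}}+\cdots+e_{i_n})A$ the formula should read $H_j/H_{j+1}\cong\Delta(i_{j+1})^{m_j}$ (the case $j=0$ makes the off-by-one in the quoted statement visible), so you have reproduced the statement's indexing rather than what your argument actually proves.
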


\subsection{Right-strongly (resp.\ left-strongly) quasi-hereditary algebras}
Now, we recall the following special class of quasi-hereditary algebras.

\begin{definition}[Ringel {\cite[\S 4]{R}}] \label{left}
Let $A$ be an artin algebra and $\leq$ a partial order on $I$.

\begin{itemize}
\item[(1)] We say that a pair $(A, \leq)$ (or simply $A$) is {\it right-strongly quasi-hereditary} if there exists a short exact sequence 
\begin{equation*}
0 \to K(i) \to P(i) \to \Delta(i) \to 0  
\end{equation*}
for any $i \in I$ with the following properties:
\begin{enumerate}
\item [(a)] $K(i) \in \mathcal{F}(\Delta)$ for all $i \in I$;  
\item [(b)] if $(K(i):\Delta(j)) \not= 0$, then we have $i < j$;
\item [(c)] $K(i)$ is a projective right $A$-module, or equivalently the right $A$-module $\Delta(i)$ has projective dimension at most one.
\end{enumerate}

\item[(2)] We say that a pair $(A, \leq)$ (or simply $A$) is {\it left-strongly quasi-hereditary} if $(A^{\op}, \leq)$ is right-strongly quasi-hereditary.
\end{itemize}
\end{definition}

Note that Definition \ref{left} is slightly different from Ringel's original definition (see \cite[\S 4]{R}).
We can easily check that these are equivalent to each other.
Indeed, his definition induces our conditions (a) and (b) because the pair $(\mod A,\leq)$ is a highest weight category by \cite[\S 4 Proposition]{R}.
Moreover, his condition (a) clearly gives our condition (c).
Conversely, his condition (a) follows from our condition (a) and (c). Furthermore, our conditions (a) and (b) induces his condition (b).

As before, for a right-strongly quasi-hereditary algebra $(A, \leq)$ and a refinement $\leq'$ of $\leq$, it is clear that $(A, \leq')$ is also a right-strongly quasi-hereditary algebra whose standard modules coincide with those of $(A, \leq)$.
Therefore, without loss of generality, one can assume that the partial order $\leq$ on $I$ is a total order.

In this paper, for a quiver $Q$ and arrows $\alpha: x \to y$ and $\beta: y \to z$ in $Q$, we denote by $\alpha \beta$ the composition.

\begin{example}
We assume that a natural number $n$ is at least two. Let $A_n$ be the $\kk$-algebra defined by the quiver
\[
\xymatrix@C=25pt@R=15pt{1\ar@<2pt>[r]^{\alpha_1} & 2\ar@<2pt>[l]^{\beta_1} \ar@<2pt>[r]^{\alpha_2} & \cdots \ar@<2pt>[l]^{\beta_2} \ar@<2pt>[r]^{\alpha_{i-1}} & i \ar@<2pt>[l]^{\beta_{i-1}} \ar@<2pt>[r]^{\alpha_{i}} & i+1  \ar@<2pt>[l]^{\beta_{i}} \ar@<2pt>[r]^{\alpha_{i+1}} & \cdots \ar@<2pt>[l]^{\beta_{i+1}} \ar@<2pt>[r]^{\alpha_{n-1}}& n \ar@<2pt>[l]^{\beta_{n-1}}
 }
\]
with relations $\alpha_{i-1} \alpha_i$, $\beta_i \beta_{i-1}$, $\beta_{i-1} \alpha_{i-1} - \alpha_i\beta_i$ for $2 \leq i \leq n-1$ and $\beta_{n-1} \alpha_{n-1}.$ 
The algebra $A_n$ is Morita equivalent to a block of a Schur algebra (see \cite{{DoRe}, {Er}}).

If $n=2$, then the indecomposable projective modules $P(i)$ have the following shape: 
\[
\begin{xy} 
(0,0)*{1}="1", +/d0.4cm/*{2} ,+/d0.4cm/*{1}, "1"+/r1cm/*{2}, +/d0.4cm/="1"*{1}, 
\end{xy}
\]
For the total order $\{1<2\}$, we have $\Delta(1)=S(1)$ and $\Delta(2)=P(2)$, and hence $A_2$ is right-strongly quasi-hereditary.

If $n>2$, then the indecomposable projective modules $P(i)$ have the following shape: 
\[
\begin{xy}
(0,0)*{1}="1", (0,-4)*{2} ,(0,-8)*{1}, "1"+(15.5,0)*{2}="2", "2"+(-5.5,-4)*{1}, "2"+(5.5, -4)*{3}="3", "2"+(0,-8)*{2}, "3"+(10,0)*{\cdots}="c"
,"c"+(15.5,4)*{i}="i", "i"+(-5.5,-4)*{i-1}, "i"+(5.5,-4)*{i+1}="j", "i"+(0,-8)*{i}, "j"+(10,0)*{\cdots}="d",  "d"+(10,4)*{n}="l", "l"+(0,-4)*{n-1}, 
\end{xy}
\]
Thus $A_n$ is quasi-hereditary with respect to the total order $\{1<2< \cdots < n \}$.
However $A_n$ is not right-strongly quasi-hereditary with respect to any order. 

\end{example}

It is well-known that a pair $(A, \leq)$ is quasi-hereditary if and only if so is $(A^{\op}, \leq)$ (see {\cite[Lemma 3.4]{CPS}} and {\cite[Statement 9]{DR}}).
However even if $(A, \leq)$ is right-strongly quasi-hereditary, it dose not necessarily hold that $(A, \leq)$ is left-strongly quasi-hereditary.
Moreover there is an example of a left-strongly quasi-hereditary algebra $A$ which is not right-strongly quasi-hereditary for any order on $I$ (see \cite[A2 (1)]{R}). 
  
\begin{example} \label{eg}
Let $B$ be the $\kk$-algebra defined by the quiver
\[
\xymatrix@=15pt{ & 1 \ar[rd]^{\gamma} & \\
\ar[ru]^{\alpha} 2 && 3 \ar[ll]^{\beta}
 }
 \]
with relations $\alpha \gamma$, $\beta \alpha$. 
Then the indecomposable projective $B$-modules $P(i)$ have the following shape: 
\[
\begin{xy}
(0,0)*{1}="1", (0,-4)*{3} ,(0,-8)*{2}, "1"+(10,0)*{2}="2", "2"+(0,-4)*{1}, "2"+(10,0)*{3}="3", "3"+(0,-4)*{2} 
\end{xy}
\]
For the total order $\{1<2<3\}$, we have $\Delta(1)=S(1)$ and $\Delta(i)=P(i)$ for $i=2, 3$, and hence $B$ is right-strongly quasi-hereditary. 
On the other hand, the indecomposable projective $B^{\op}$-modules have the following shape:
\[
\begin{xy}
(0,0)*{1^{\op}}="1", (0,-4)*{2^{\op}}, "1"+(10,0)*{2^{\op}}="2", "2"+(0,-4)*{3^{\op}}, "2"+(0,-8)*{1^{\op}}, "2"+(10,0)*{3^{\op}}="3", "3"+(0,-4)*{1^{\op}} 
\end{xy}
\]
For the total order $\{1<2<3\}$, we have $\Delta^{\op}(i)=S^{\op}(i)$ for $i=1, 2$ and $\Delta^{\op}(3)=P^{\op}(3)$, and hence $B$ is not left-strongly quasi-hereditary.
However, for the total order $\{ 2<1<3\}$, $B$ is not right-strongly quasi-hereditary but $B$ is left-strongly quasi-hereditary.
\end{example}

\section{Characterizations of right-strongly quasi-hereditary algebras}

\subsection{Right-strongly heredity chains}

In this subsection, we give a characterization of right-strongly (resp.\ left-strongly) quasi-hereditary algebras in terms of heredity chains.

\begin{definition} \label{sthered}
Let $A$ be an artin algebra and 
\begin{equation} \label{hc}
A=H_{0}>H_{1}> \cdots > H_i > H_{i+1}> \cdots >H_{n}=0
\end{equation}
a chain of idempotent ideals.

\begin{itemize}
\item[(1)] We call \eqref{hc} a {\it right-strongly $($resp.\ left-strongly$)$ heredity chain} if the following conditions hold for any $0 \leq i \leq n-1$:
\begin{enumerate}
\item[(a)] $H_i$ is projective as a right (resp.\ left) $A$-module;
\item[(b)] $(H_i/H_{i+1}) J(A/H_{i+1}) (H_i/H_{i+1}) =0$.
\end{enumerate}

\item[(2)] We call \eqref{hc} a {\it strongly heredity chain} if the following conditions hold for any $0 \leq i \leq n-1$:
\begin{enumerate}
\item[(a)] $H_i$ is projective as a right $A$-module and as a left $A$-module;
\item[(b)] $(H_i/H_{i+1}) J(A/H_{i+1}) (H_i/H_{i+1}) =0$.
\end{enumerate}
\end{itemize}
\end{definition} 
 
\begin{proposition} \label{rshchc}
Any right-strongly $($resp.\ left-strongly$)$ heredity chain of $A$ is a heredity chain.
\end{proposition}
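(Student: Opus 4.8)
The goal is to show that \eqref{hc} satisfies the definition of a heredity chain, i.e.\ that for each $0 \le i \le n-1$ the quotient $\bar{H}_i := H_i/H_{i+1}$ is a heredity ideal of the factor algebra $B_i := A/H_{i+1}$. I treat the right-strongly case in detail; the left-strongly case is obtained at the end by passing to $A^{\op}$. Fixing $i$, I would check the three defining conditions (a)--(c) of a heredity ideal for $\bar{H}_i \subseteq B_i$. Two of them are essentially free: the condition $\bar{H}_i\,J(B_i)\,\bar{H}_i = 0$ is exactly condition (b) of Definition \ref{sthered}(1) since $J(B_i) = J(A/H_{i+1})$, and the idempotency of $\bar{H}_i$ follows from $H_i^2 = H_i$ together with $H_{i+1} \subseteq H_i$, which gives $\bar{H}_i^2 = (H_i^2 + H_{i+1})/H_{i+1} = \bar{H}_i$.

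The substantive point is the projectivity of $\bar{H}_i$ as a right $B_i$-module, and my plan is to realise $\bar{H}_i$ as a base change of the projective module $H_i$. Because $H_{i+1}$ is an idempotent ideal contained in $H_i$, one has $H_i H_{i+1} = H_{i+1}$: the inclusion $\supseteq$ comes from $H_{i+1} = H_{i+1}^2 \subseteq H_i H_{i+1}$, and $\subseteq$ from $H_{i+1}$ being a right ideal. Consequently
\[
\bar{H}_i = H_i/H_{i+1} = H_i/H_i H_{i+1} \cong H_i \otimes_A (A/H_{i+1}) = H_i \otimes_A B_i .
\]
Since $H_i$ is projective as a right $A$-module by condition (a) of Definition \ref{sthered}(1), and the additive functor $-\otimes_A B_i$ carries $A$ to $B_i$ and hence sends $\proj A$ into $\proj B_i$, the module $\bar{H}_i \cong H_i \otimes_A B_i$ is projective over $B_i$. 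This establishes condition (b), so $\bar{H}_i$ is a heredity ideal of $B_i$ and \eqref{hc} is a heredity chain.

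For a left-strongly heredity chain of $A$ the ideals $H_i$ are projective as left $A$-modules, so the same chain of ideals, regarded in $A^{\op}$, is a right-strongly heredity chain of $A^{\op}$ (the condition in Definition \ref{sthered}(1)(b) is invariant under passing to the opposite algebra). By the case already handled it is a heredity chain of $A^{\op}$, and by the left--right symmetry of heredity chains (the opposite-algebra symmetry of quasi-heredity recalled above, \cite{DR}) it is a heredity chain of $A$. The only step requiring an idea is the projectivity condition (b); once $\bar{H}_i$ is identified with $H_i \otimes_A B_i$, preservation of projectivity under base change closes the argument with no further computation.
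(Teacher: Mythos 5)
Your proof is correct and follows essentially the same route as the paper: the key step in both is to identify $H_i/H_{i+1}$ with the base change $H_i \otimes_A (A/H_{i+1})$ of the projective right $A$-module $H_i$, so that projectivity over $A/H_{i+1}$ is immediate; you merely spell out the remaining heredity-ideal axioms and the identification $H_iH_{i+1}=H_{i+1}$, which the paper leaves implicit. (One cosmetic slip: the inclusion $H_iH_{i+1}\subseteq H_{i+1}$ uses that $H_{i+1}$ is a \emph{left} ideal, not a right one, but as $H_{i+1}$ is two-sided this is harmless.)
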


\begin{proof}
Let \eqref{hc} be a right-strongly heredity chain.
It is enough to show that $H_i/H_{i+1}$ is projective as a right $(A/H_{i+1})$-module for any $0 \leq i \leq n-1$.
Since \eqref{hc} is a right-strongly heredity chain, we have that $H_i$ is projective as a right $A$-module for any $0 \leq i \leq n-1$.
Hence $H_i \otimes_A (A/H_{i+1})=H_i/H_{i+1}$ is projective as a right $(A/H_{i+1})$-module for any $0 \leq i \leq n-1$.
\end{proof}
 
\begin{example} 
Let $A$ be an artin algebra.
Then $A$ is hereditary if and only if any chain of idempotent ideals of $A$ is a strongly heredity chain. 
\end{example}

\begin{proof}
The ``only if'' part is clear.
By {\cite[Theorem 1]{DR}}, $A$ is hereditary if and only if any chain of idempotent ideals of $A$ is a heredity chain.
Therefore the ``if'' part follows.
\end{proof}

\begin{example}
Any heredity chain of length at most two is clearly a right-strongly (resp.\ left-strongly) heredity chain. 
\end{example}

\begin{example}
Let $A$ be the Auslander algebra of the truncated polynomial algebra $\kk[x]/(x^n)$.
Then $A$ is given by the quiver 
\[
\xymatrix@C=25pt@R=15pt{1\ar@<2pt>[r]^{\alpha_1} & 2\ar@<2pt>[l]^{\beta_1} \ar@<2pt>[r]^{\alpha_2} & \cdots \ar@<2pt>[l]^{\beta_2} \ar@<2pt>[r]^{\alpha_{i-1}} & i \ar@<2pt>[l]^{\beta_{i-1}} \ar@<2pt>[r]^{\alpha_{i}} & i+1  \ar@<2pt>[l]^{\beta_{i}} \ar@<2pt>[r]^{\alpha_{i+1}} & \cdots \ar@<2pt>[l]^{\beta_{i+1}} \ar@<2pt>[r]^{\alpha_{n-1}}& n \ar@<2pt>[l]^{\beta_{n-1}}
 }
\]
with relations $\beta_i \alpha_i-  \alpha_{i+1} \beta_{i+1}$ ($1 \leq i \leq n-2$), $\beta_{n-1} \alpha_{n-1}$. Then 
\begin{equation*}
A > A(e_2 + \cdots +e_n)A > \cdots  >Ae_nA > 0
\end{equation*}
is a strongly heredity chain of $A$.
This example can be explained by Theorem \ref{Aus}.
\end{example}

We prepare the following easy observation.

\begin{lemma} \label{lem0}
Let $A$ be an artin algebra and 
\begin{equation*}
A=H_0> H_1 > \cdots > H_i > \cdots > H_{n}=0
\end{equation*}
a chain of two-sided ideals.
Then the following conditions are equivalent:

\begin{itemize}
\item[{\rm (i)}] $H_i$ is projective as a right $($resp.\ left$)$ $A$-module for $0 \leq i \leq n-1$.

\item[{\rm (ii)}] The projective dimension of $H_i/H_{i+1}$ as a right $($resp.\ left$)$ $A$-module is at most one for $0 \leq i \leq n-1$.
\end{itemize}
\end{lemma}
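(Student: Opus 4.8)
The plan is to prove the equivalence of (i) and (ii) by exploiting the short exact sequence of $A$-bimodules (hence in particular of right, or left, $A$-modules)
\begin{equation*}
0 \to H_{i+1} \to H_i \to H_i/H_{i+1} \to 0
\end{equation*}
for each $0 \leq i \leq n-1$, together with the terminal condition $H_n = 0$. I would argue by descending induction on $i$, starting from the top of the chain where $H_{n-1}/H_n = H_{n-1}$, so the two conditions agree trivially there, and then propagate the equivalence downward along the chain. Throughout I only treat the right-module case; the left case is identical after replacing $A$ by $A^{\op}$.

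For the implication (i) $\Rightarrow$ (ii), suppose each $H_i$ is projective as a right $A$-module. Then in the displayed short exact sequence both $H_i$ and $H_{i+1}$ are projective, so the sequence is a projective presentation of $H_i/H_{i+1}$ of length at most one, giving $\pd_A(H_i/H_{i+1}) \leq 1$ immediately. For the converse (ii) $\Rightarrow$ (i), I would run the descending induction: the base case $H_{n-1} = H_{n-1}/H_n$ has projective dimension at most one by hypothesis, but I must upgrade ``projective dimension at most one'' to ``projective''. The key step here is that $H_{n-1}$ is an idempotent ideal, so it has the form $e_{n-1}A$-type summand behaviour; more directly, an idempotent ideal that is a submodule arising as $H_{n-1} = A e A$ with $H_n = 0$ is generated by an idempotent and one can use that a module of projective dimension at most one which is itself a two-sided idempotent ideal must already be projective. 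The cleanest route is to observe that $H_{i+1}$ being projective (inductive hypothesis) combined with $\pd_A(H_i/H_{i+1}) \leq 1$ forces $H_i$ to be projective, since in the short exact sequence a projective $H_{i+1}$ and a length-one presentation of the quotient together bound $\pd_A H_i \leq \max\{\pd_A H_{i+1}, \pd_A(H_i/H_{i+1}) - 1\}$; as $\pd_A H_{i+1}=0$ and $\pd_A(H_i/H_{i+1})\le 1$, the standard dimension-shifting inequality for a short exact sequence yields $\pd_A H_i = 0$, i.e.\ $H_i$ is projective.

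I expect the main obstacle to be handling the two cases of the dimension inequality correctly and, in particular, pinning down the base step and the dimension-shift so that ``projective dimension at most one of the quotient plus projective submodule'' genuinely yields ``projective extension''. The subtle point is that the inequality $\pd_A H_i \le \max\{\pd_A H_{i+1}, \pd_A(H_i/H_{i+1})-1\}$ is the correct one precisely because $H_{i+1}$ sits as the \emph{sub}module and the quotient has projective dimension at most one; when $\pd_A(H_i/H_{i+1}) = 1$ the shift gives $\pd_A H_i \le \max\{0,0\}=0$, and when the quotient is already projective the sequence splits and $H_i$ is a summand of $H_{i+1}\oplus(H_i/H_{i+1})$, again projective. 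Both subcases therefore close the induction. This is genuinely the easy observation the lemma advertises, so apart from stating the dimension-shifting inequality carefully there is no serious difficulty; the proof should be only a few lines.
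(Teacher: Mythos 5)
Your implication (i) $\Rightarrow$ (ii) is fine and is exactly the paper's argument. The converse, however, contains a genuine error: the dimension-shifting inequality you invoke is misstated. For a short exact sequence $0 \to X \to Y \to Z \to 0$ the three standard estimates are $\pd Y \le \max\{\pd X, \pd Z\}$, $\pd Z \le \max\{\pd Y, \pd X + 1\}$, and $\pd X \le \max\{\pd Y, \pd Z - 1\}$; the ``$-1$'' shift applies to the \emph{submodule} $X$, not to the middle term $Y$ as you claim. Your asserted implication ``$H_{i+1}$ projective and $\pd_A(H_i/H_{i+1}) \le 1$ force $H_i$ projective'' is simply false: the split sequence $0 \to P \to P \oplus Z \to Z \to 0$ with $P$ projective and $\pd Z = 1$ is a counterexample. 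Your base case is also broken for the same underlying reason: the hypothesis only gives $\pd_A H_{n-1} \le 1$, and the attempted rescue via idempotency of $H_{n-1}$ appeals to a hypothesis the lemma does not make (it is stated for an arbitrary chain of two-sided ideals) and to a fact (``an idempotent ideal of projective dimension at most one is projective'') that you do not prove.

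The fix is to run the induction in the opposite direction along the chain, which is what the paper does. Start from $H_0 = A$, which is projective for free. Given that $H_i$ is projective, the sequence $0 \to H_{i+1} \to H_i \to H_i/H_{i+1} \to 0$ exhibits $H_{i+1}$ as the kernel of a surjection from a projective module onto $H_i/H_{i+1}$; since $\pd_A(H_i/H_{i+1}) \le 1$, Schanuel's lemma (equivalently, the correct estimate $\pd H_{i+1} \le \max\{\pd H_i, \pd(H_i/H_{i+1}) - 1\} = 0$) shows that $H_{i+1}$ is projective. Iterating from $i = 0$ up to $i = n-2$ gives (i). Note that here the inductive conclusion is about the submodule $H_{i+1}$, where the ``$-1$'' shift genuinely applies, and the base case $H_0 = A$ needs no hypothesis at all.
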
 
 
\begin{proof}
(i) $\Rightarrow$ (ii): This is clear from the short exact sequence $0 \to H_{i+1} \to H_i \to H_i/H_{i+1} \to 0$.

(ii) $\Rightarrow$ (i): 
Since $0 \to H_{1} \to H_{0} \to H_{0}/H_{1} \to 0$ is a short exact sequence such that $H_{0}=A$ is a projective $A$-module, $H_{1}$ is also projective as a right $A$-module.
Thus we obtain the assertion inductively.  
\end{proof}
 
Now, we are ready to prove the following main observation in this subsection.

\begin{proposition} \label{lem1}
Let $A$ be an artin algebra, $\leq$ a total order on $I$ and 
\begin{equation} \label{rsh}
 A=H_0 > H_1 > \cdots > H_{n}=0
\end{equation}
a maximal chain of idempotent ideals corresponding to $\leq$ by \eqref{idemp}. 
Then $(A, \leq)$ is a right-strongly $($resp.\ left-strongly$)$ quasi-hereditary algebra if and only if \eqref{rsh} is a right-strongly $($resp.\ left-strongly$)$ heredity chain.
\end{proposition}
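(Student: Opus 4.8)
The plan is to connect Ringel's module-theoretic definition of right-strongly quasi-hereditary (Definition \ref{left}) with the ideal-theoretic Definition \ref{sthered} by working term-by-term along the maximal chain \eqref{rsh}, using the dictionary between idempotent ideals and standard modules supplied by Proposition \ref{cps}. Throughout, $\leq$ is the total order $i_1 < i_2 < \cdots < i_n$, and $H_j = A(e_{i_{j+1}} + \cdots + e_{i_n})A$ as in \eqref{idemp}.

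First I would observe that both directions share a common backbone: condition (b) of Definition \ref{sthered}, namely $(H_i/H_{i+1}) J(A/H_{i+1})(H_i/H_{i+1}) = 0$, is exactly the Cline--Parshall--Scott heredity condition (c) for the ideal $H_i/H_{i+1}$ of $A/H_{i+1}$, so the only genuinely new content in a \emph{right-strongly} heredity chain beyond an ordinary heredity chain is the global projectivity condition (a), that each $H_i$ is projective as a right $A$-module. The strategy therefore splits into two tasks: (1) show that \eqref{rsh} being a heredity chain is equivalent to $(A,\leq)$ being quasi-hereditary with standard modules $\Delta(i_j)$, which is precisely Proposition \ref{cps}; and (2) show that the projectivity condition on the $H_i$ matches Ringel's condition (c), that each $\Delta(i)$ has projective dimension at most one.

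For task (2), assuming \eqref{rsh} is a heredity chain, Proposition \ref{cps}(2) gives $H_j/H_{j+1} \cong \Delta(i_j)^{m_j}$ as right $A$-modules. Hence $\pd_A \Delta(i_j) \leq 1$ for every $j$ if and only if $\pd_A(H_j/H_{j+1}) \leq 1$ for every $j$, and by Lemma \ref{lem0} this is equivalent to each $H_j$ being projective as a right $A$-module. So for the ``if'' direction I would start from a right-strongly heredity chain: by Proposition \ref{rshchc} it is a heredity chain, so by Proposition \ref{cps}(1) the pair $(\mod A,\leq)$ is a highest weight category (giving Ringel's conditions (a) and (b) via the discussion after Definition \ref{left}), and condition (a) of Definition \ref{sthered} combined with Lemma \ref{lem0} and the isomorphism $H_j/H_{j+1}\cong\Delta(i_j)^{m_j}$ yields $\pd_A\Delta(i_j)\leq 1$, i.e.\ Ringel's condition (c). For the ``only if'' direction, starting from $(A,\leq)$ right-strongly quasi-hereditary, the highest weight structure makes the maximal chain a heredity chain by Proposition \ref{cps}(1) (so Definition \ref{sthered}(b) holds), and condition (c) together with the same multiplicity isomorphism and Lemma \ref{lem0} forces each $H_i$ to be projective, giving Definition \ref{sthered}(a). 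The left-strongly case follows by passing to $A^{\op}$, since both Definition \ref{left} and Definition \ref{sthered} are set up so that the left-strongly notion is the $A^{\op}$-version of the right-strongly one.

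The main obstacle I anticipate is bookkeeping rather than conceptual: one must be careful that the maximal chain corresponding to $\leq$ under \eqref{idemp} produces \emph{exactly} the standard modules $\Delta(i_j)$ indexed compatibly with the order, so that Ringel's per-vertex condition (c) on $\Delta(i)$ lines up with the per-step projectivity of $H_i$ rather than getting shifted by one or reindexed. The clean statement $H_j/H_{j+1}\cong\Delta(i_j)^{m_j}$ from Proposition \ref{cps}(2) is what removes this difficulty, so the proof reduces to invoking that identification, Lemma \ref{lem0}, and Proposition \ref{rshchc}, with no further computation needed.
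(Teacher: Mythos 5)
Your proposal is correct and follows essentially the same route as the paper's proof: both directions first reduce to the chain being a heredity chain via Proposition \ref{cps}(1) and Proposition \ref{rshchc}, then use the isomorphism $H_j/H_{j+1}\cong\Delta(i_j)^{m_j}$ from Proposition \ref{cps}(2) together with Lemma \ref{lem0} to identify the projectivity of each $H_j$ with $\pd_A\Delta(i_j)\leq 1$. No substantive differences.
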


\begin{proof}  
Both conditions imply that \eqref{rsh} is a heredity chain by Proposition \ref{cps} (1) and Proposition \ref{rshchc}. 
Moreover we have an isomorphism  
\begin{equation} \label{std}
H_j/H_{j+1} \cong \Delta(i_j)^{m_j}
\end{equation}
as right $A$-modules for some positive integer $m_j$ by Proposition \ref{cps} (2).

By \eqref{std}, the pair $(A, \leq)$ is right-strongly quasi-hereditary if and only if the projective dimension of $H_j/H_{j+1}$ as a right $A$-module is at most one for any $0 \leq j \leq n-1$.
By Lemma \ref{lem0}, this is equivalent to that $H_j$ is projective as a right $A$-module for any $0 \leq j \leq n-1$.
Hence \eqref{rsh} is a right-strongly heredity chain.
\end{proof}

Throughout this paper, we frequently use the following basic observations.
 
\begin{lemma} \label{idempi}
Let $A$ be an artin algebra and $e$ an idempotent of $A$.
Then we have the following statements.

\begin{itemize}
\item[{\rm (1)}] If $AeA$ is projective as a right $A$-module, then $AeA \in \add eA$.

\item[{\rm (2)}] If $Ae$ is projective as a right $(eAe)$-module, then the functor $\Hom_A(eA, -) : \mod A \to \mod eAe$ preserves projective modules.
In particular, $\gl eAe \leq \gl A$.

\item[{\rm (3)}] If $AeA$ is projective as a right $A$-module, then $Ae$ is a projective right $(eAe)$-module.
\end{itemize}
\end{lemma}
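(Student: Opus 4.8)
The plan is to handle the three statements in turn, using throughout the \emph{Schur functor} $F := \Hom_A(eA,-)\colon \mod A \to \mod eAe$. Since $eA$ is a direct summand of $A_A$ it is projective, so $F$ is exact; moreover evaluation at $e$ gives a natural isomorphism $\Hom_A(eA,M) \cong Me$ of right $eAe$-modules, under which $F(A) = Ae$ and $F(eA) = eAe$. For (1), I would first note that $A$ is noetherian, so the two-sided ideal $AeA$ is finitely generated as a right $A$-module, and that it is generated by elements of the form $ae$ because $AeA = \sum_{a \in A} aeA$. Choosing finitely many such generators $a_1 e, \dots, a_m e$ produces a surjection $(eA)^m \to AeA$ of right $A$-modules, sending the $j$-th summand onto $a_j eA$. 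If $AeA$ is projective this surjection splits, exhibiting $AeA$ as a direct summand of $(eA)^m$, i.e.\ $AeA \in \add eA$.

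For (2), the identification $F(A) = Ae$ together with additivity shows that the hypothesis $Ae \in \proj eAe$ forces $F(\proj A) \subseteq \add(Ae) \subseteq \proj eAe$, so $F$ preserves projectives. To obtain $\gl eAe \le \gl A$ I would pass through the left adjoint $-\otimes_{eAe} eA$ of $F$: given $N \in \mod eAe$, set $M := N \otimes_{eAe} eA \in \mod A$, which is finitely generated and satisfies $F(M) = Me \cong N \otimes_{eAe} eAe \cong N$. Applying the exact, projective-preserving functor $F$ to a projective resolution of $M$ of length at most $\gl A$ then yields a projective resolution of $N$ over $eAe$ of the same length, so $\pd_{eAe} N \le \gl A$.

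For (3), I would combine (1) with the elementary identity $Ae = AeAe$, valid for any idempotent (from $Ae \subseteq AeA$ one gets $Ae = Ae\,e \subseteq AeAe \subseteq Ae$), together with the computation $F(AeA) = (AeA)e = AeAe = Ae$. By (1) we have $AeA \in \add eA$, and applying the additive functor $F$ gives $F(AeA) \in \add F(eA) = \add eAe \subseteq \proj eAe$; hence $Ae \cong F(AeA)$ is projective over $eAe$.

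The step I expect to be the main obstacle is the global-dimension half of (2). The other assertions follow almost formally once the isomorphism $\Hom_A(eA,M)\cong Me$ and the splitting in (1) are in place, but bounding the projective dimension of an \emph{arbitrary} $eAe$-module requires lifting it to $\mod A$ via $N \mapsto N\otimes_{eAe} eA$ and verifying that the adjunction unit $N \to F(N\otimes_{eAe}eA)$ is an isomorphism, which is where the interplay between $F$ and its left adjoint does the essential work.
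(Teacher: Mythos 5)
Your proposal is correct and follows essentially the same route as the paper: both arguments produce a split surjection $(eA)^m \twoheadrightarrow AeA$ for (1), identify $\Hom_A(eA,P)\cong Pe$ as a summand of copies of $Ae$ for (2), and deduce (3) from (1) via $Ae=AeAe=\Hom_A(eA,AeA)$. The only difference is cosmetic (the paper builds the surjection in (1) from $Ae\otimes_{eAe}eA\twoheadrightarrow AeA$ rather than from chosen generators) and you spell out the global-dimension inequality via the left adjoint $-\otimes_{eAe}eA$, a standard step the paper leaves implicit.
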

 
 \begin{proof}
 (1) Take an epimorphism $f : (eAe)^l \twoheadrightarrow Ae$ in $\mod (eAe)$.
 Composing $f \otimes_{eAe} eA : (eA)^l \twoheadrightarrow Ae\otimes_{eAe} eA$ with the multiplication map $Ae \otimes_{eAe} eA \twoheadrightarrow AeA$, we have an epimorphism $(eA)^l \twoheadrightarrow AeA$ of right $A$-modules.
 
 (2) For any $P \in \proj A$, we have that $\Hom_A(eA, P) =Pe$ is a direct summand of a finite direct sum of copies of $\Hom_A(eA,A)=Ae$.
 Hence the assertion holds.
 
 (3) Since $AeA$ is a projective $A$-module, it follows from (1) that $A e A\in \add eA$.
Hence we obtain that $A e = A e A e=\Hom_A(e A, A e A)$ is projective as a right $(e A e)$-module.
 \end{proof}

We end this subsection with the following observations which show that right-strongly (resp.\ left-strongly) quasi-hereditary algebras are closed under idempotent reductions.

\begin{proposition} \label{lem2}
Let $A$ be an artin algebra with a right-strongly $($resp.\ left-strongly$)$ heredity chain  
\begin{equation*}
A=H_{0}>H_{1}> \cdots > H_i > \cdots >H_{n}=0.
\end{equation*}
Then the following statements hold.

\begin{itemize}
\item[{\rm (1)}]
For $0< i \leq n-1$, $A/H_i$ has a right-strongly $($resp.\ left-strongly$)$ heredity chain 
\begin{equation*}
A/H_i=H_{0}/H_i>H_{1}/H_i> \cdots > H_{i}/H_i=0.
\end{equation*}

\item[{\rm (2)}]  
Let $e_i \in A$ be an idempotent of $A$ such that $H_i = Ae_iA$ for $0 \leq i \leq n-1$.
Then $e_i A e_i$ has a right-strongly $($resp.\ left-strongly$)$ heredity chain 
\begin{equation*}
e_i Ae_i=e_i H_i e_i> e_i H_{i+1}e_i> \cdots >e_iH_{n}e_i=0.
\end{equation*}
\end{itemize} 
\end{proposition}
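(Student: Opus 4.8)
The plan is to verify directly that each of the two chains inherits conditions (a) and (b) of Definition \ref{sthered} from the original right-strongly heredity chain, treating the two statements separately since they involve different algebras.

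For statement (1), I would first observe that the chain $A/H_i = H_0/H_i > H_1/H_i > \cdots > H_i/H_i = 0$ is genuinely a chain of idempotent ideals of $A/H_i$: each $H_j/H_i$ is the image of the idempotent ideal $H_j$ under the projection $A \to A/H_i$, and images of idempotent ideals remain idempotent. For condition (b), the key point is that the filtration subquotient $(H_j/H_i)/(H_{j+1}/H_i)$ is canonically isomorphic to $H_j/H_{j+1}$, and the relevant ambient quotient $(A/H_i)/(H_{j+1}/H_i)$ is isomorphic to $A/H_{j+1}$; hence condition (b) for the new chain is literally condition (b) for the old chain, with nothing to prove beyond these canonical identifications. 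For condition (a), I would use Proposition \ref{rshchc} together with Lemma \ref{lem0}: since the original chain is a right-strongly heredity chain, each $H_j/H_{j+1}$ has projective dimension at most one as a right $A$-module, and by the canonical identification $(H_j/H_i)/(H_{j+1}/H_i) \cong H_j/H_{j+1}$ the subquotients of the new chain are exactly these modules, now viewed over $A/H_i$. The mild technical issue here is that projective dimension is being measured over $A/H_i$ rather than over $A$; I would resolve this by noting that $A/H_i$ is itself a quotient by a heredity ideal (so it is again quasi-hereditary) and that $H_{i-1}/H_i$ being projective over $A/H_i$ propagates downward exactly as in the inductive argument of Lemma \ref{lem0}, applied now to the chain inside $A/H_i$.

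For statement (2), I would work with the idempotent reduction functor $(-)e_i = \Hom_A(e_iA, -) \colon \mod A \to \mod e_iAe_i$. The chain $e_iAe_i = e_iH_ie_i > e_iH_{i+1}e_i > \cdots > e_iH_ne_i = 0$ is a chain of idempotent ideals of $e_iAe_i$ because $e_iH_je_i = e_i(Ae_jA)e_i$ and one checks these remain idempotent; moreover strict inclusions are preserved since $H_i = Ae_iA$ means $e_i$ dominates all the relevant idempotents $e_j$ for $j \geq i$, so no collapse occurs. The heart of the argument is condition (a): I would invoke Lemma \ref{idempi}(3), which states precisely that if $AeA$ is projective as a right $A$-module then $Ae$ is projective as a right $eAe$-module. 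Applying this with $e = e_j$ (each $H_j = Ae_jA$ is projective over $A$ by hypothesis) and then restricting along $e_i$ shows that each $e_iH_je_i = e_i(Ae_jA)e_i$ is projective over $e_iAe_i$; one uses here that $e_jAe_j$-projectivity of $Ae_j$ passes to $e_iAe_i$-projectivity after applying $\Hom_A(e_iA,-)$, via Lemma \ref{idempi}(2). For condition (b), I expect the identity $e_i(H_j/H_{j+1})J(\cdots)(H_j/H_{j+1})e_i = 0$ to follow from the original vanishing $(H_j/H_{j+1})J(A/H_{j+1})(H_j/H_{j+1}) = 0$ by sandwiching with $e_i$, using that $e_i J(e_iAe_i) e_i \subseteq e_i J(A) e_i$ so that the radical of the corner algebra is controlled by the radical of $A$.

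The main obstacle, and the step I would spend the most care on, is condition (a) in statement (2): tracking how right-$A$-projectivity of $H_j = Ae_jA$ descends through the corner functor to right-$(e_iAe_i)$-projectivity of $e_iH_je_i$. The projectivity statements in Lemma \ref{idempi} are stated for a single idempotent, so I would need to compose the reductions carefully — first relating $A$-projectivity of $Ae_jA$ to $e_jAe_j$-projectivity of $Ae_j$, then further restricting to the $e_i$-corner — and confirm that $\add$-closure is preserved at each stage so that Lemma \ref{idempi}(2) genuinely applies. Everything else reduces to the canonical quotient and corner identifications, which are routine once set up.
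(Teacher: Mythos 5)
Your overall decomposition matches the paper's, and your treatment of part (2) is essentially the intended argument: the paper applies Lemma \ref{idempi}(3) to $e=e_i$ to get $Ae_i\in\proj (e_iAe_i)$, then Lemma \ref{idempi}(2) to see that $\Hom_A(e_iA,-)$ preserves projectives, so $H_je_i\in\proj (e_iAe_i)$ and its direct summand $e_iH_je_i$ is projective. Note, though, that your chaining is slightly off: applying Lemma \ref{idempi}(3) with $e=e_j$ produces $e_jAe_j$-projectivity of $Ae_j$, which lands you in the wrong corner algebra; the idempotent to which (3) must be applied is $e_i$ (using that $H_i=Ae_iA$ is projective over $A$ by hypothesis), and $e_j$ is not needed for this step. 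This is a repairable slip rather than a dead end, and you correctly flagged it as the delicate point. Your checks that the chains consist of idempotent ideals and that condition (b) passes to quotients and corners are fine, and more explicit than the paper's.

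The genuine gap is in part (1). You propose to obtain projectivity of $H_j/H_i$ over $A/H_i$ by running the inductive argument of Lemma \ref{lem0} inside $A/H_i$, but the input that argument requires is that each subquotient $(H_j/H_i)/(H_{j+1}/H_i)\cong H_j/H_{j+1}$ has projective dimension at most one \emph{as a right $(A/H_i)$-module}, and nothing in your proposed resolution supplies this: the hypothesis gives $\pd_A(H_j/H_{j+1})\leq 1$, and projective dimension does not in general descend to quotient algebras. The facts you invoke (that $A/H_i$ is quasi-hereditary, and that $H_{i-1}/H_i$ is projective over $A/H_i$) only concern projectivity over the further quotients $A/H_{j+1}$, which is precisely the difference between a heredity chain and a right-strongly heredity chain, so as stated the argument is circular. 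The fix is to bypass Lemma \ref{lem0} entirely: since $H_i=H_i^2\subseteq H_jH_i\subseteq H_i$, one has $H_j\otimes_A(A/H_i)\cong H_j/H_jH_i=H_j/H_i$, and $-\otimes_A(A/H_i)$ carries projective right $A$-modules to projective right $(A/H_i)$-modules. This one-line base-change observation is exactly the paper's proof of (1).
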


\begin{proof}
(1) It is enough to show that $H_j/H_i$ is projective as a right $(A/H_i)$-module for $1 \leq j <i$.
This is immediate since $H_j$ is projective as a right $A$-module and the functor $- \otimes_A (A/H_i)  : \mod A \to \mod A/H_i$ reflects projectivity.

(2) We prove that $e_iH_j e_i$ is a projective right $(e_iAe_i)$-module.
By Lemma \ref{idempi} (3), we have that $Ae_i$ is projective as a right $(e_iAe_i)$-module.
It follows from Lemma \ref{idempi} (2) that $H_je_i$ is projective as a right $(e_iAe_i)$-module.
Since $H_j e_i = e_i H_j e_i \oplus (1-e_i) H_j e_i$, we have $e_iH_je_i \in \proj (e_iAe_i)$. 
\end{proof}

\subsection{Right rejective subcategories}
In this subsection, we recall the definitions of right rejective subcategories. 
Using them, we characterize right-strongly (resp.\ left-strongly) quasi-hereditary algebras.
We refer to {\cite[Appendix]{ASS}} for background on category theory.

Let $\mathcal{C}$ be an additive category, and put $\mathcal{C}(X, Y):=\Hom_{\mathcal{C}}(X,Y)$. 
In the rest of this paper, {\it we assume that any subcategory is full and closed under isomorphisms, direct sums and direct summands.}
We denote by $\mathcal{J}_{\mathcal{C}}$ the Jacobson radical of $\mathcal{C}$, and by $\ind \mathcal{C}$ the set of isoclasses of indecomposable objects in $\mathcal{C}$.  
For a subcategory $\mathcal{C}'$ of $\mathcal{C}$, we denote by $[\mathcal{C}']$ the ideal of $\mathcal{C}$ consisting of morphisms which factor through some object of $\mathcal{C}'$.  
For an ideal $\mathcal{I}$ of $\mathcal{C}$, the factor category $\mathcal{C}/\mathcal{I}$ is defined by $\mathit{ob}(\mathcal{C}/\mathcal{I}):=\mathit{ob}(\mathcal{C})$ and $(\mathcal{C}/\mathcal{I})(X,Y):= \mathcal{C}(X,Y)/\mathcal{I}(X,Y)$ for any $X, Y \in \mathcal{C}$.  
Recall that an additive category $\mathcal{C}$ is called {\it Krull--Schmidt} if any object of $\mathcal{C}$ is isomorphic to finite direct sum of objects whose endomorphism rings are local.

\begin{definition} [Auslander--Smal$\o$ {\cite{AS}}]
Let $\mathcal{C}$ be an additive category and $\mathcal{C}'$ a subcategory of $\mathcal{C}$. 
We say that $f \in \mathcal{C}(Y,X)$ is a \emph{right $\mathcal{C}'$-approximation} of $X$ if the following equivalent conditions are satisfied.

\begin{itemize}
\item[(i)] $Y \in \mathcal{C}'$ and $\mathcal{C}(- , Y) \xrightarrow{f \circ -} \mathcal{C}(- , X) \to 0$ is exact on $\mathcal{C}'$. 

\item[(ii)] $Y \in \mathcal{C}'$ and the induced morphism $\mathcal{C}(-, Y) \xrightarrow{f \circ-} [\mathcal{C}'](-, X)$ is an epimorphism on $\mathcal{C}$.

Dually, a \emph{left $\mathcal{C}^{'}$-approximation} is defined.  
\end{itemize}
\end{definition}

Now, we introduce the following key notions in this paper.

\begin{definition} [Iyama {\cite[2.1(1)]{I}}] \label{rejsub}
Let $\mathcal{C}$ be an additive category and $\mathcal{C}'$ a subcategory of $\mathcal{C}$.

\begin{itemize}
\item[(1)] We call $\mathcal{C}'$ a \emph{right $(${\rm resp}.\ left$)$ rejective subcategory} of $\mathcal{C}$
if the inclusion functor $\mathcal{C}'\hookrightarrow\mathcal{C}$ has a right (resp.\ left) adjoint with a counit $\varepsilon^-$ (resp.\ unit $\varepsilon^+$) such that $\varepsilon_{X}^-$ is a monomorphism (resp.\ $\varepsilon_{X}^+$ is an epimorphism) for $X\in\mathcal{C}$.

\item[(2)] We call $\mathcal{C}'$ a \emph{rejective subcategory} of $\mathcal{C}$ if $\mathcal{C}'$ is a right and left rejective subcategory of $\mathcal{C}$.
\end{itemize}
\end{definition}

We often use the following equivalent conditions.

\begin{proposition}[Iyama {\cite[Definition 1.5]{I2}}] \label{app}
Let $\mathcal{C}$ be an additive category and $\mathcal{C}'$ a subcategory of $\mathcal{C}$.
Then the following are equivalent:

\begin{itemize}
\item[{\rm (i)}] $\mathcal{C}'$ is a right $($resp.\ left$)$ rejective subcategory of $\mathcal{C}$.

\item[{\rm (ii)}] For any $X\in\mathcal{C}$, there exists a monic right $($resp.\ epic left$)$ $\mathcal{C}'$-approximation $f_X \in \mathcal{C}\left(Y,X\right)$ $($resp.\ $f^X \in \mathcal{C}\left(X,Y\right))$ of $X$.
\end{itemize}
\end{proposition}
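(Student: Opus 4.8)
The plan is to recognize this proposition as the standard correspondence between right adjoint functors and universal arrows, where the essential role of the monomorphism condition is to upgrade a mere right approximation into a universal arrow by supplying the missing uniqueness clause. I would prove the right rejective statement and invoke duality (passing to $\mathcal{C}^{\op}$) for the left rejective one.

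For (i) $\Rightarrow$ (ii), suppose the inclusion $\iota : \mathcal{C}' \hookrightarrow \mathcal{C}$ admits a right adjoint $R$ with counit $\varepsilon^-$ whose components are monomorphisms. For $X \in \mathcal{C}$, I would put $Y := R(X) \in \mathcal{C}'$ and $f_X := \varepsilon_X^- \in \mathcal{C}(Y,X)$. The defining universal property of the counit says exactly that every $g \in \mathcal{C}(Y',X)$ with $Y' \in \mathcal{C}'$ factors through $\varepsilon_X^-$, which is precisely condition (i) in the definition of a right $\mathcal{C}'$-approximation; since $\varepsilon_X^-$ is monic by hypothesis, $f_X$ is the desired monic right approximation.

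For (ii) $\Rightarrow$ (i), the real work is to manufacture the adjoint. For each $X \in \mathcal{C}$ fix a monic right $\mathcal{C}'$-approximation $f_X : Y_X \to X$ and set $R(X) := Y_X$. To define $R$ on a morphism $h : X \to X'$, I would apply the approximation property of $f_{X'}$ to the composite $h \circ f_X : Y_X \to X'$, obtaining $R(h) : Y_X \to Y_{X'}$ with $f_{X'} \circ R(h) = h \circ f_X$. Here the monicity of $f_{X'}$ is essential: it makes $R(h)$ unique, and this uniqueness immediately forces $R(\mathrm{id}_X) = \mathrm{id}_{Y_X}$ and $R(h' \circ h) = R(h') \circ R(h)$, so that $R$ is a functor and the family $(f_X)$ assembles into a natural transformation $\varepsilon^- : \iota R \to \mathrm{id}_{\mathcal{C}}$. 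Finally I would verify that $(R, \varepsilon^-)$ is right adjoint to $\iota$: for $Y' \in \mathcal{C}'$ the existence of a factorization through $f_X$ gives surjectivity of the map $\mathcal{C}(Y', R(X)) \to \mathcal{C}(Y', X)$, $\, u \mapsto f_X \circ u$, while monicity of $f_X$ gives its injectivity, so this map is the required adjunction bijection and $\varepsilon^-$ is its monic counit.

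I expect the only delicate point to be the functoriality of $R$ in (ii) $\Rightarrow$ (i); every instance of well-definedness there is governed by the single principle that a morphism factoring through a monomorphism does so uniquely. No nontrivial computation is needed beyond this bookkeeping, and the left rejective case is obtained by reading the entire argument inside $\mathcal{C}^{\op}$.
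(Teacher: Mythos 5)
Your proposal is correct and follows essentially the same route as the paper: the counit components serve directly as the monic right approximations in one direction, and in the other the monicity of the chosen approximations forces uniqueness of the induced morphisms, which yields functoriality of the right adjoint and the adjunction bijection. The paper leaves the functoriality and adjunction checks as "easy to verify," whereas you spell them out; there is no substantive difference.
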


\begin{proof}
(i) $\Rightarrow$ (ii): If the inclusion functor $F : \mathcal{C}'\hookrightarrow\mathcal{C}$ has a right adjoint $G$ with a counit $\varepsilon^-$,
then $\varepsilon_{X}^-: G(X) \to X$ is a right $\mathcal{C}'$-approximation of $X \in \mathcal{C}$.
Thus the assertion follows.

(ii) $\Rightarrow$ (i): We assume that, for any $X \in \mathcal{C}$, there exists a monic right $\mathcal{C}'$-approximation of $X$.
We construct a right adjoint functor $G: \mathcal{C} \to \mathcal{C}'$ as follows.
For $X \in \mathcal{C}$, take a monic right $\mathcal{C}'$-approximation $f_X : C_X \to X$.
For a morphism $\varphi \in \mathcal{C}(X, Y)$, there exists a unique morphism $C_{\varphi} : C_X \to C_Y$ making the following diagram commutative.
\[
\xymatrix@C=25pt@R=15pt{ C_X \ar[d]_{C_{\varphi }} \ar[r]^-{f_X} & X \ar[d]^{\varphi} \\
C_Y \ar[r]^-{f_Y} & Y.
 }
\]
It is easy to check that $G(X):=C_X$ and $G(\varphi):=C_{\varphi}$ give a right adjoint functor $G: \mathcal{C} \to \mathcal{C}'$ of the inclusion functor $F: \mathcal{C}'\to \mathcal{C}$ and $f$ gives a counit.
\end{proof}

Right rejective subcategories of $\mod A$ are characterized as follows.

\begin{proposition} [Iyama {\cite[Proposition 1.5.2]{I2}}] \label{cl}
Let $A$ be an artin algebra and $\mathcal{C}$ a subcategory of $\mod A$.
Then $\mathcal{C}$ is a right $($resp.\ left$)$ rejective subcategory of $\mod A$ if and only if $\mathcal{C}$ is closed under factor modules $($resp.\ submodules$)$.
\end{proposition}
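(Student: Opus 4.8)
The plan is to reduce everything to the approximation characterization of right rejective subcategories supplied by Proposition~\ref{app}: $\C$ is right rejective in $\mod A$ exactly when every $X \in \mod A$ admits a \emph{monic} right $\C$-approximation. I would prove the two implications of the right/factor-module statement directly, and obtain the left/submodule statement by the evident dual argument.

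For the implication ``closed under factor modules $\Rightarrow$ right rejective'', I would fix $X \in \mod A$ and consider its $\C$-trace, the submodule $T_X := \sum \im(g) \subseteq X$, where $g$ ranges over all morphisms $C \to X$ with $C \in \C$. Because $X$ has finite length, this sum is already realised by finitely many morphisms $g_1, \dots, g_r$, so that $T_X = \im\!\big( (g_1, \dots, g_r) \colon \bigoplus_{i} C_i \to X \big)$ with $\bigoplus_i C_i \in \C$. Since $T_X$ is a factor module of $\bigoplus_i C_i$ and $\C$ is closed under factor modules, we get $T_X \in \C$; and by construction every morphism from an object of $\C$ into $X$ has image inside $T_X$. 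Hence the inclusion $T_X \hookrightarrow X$ is a monic right $\C$-approximation, as required.

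For the converse ``right rejective $\Rightarrow$ closed under factor modules'', I would take $C \in \C$ together with an epimorphism $\pi \colon C \twoheadrightarrow Q$, pick a monic right $\C$-approximation $f \colon Y \to Q$, and factor $\pi = f h$ through the approximation $f$ (possible as $C \in \C$). As $\pi$ is epic, so is $f$; being simultaneously monic, $f$ is an isomorphism in the abelian category $\mod A$, whence $Q \cong Y \in \C$. The left-rejective/submodule case is entirely dual: one replaces the trace $T_X$ by the reject $\bigcap \ker(g)$ over all $g \colon X \to C$ with $C \in \C$, noting that $X/\!\bigcap \ker(g)$ embeds into a finite direct sum of objects of $\C$ and hence lies in $\C$, so that $X \twoheadrightarrow X/\!\bigcap \ker(g)$ is an epic left $\C$-approximation; the reverse implication dualises the isomorphism argument above.

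The only genuinely delicate point is the finiteness step in the trace construction: a priori the sum defining $T_X$ ranges over a proper class of morphisms, and I must argue that it is attained by a finite subfamily so that $T_X$ becomes a factor module of a single object of $\C$. This is exactly where the finite length of $X$, together with the standing assumption that $\C$ is closed under finite direct sums and (here) under factor modules, is used; once this is in place the remaining verifications are formal.
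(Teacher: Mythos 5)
Your proposal is correct and follows essentially the same route as the paper: the trace $\sum \operatorname{Im}(g)$ gives the monic right $\mathcal{C}$-approximation in one direction, and in the other direction the monic approximation of a factor module of an object of $\mathcal{C}$ is forced to be an isomorphism. The only difference is that you spell out the finite-length argument showing the trace is attained by finitely many morphisms, which the paper leaves implicit.
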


\begin{proof}
We show the ``if'' part.
For $M \in \mod A$, we put $G(M):=\sum_{X \in \mathcal{C}, f \in \Hom_A(X, M)} f(X)$.
Then $G(M)$ is a factor module of some module in $\mathcal{C}$.
Thus we have $G(M) \in \mathcal{C}$.
Since the natural inclusion $G(M) \hookrightarrow M$ is a monic right $\mathcal{C}$-approximation of $M$, the assertion holds.

We show the ``only if'' part.
For a surjection $f: M \to N$ with $M \in \mathcal{C}$, we show that $N$ belongs to $\mathcal{C}$.
Since $\mathcal{C}$ is a right rejective subcategory of $\mod A$, there exists a monic right $\mathcal{C}$-approximation $f_N : G(N) \to N$ of $N$.
Thus we have a morphism $g : M \to G(N)$ such that $f=f_N \circ g$.
Since $f$ is surjective, we have that $f_N$ is a bijection.
Hence we have $N \in \mathcal{C}$.
\end{proof}

\begin{proposition} [Iyama {\cite[Theorem 1.6.1(1)]{I2}}] \label{bij}
Let $A$ be an artin algebra.
Then there exists a bijection between factor algebras $B$ of $A$ and rejective subcategories $\mathcal{C}$ of $\mod A$ given by $B \mapsto \mod B$. 
\end{proposition}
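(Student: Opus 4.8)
The plan is to lean on Proposition \ref{cl}, which says that a subcategory $\C$ of $\mod A$ is rejective if and only if it is closed under both submodules and factor modules. Throughout I identify a factor algebra $B = A/I$ (given by a two-sided ideal $I$) with the subcategory $\{M \in \mod A : MI = 0\}$ of $\mod A$. This subcategory is visibly closed under submodules and factor modules, so the assignment $B \mapsto \mod B$ does land in rejective subcategories; the content of the statement is that it is a bijection, so the real work is to produce an inverse.

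First I would attach to each rejective subcategory $\C$ the two-sided ideal $I := \ann_A(\C) = \{a \in A : Ma = 0 \text{ for all } M \in \C\}$ and set $B := A/I$. The inclusion $\C \subseteq \mod B$ is immediate: every $M \in \C$ satisfies $MI = 0$ by definition of $I$. For the reverse inclusion I would reduce everything to the single claim that $A/I \in \C$; granting this, any $M \in \mod B$ is a factor module of some $B^{k}$, so closure under finite direct sums (part of our standing assumption) and under factor modules forces $M \in \C$, giving $\mod B \subseteq \C$.

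The crux is thus $A/I \in \C$, and I expect this to be the main obstacle, since it is where the artinian hypothesis enters. I would argue in three steps. For $M \in \C$ and $m \in M$, the cyclic submodule $mA \cong A/\ann_A(m)$ lies in $\C$ by closure under submodules; taking a finite generating set $m_1, \dots, m_k$ of $M$, the diagonal map embeds $A/\ann_A(M)$ into $\bigoplus_{i=1}^{k} A/\ann_A(m_i)$, so $A/\ann_A(M) \in \C$. Next, because $A$ satisfies the descending chain condition on ideals, the intersection $I = \bigcap_{M \in \C} \ann_A(M)$ is already realized by finitely many $M_1, \dots, M_r \in \C$. Finally the diagonal map exhibits $A/I$ as a submodule of $\bigoplus_{j=1}^{r} A/\ann_A(M_j) \in \C$, whence $A/I \in \C$. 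The only delicate bookkeeping here is keeping straight that annihilators of elements are merely right ideals while annihilators of modules are two-sided, so that all of the quotients above are legitimate right $A$-modules.

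It then remains to check that the two constructions are mutually inverse. The identity $\mod(A/\ann_A(\C)) = \C$ is exactly the pair of inclusions established above. Conversely, starting from $B = A/I$, I would verify $\ann_A(\mod B) = I$: the inclusion $I \subseteq \ann_A(\mod B)$ is clear, and since $A/I$ itself lies in $\mod B$ with $\ann_A(A/I) = I$ (using that $I$ is two-sided and $1 \in A$), the reverse inclusion follows. Hence $B \mapsto \mod B$ and $\C \mapsto A/\ann_A(\C)$ are mutually inverse, which is the asserted bijection.
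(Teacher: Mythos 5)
Your overall route is the same as the paper's: the paper likewise reduces everything to Proposition \ref{cl} and then simply asserts that a subcategory of $\mod A$ closed under submodules and factor modules is precisely $\mod B$ for a factor algebra $B$; you are supplying the details of that assertion. However, one step of your elaboration is false as written. For a finite generating set $m_1,\dots,m_k$ of $M$ one only has $\ann_A(M)\subseteq\bigcap_{i}\ann_A(m_i)$, and the inclusion can be strict, precisely because $\ann_A(m_i)$ is only a right ideal: from $m_i a=0$ one cannot conclude $(m_i a_i)a=m_i(a_ia)=0$. Concretely, let $A$ be the upper triangular $2\times 2$ matrices over a field and $M=e_{11}A$, generated by the single element $e_{11}$; then $\ann_A(M)=0$ while $\ann_A(e_{11})=e_{22}A\neq 0$, so $A/\ann_A(M)=A$ (length $3$) does not embed into $A/\ann_A(e_{11})\cong e_{11}A$ (length $2$). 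Hence the claimed embedding of $A/\ann_A(M)$ into $\bigoplus_{i=1}^{k}A/\ann_A(m_i)$ fails for generating sets.

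The gap is repaired by the same descending chain argument you invoke in the next step, applied one level earlier: among the right ideals $\bigcap_{m\in F}\ann_A(m)$ with $F\subseteq M$ finite there is a minimal one, and minimality forces it to equal $\ann_A(M)=\bigcap_{m\in M}\ann_A(m)$. Choosing such an $F=\{m_1,\dots,m_r\}$ (in general not a generating set), the map $a\mapsto(m_1a,\dots,m_ra)$ embeds $A/\ann_A(M)$ into $M^{r}$, so $A/\ann_A(M)\in\mathcal{C}$ by closure under finite direct sums and submodules. With this correction the remainder of your argument --- realizing $I=\bigcap_{M\in\mathcal{C}}\ann_A(M)$ by finitely many modules, deducing $A/I\in\mathcal{C}$, and checking that $B\mapsto\mod B$ and $\mathcal{C}\mapsto A/\ann_A(\mathcal{C})$ are mutually inverse --- is sound, and it is a legitimate filling-in of the fact the paper declares to be clear.
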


\begin{proof}
This is clearly from Proposition \ref{cl} since a full subcategory of $\mod A$ which is closed under submodules and factor modules is precisely $\mod B$ for a factor algebra $B$ of $A$.
\end{proof}

\begin{example} \label{tor}
Let $A$ be an artin algebra.

\begin{itemize}
\item[(a)] Let $(\mathcal{T}, \mathcal{F})$ be a torsion pair on $\mod A$.
Then $\mathcal{T}$ is a right rejective subcategory and $\mathcal{F}$ is a left rejective subcategory of $\mod A$ by Proposition \ref{cl}.

\item[(b)] For a classical tilting $A$-module $T$, we put $\mathcal{T}:=\{ Y\in \mod A \; | \; \Ext_A^1(T,Y)=0 \}$ and $\mathcal{F}:=\{ Y\in \mod A \; | \; \Hom_A(T, Y) =0 \}$.
Then $(\mathcal{T}, \mathcal{F})$ is a torsion pair on $\mod A$, and therefore $\mathcal{T}$ (resp.\ $\mathcal{F}$) is a right (resp.\ left) rejective subcategory of $\mod A$. 

\item[(c)] Assume that $A$ is right-strongly quasi-hereditary and let $T$ be a characteristic tilting module.
Then $T$ is a classical tilting module \cite[Lemma 4.1]{DR6} and hence $(\mathcal{T}, \mathcal{F})$ is a torsion pair on $\mod A$.
Since $\mathcal{T}$ coincides with the subcategory $\mathcal{F}(\Delta)^{\perp}:= \{ Y \in \mod A \;|\; \Ext_A^i (\mathcal{F}(\Delta), Y) =0 \mathrm{\; for \; all\; } i \geq 1 \}$ \cite[\S 4]{DR6}, we have that $\mathcal{F}(\Delta)^{\perp}$ is a right rejective subcategory of $\mod A$.
\end{itemize}
\end{example}

Right rejective subcategories of $\proj A$ are characterized as follows.

\begin{proposition} [Iyama {\cite[Theorem 3.2 (2)]{I2}}] \label{iy1}
Let $A$ be an artin algebra and $e$ an idempotent of $A$. 
Then $\add e A$ is a right $($resp.\ left$)$ rejective subcategory of $\proj A$ if and only if $A e A$ is a projective right $($resp.\ left$)$ $A$-module.
In this case, we have $\gl eAe \leq \gl A$.
\end{proposition}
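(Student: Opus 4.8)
The plan is to treat the right rejective case in full, deduce the left case by passing to $A^{\op}$, and dispose of the global dimension inequality separately. The inequality $\gl eAe \leq \gl A$ requires no new argument: once $AeA$ is projective as a right $A$-module, Lemma \ref{idempi}(3) gives that $Ae$ is projective as a right $(eAe)$-module, whence Lemma \ref{idempi}(2) yields $\gl eAe \leq \gl A$. Thus the content is the equivalence, and I would first reformulate rejectivity via Proposition \ref{app}: $\add eA$ is right rejective in $\proj A$ if and only if every $P \in \proj A$ admits a monic right $(\add eA)$-approximation. The object to track throughout is the trace $\tau(M) := MeA$, which one checks equals $\sum_{g \in \Hom_A(eA, M)} \im g$, since a morphism $eA \to M$ is determined by the image of $e$, which lies in $Me$; in particular $\tau(A) = AeA$.

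For the direction assuming $AeA$ is projective as a right $A$-module, I would first note $AeA \in \add eA$ by Lemma \ref{idempi}(1). For arbitrary $P \in \proj A$, write $P \oplus Q \cong A^m$; then $PeA$ and $QeA$ are submodules of $A^m eA = (AeA)^m$ meeting only in $P \cap Q = 0$ and summing to all of $(AeA)^m$, so $PeA$ is a direct summand of an object of $\add eA$ and hence $PeA \in \add eA$. The inclusion $PeA \hookrightarrow P$ is manifestly monic, and it is a right $(\add eA)$-approximation because the image of any morphism from an object of $\add eA$ into $P$ is contained in the trace $\tau(P) = PeA$. By Proposition \ref{app} this shows $\add eA$ is right rejective.

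Conversely, suppose $\add eA$ is right rejective. Applying Proposition \ref{app} to $P = A$ yields a monic right $(\add eA)$-approximation $f : Y \to A$ with $Y \in \add eA$. Since $Y \in \add eA$ we have $\im f \subseteq \tau(A) = AeA$, while every morphism $eA \to A$ factors through $f$, giving $AeA = \tau(A) \subseteq \im f$; hence $\im f = AeA$, and as $f$ is monic, $Y \cong AeA$. Because $Y \in \add eA \subseteq \proj A$, this proves $AeA$ is projective as a right $A$-module. The left version then follows by applying the right version to $A^{\op}$ through the standard duality $\Hom_A(-, A) : \proj A \to \proj A^{\op}$, which sends $\add eA$ to $\add Ae$ and, being a contravariant equivalence, interchanges right and left rejective subcategories; since $Ae$ is the projective right $A^{\op}$-module attached to $e$ and $A^{\op}eA^{\op} = AeA$, projectivity of $AeA$ as a left $A$-module is exactly projectivity of this ideal as a right $A^{\op}$-module.

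I expect the main obstacle to be the second paragraph: verifying carefully that the trace $PeA$ lies in $\add eA$ for \emph{every} projective $P$, not merely for $P = A$, and that the resulting inclusion is genuinely a right $(\add eA)$-approximation. The reduction to $P = A$ via the direct-summand decomposition, together with the identification of the trace with the image of any right approximation, is the crux that powers both implications.
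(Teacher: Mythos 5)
Your proof is correct and follows essentially the same route as the paper: in one direction you identify the image of a monic right $(\add eA)$-approximation of $A$ with the trace $AeA$, and in the other you use Lemma \ref{idempi}(1) to place $AeA$ in $\add eA$ and take the trace inclusion as the approximation, with the left case by the duality $\Hom_A(-,A)$ and the global dimension bound from Lemma \ref{idempi}(2),(3). The only difference is that you spell out explicitly (via $P\oplus Q\cong A^m$) why general $P\in\proj A$ admits the monic approximation $PeA\hookrightarrow P$, a routine point the paper leaves implicit by treating only $P=A$.
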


\begin{proof}
Assume that $\add e A$ is a right rejective subcategory of $\proj A$.
Then there exists  $a \in \Hom_{A}(P, A)$ with $P \in \add (e A)_A$ such that 
\[
P=\Hom_{A}(A, P) \xrightarrow{a \circ -} [\add e A] (A, A) = A e A
\]
is an isomorphism.
Hence $A e A \cong P$ is a projective right $A$-module.

Conversely, we assume that $A e A$ is a projective right $A$-module.
By Lemma \ref{idempi} (1), we have $A e A \in \add e A$ as a right $A$-module.
The inclusion map $i : A e A \hookrightarrow A$ gives a right $(\add e A)$-approximation of $A$ since 
\[
A e = A e A e = \Hom_{A}(e A, A e A) \xrightarrow{i \circ-} \Hom_{A}(e A, A)=A e
\]
is an isomorphism.

In this case, $A e$ is projective as a right $(eAe)$-module by Lemma \ref{idempi} (3).
Thus it follows from Lemma \ref{idempi} (2) that $\gl eAe \leq \gl A$. 

By the duality $\Hom_A(-,A): \proj A \to \proj A^{\op}$, we have that $\add eA$ is left rejective in $\proj A$ if and only if $\add Ae$ is right rejective in $\proj A^{\op}$.
Hence the statement for left rejective subcategories follows.
\end{proof}

To introduce rejective chains, we need the following notion.

\begin{definition}
Let $\mathcal{C}$ be a Krull--Schmidt category.

\begin{itemize}
\item[{\rm (1)}] We call $\mathcal{C}$ a {\it semisimple} category if $\mathcal{J}_{\mathcal{C}}=0$.

\item[{\rm (2)}] A subcategory $\mathcal{C}'$ of $\mathcal{C}$ is called {\it cosemisimple} in $\mathcal{C}$ if the factor category $\mathcal{C}/\left[\mathcal{C}'\right]$ is semisimple. 
\end{itemize}
\end{definition}

We often use the fact that $\mathcal{C}'$ is a cosemisimple subcategory of $\mathcal{C}$ if and only if $[\mathcal{C}'](-, X)=\mathcal{J}_{\mathcal{C}}(-, X)$ holds for any $X \in \ind \mathcal{C} \setminus \ind \mathcal{C}'$.

\begin{lemma} \label{coss}
Let $A$ be an artin algebra and $Ae'A \subset AeA$ idempotent ideals of $A$.
Then the following conditions are equivalent:

\begin{itemize}
\item[{\rm (i)}] $\add e'A$ is a cosemisimple subcategory of $\add eA$.

\item[{\rm (ii)}] $J(eAe/eAe'Ae)=0$.

\item[{\rm (iii)}] $(AeA/Ae'A) J(A/Ae'A) (AeA/Ae'A) =0$.
\end{itemize}
\end{lemma}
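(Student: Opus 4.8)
I would prove the two equivalences (i)$\Leftrightarrow$(ii) and (ii)$\Leftrightarrow$(iii) separately. The first is categorical and passes through the standard equivalence $\add eA\simeq\proj(eAe)$; the second is a direct manipulation of Jacobson radicals inside a corner algebra. Throughout I set $C:=eAe$.

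\emph{(i)$\Leftrightarrow$(ii).} The plan is to transport the situation along the equivalence $G:=\Hom_A(eA,-)\colon \add eA \to \proj(eAe)$. This functor sends $eA$ to the regular module $C$ and sends $e'A$ to the right $C$-module $\Hom_A(eA,e'A)=e'Ae$, hence carries $\add e'A$ to $\add(e'Ae)$. Since any equivalence preserves the radical $\mathcal{J}$ and thus (co)semisimplicity, (i) is equivalent to $\add(e'Ae)$ being cosemisimple in $\proj C$. Next I would identify the idempotent ideal of $C$ attached to this subcategory: using $\Hom_C(e'Ae,C)\cong\Hom_A(e'A,eA)=eAe'$ acting by left multiplication, the trace ideal of $e'Ae$ in $C$ is $eAe'\cdot e'Ae=eAe'Ae$. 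Finally, in the factor category $\proj C/[\add(e'Ae)]$ the image $\bar C$ of the regular module is an additive generator, and its endomorphism ring is $\End_{\proj C}(C)/[\add(e'Ae)](C,C)=C/eAe'Ae$. Hence this factor category is semisimple if and only if $C/eAe'Ae=eAe/eAe'Ae$ is a semisimple algebra, i.e.\ $J(eAe/eAe'Ae)=0$, giving (i)$\Leftrightarrow$(ii).

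\emph{(ii)$\Leftrightarrow$(iii).} Put $B:=A/Ae'A$ and let $f$ be the image of $e$ in $B$. First I would record the identity $eAe\cap Ae'A=eAe'Ae$: the inclusion $\supseteq$ is clear, and if $x\in eAe\cap Ae'A$ then $x=exe\in e(Ae'A)e=eAe'Ae$. This yields $fBf\cong eAe/eAe'Ae$, and combined with the standard corner formula $J(fBf)=fJ(B)f$, condition (ii) becomes $fJ(B)f=0$. On the other hand $AeA/Ae'A=BfB$, so condition (iii) reads $BfB\,J(B)\,BfB=0$. The equivalence of these two is immediate from $BJ(B)B\subseteq J(B)$: one direction uses $f\in BfB$ to get $fJ(B)f\subseteq BfB\,J(B)\,BfB$, and the other groups $BfB\,J(B)\,BfB=B\,f\,(BJ(B)B)\,f\,B\subseteq B(fJ(B)f)B$.

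\textbf{Main obstacle.} I expect the delicate point to be the categorical half (i)$\Leftrightarrow$(ii): one must correctly compute the trace ideal of $e'Ae$ in $eAe$ as $eAe'Ae$ and justify that semisimplicity of the factor category $\proj(eAe)/[\add(e'Ae)]$ is detected by the endomorphism ring $eAe/eAe'Ae$ of its additive generator. Once the corner identity $fBf\cong eAe/eAe'Ae$ and $J(fBf)=fJ(B)f$ are in place, the ring-theoretic equivalence (ii)$\Leftrightarrow$(iii) is routine.
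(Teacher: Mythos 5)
Your proposal is correct and follows essentially the same route as the paper: both halves reduce to the computations $[\add e'A](eA,eA)=eAe'Ae$ (so that semisimplicity of the factor category is detected on $J(\End(eA)/eAe'Ae)=J(eAe/eAe'Ae)$) and the corner identity $J(eAe/eAe'Ae)=eJ(A/Ae'A)e$. Your detour through the equivalence $\add eA\simeq\proj(eAe)$ and your explicit verification of $BfB\,J(B)\,BfB=B(fJ(B)f)B$ only spell out steps the paper leaves implicit.
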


\begin{proof}
(i) $\Leftrightarrow$ (ii): Let $\mathcal{C}:=\add eA /[\add e'A]$.
The condition (i) means $\mathcal{J}_{\mathcal{C}}=0$.
This is equivalent to (ii) since $\mathcal{J}_\mathcal{C}(eA, eA) = J(\End_\mathcal{C}(eA))=J(eAe/eAe'Ae)$. 

(ii) $\Leftrightarrow$ (iii): Since $J(eAe/eAe'Ae)=eJ(A/Ae'A)e$, we have the assertion.
\end{proof}

Now, we introduce the following central notion in this paper.

\begin{definition}[Iyama {\cite[2.1(2)]{I}}, {\cite[Definition 2.2]{I2}}] \label{rejch}
Let $\mathcal{C}$ be an additive category and 
\begin{equation} \label{irc}
\mathcal{C}_0 \supset \mathcal{C}_1 \supset \cdots \supset \mathcal{C}_n =0
\end{equation}
a chain of subcategories. 

\begin{itemize}
\item[(1)] We call \eqref{irc} a \emph{rejective chain $(${\rm resp}.\ right rejective, left rejective$)$} if $\mathcal{C}_i$ is a cosemisimple rejective $($resp.\ right rejective, left rejective$)$ subcategory of $\mathcal{C}_{i-1}$ for $1 \leq i \leq n$.

\item[(2)] We call \eqref{irc} a \emph{total right $(${\rm resp}.\ left$)$ rejective chain} if the following conditions hold for $1 \leq i \leq n$:
\begin{enumerate}
\item[(a)] $\mathcal{C}_i$ is a right (resp.\ left) rejective subcategory of $\mathcal{C}$;
\item[(b)] $\mathcal{C}_{i}$ is a cosemisimple subcategory of $\mathcal{C}_{i-1}$.
\end{enumerate}
\end{itemize}
\end{definition}

\begin{remark}
\begin{itemize}
\item[(1)] Rejective chains are total right rejective chains and total left rejective chains \cite[2.1(3)]{I2}.

\item[(2)] Our total right rejective chains are called right rejective chains in {\cite[Definition 2.6]{I3}}.
\end{itemize}
\end{remark}

\begin{example}
Let $A$ be the $\kk$-algebra given in Example \ref{eg}. 
Then 
\[
\proj A= \add A\supset \add (e_2 +e_3)A \supset \add e_3A \supset 0
\]
is a total right rejective chain of $\proj A$.
In fact, the conditions (a) and (b) in Definition \ref{rejch} (2) are satisfied by Proposition \ref{iy1} and $e_1J(A)e_1=0 =e_2J(A)e_2$ respectively.
\end{example}

Now, we are ready to prove the following main result.

\begin{theorem} \label{thm1}
Let $A$ be an artin algebra and
\begin{equation} \label{thm}
A=H_{0}>H_{1}> \cdots > H_i > \cdots >H_{n}=0
\end{equation}
a chain of idempotent ideals of $A$.
For $0 \leq i \leq n-1$, we write $H_i = Ae_iA$, where $e_i$ is an idempotent of $A$. 
Then the following conditions are equivalent:

\begin{itemize}
\item[{\rm (i)}] The chain \eqref{thm} is a right-strongly $($resp.\ left-strongly$)$ heredity chain.

\item[{\rm (ii)}] The following chain is a total right $($resp.\ left$)$ rejective chain of $\proj A$.
\[
\proj A=\add e_0 A \supset \add e_1 A \supset \cdots \supset \add e_n A =0.
\]
\end{itemize}

In particular, an artin algebra $A$ is strongly $($resp.\ right-strongly, left-strongly$)$ quasi-hereditary if and only if $\proj A$ has a rejective $($resp.\ total right rejective, total left rejective$)$ chain.
\end{theorem}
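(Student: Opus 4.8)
The plan is to establish the displayed equivalence (i) $\Leftrightarrow$ (ii) first; the closing ``in particular'' assertion then drops out by combining it with Proposition \ref{lem1}. The essential observation is that the two defining conditions of a right-strongly heredity chain in Definition \ref{sthered} match, term by term, the two defining conditions of a total right rejective chain in Definition \ref{rejch}(2), once we fix the dictionary $H_i = Ae_iA \longleftrightarrow \mathcal{C}_i := \add e_iA$. Before doing anything else I would line up the index ranges: condition (a) for a right-strongly heredity chain runs over $0 \leq i \leq n-1$ while condition (a) of a total right rejective chain runs over $1 \leq i \leq n$, but the endpoints $H_0 = A$, $H_n = 0$ (and correspondingly $\mathcal{C}_0 = \proj A$, $\mathcal{C}_n = 0$) are automatic on both sides, so after the shift $j = i+1$ the nontrivial content on each side concerns exactly $H_1, \dots, H_{n-1}$.

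For the matching itself, condition (a) of a right-strongly heredity chain requires each $H_i = Ae_iA$ to be projective as a right $A$-module, and by Proposition \ref{iy1} this is precisely the assertion that $\add e_iA$ is a right rejective subcategory of $\proj A$, which is condition (a) of Definition \ref{rejch}(2). Condition (b) requires $(H_i/H_{i+1})J(A/H_{i+1})(H_i/H_{i+1}) = 0$; under the shift $j = i+1$ this is exactly condition (iii) of Lemma \ref{coss} for the pair $Ae_jA \subset Ae_{j-1}A$, which that lemma identifies with $\add e_jA$ being cosemisimple in $\add e_{j-1}A$, i.e.\ condition (b) of Definition \ref{rejch}(2). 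Hence (i) and (ii) are literally the conjunction of the same two families of conditions, and the equivalence follows. The left-strongly case is identical: one uses the left-hand version of Proposition \ref{iy1}, while the two-sided radical condition in Lemma \ref{coss}(iii) is left--right symmetric and so needs no change.

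For the ``in particular'' statement I would proceed as follows. By Proposition \ref{lem1}, $A$ is right-strongly quasi-hereditary if and only if it admits a maximal right-strongly heredity chain, and by the equivalence just proved this is the same as $\proj A$ admitting a total right rejective chain. The left-strongly case follows by applying this to $A^{\op}$ via the duality $\Hom_A(-,A): \proj A \to \proj A^{\op}$, under which total left rejective chains of $\proj A$ correspond to total right rejective chains of $\proj A^{\op}$ and, by Definition \ref{left}(2), $A$ is left-strongly quasi-hereditary exactly when $A^{\op}$ is right-strongly quasi-hereditary. For the strongly case, a strongly heredity chain is simultaneously a right- and a left-strongly heredity chain, so its associated chain of subcategories is both total right and total left rejective; since each $\mathcal{C}_i$ is right (resp.\ left) rejective in all of $\proj A$, the monic (resp.\ epic) approximations restrict to witness that $\mathcal{C}_i$ is right (resp.\ left) rejective already in the intermediate category $\mathcal{C}_{i-1}$, so the chain is a rejective chain in the sense of Definition \ref{rejch}(1); the converse uses the noted fact that rejective chains are both total right and total left rejective.

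The main obstacle I anticipate is the passage between arbitrary chains and maximal ones. The equivalence (i) $\Leftrightarrow$ (ii) holds for a fixed chain of any length, whereas Proposition \ref{lem1} is phrased for maximal chains, so the ``if'' direction of the final statement really requires refining an arbitrary total right rejective (equivalently, right-strongly heredity) chain to a maximal one without destroying either defining property. Here the cosemisimplicity condition is exactly what makes refinement possible: by Lemma \ref{coss} each quotient $\mathcal{C}_{i-1}/[\mathcal{C}_i]$ is semisimple and the corresponding idempotent subalgebra $e_{i-1}(A/H_i)e_{i-1}$ has vanishing radical, so the ``new'' indecomposable summands can be split off one at a time; one must then check that each intermediate idempotent ideal stays projective on the relevant side, so that Proposition \ref{iy1} continues to apply at every inserted step. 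This refinement, the analogue for right-strongly heredity chains of the refinement of heredity chains to maximal ones, is the one genuinely nonformal ingredient; everything else is bookkeeping driven by Proposition \ref{iy1} and Lemma \ref{coss}.
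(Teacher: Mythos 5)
Your proof of the displayed equivalence (i) $\Leftrightarrow$ (ii) is exactly the paper's: Proposition \ref{iy1} translates condition (a) of Definition \ref{sthered}(1) into condition (a) of Definition \ref{rejch}(2), and Lemma \ref{coss} translates condition (b) into cosemisimplicity; the left-hand and two-sided cases follow as you say. Where you go beyond the paper is the ``in particular'' clause: the paper's proof stops after the term-by-term matching and is silent on how a total right rejective chain of arbitrary length yields right-strong quasi-heredity, whereas Proposition \ref{lem1} is stated only for maximal chains. The refinement issue you flag is therefore real and is the one point the paper glosses over; your sketch is in the right direction and can be closed purely on the categorical side, avoiding any projectivity check on intermediate ideals. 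Namely, if $\add e_{i+1}A\subseteq\add e'A\subseteq\add e_iA$ with the original chain total right rejective, then for any indecomposable $Y\in\add e_iA$ not lying in $\add e'A$ one has $[\add e'A](-,Y)=\mathcal{J}(-,Y)=[\add e_{i+1}A](-,Y)$ on $\add e_iA$ (the first equality because $Y\notin\add e'A$ and $\add e_{i+1}A$ is cosemisimple in $\add e_iA$, the second by cosemisimplicity again), so the monic right $(\add e_{i+1}A)$-approximation of $Y$ already serves as a monic right $(\add e'A)$-approximation; composing with a monic right $(\add e_iA)$-approximation of an arbitrary $X\in\proj A$ shows $\add e'A$ is right rejective in $\proj A$, and Proposition \ref{iy1} then gives projectivity of $Ae'A$, so the refined chain is again right-strongly heredity and Proposition \ref{lem1} applies. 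With that supplied, your argument is complete and, on the main equivalence, coincides with the paper's.
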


\begin{proof}
It follows from Proposition \ref{iy1} that $H_i \in \proj A$ if and only if $\add e_iA$ is a right rejective subcategory of $\proj A$.
Thus we have that $H_i$ satisfies the condition (a) in Definition \ref{sthered} (1) if and only if $\add e_iA$ satisfies the condition (a) in Definition \ref{rejch} (2).
From Lemma \ref{coss}, we have that $(H_i/H_{i+1})J(A/H_{i+1})(H_i/H_{i+1})=0$ holds if and only if $\add e_{i+1}A$ is a cosemisimple subcategory of $\add e_iA$. 
Thus we obtain that $H_i$ and $H_{i+1}$ satisfy the condition (b) in Definition \ref{sthered} (1) if and only if $\add e_iA$ and $\add e_{i+1}A$ satisfy the condition (b) in Definition \ref{rejch} (2).
\end{proof}

We apply Theorem \ref{thm1} to the following well-known result.

\begin{corollary} \label{Ri}
Let $A$ be an artin algebra. 

\begin{itemize}
\item[{\rm (1)}] {\rm (Iyama {\cite[Theorem 1.1]{I}})} For any $M \in \mod A$, there exists $N \in \mod A$ such that $\add N$ contains $M$ and has a total right rejective chain.

\item[{\rm (2)}] {\rm (Ringel {\cite[Theorem in \S 5]{R}})} There exists a right-strongly quasi-hereditary algebra $B$ and an idempotent $e$ of $B$ such that $A=e B e$.  
\end{itemize}
\end{corollary}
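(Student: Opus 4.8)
The plan is to treat (1) as the substantive input, supplied by Iyama, and to deduce (2) from (1) together with the final assertion of Theorem \ref{thm1}.

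First, statement (1) is exactly \cite[Theorem 1.1]{I}, so I would cite it; let me nonetheless recall the idea, since that is where the work lies. Starting from $M\oplus A$, Iyama enlarges the module in finitely many stages: given a module $N'$ already built (with $M\oplus A\in\add N'$), one adjoins the kernels of suitable minimal right $\add N'$-approximations, passing to a larger $N''$. Minimality of the approximations forces the relevant Loewy lengths to drop, so the process stabilises at some $N$ with $M\oplus A\in\add N$ for which $\add N$ is closed under the kernels produced. Filtering the finitely many indecomposables of $\add N$ by radical depth then yields a descending chain $\add N=\C_0\supset\C_1\supset\cdots\supset\C_n=0$ with $\C_i$ cosemisimple in $\C_{i-1}$, and the built-in closure under kernels guarantees that every object of $\add N$ admits a monic right $\C_i$-approximation, so each $\C_i$ is right rejective in $\add N$ and the chain is total right rejective.

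Granting (1), I would prove (2) as follows. Apply (1) with $M:=A$ to obtain $N\in\mod A$ with $A\in\add N$ and a total right rejective chain $\add N=\C_0\supset\cdots\supset\C_n=0$. Set $B:=\End_A(N)$ and let $F:=\Hom_A(N,-)$, which restricts to an equivalence $\add N\xrightarrow{\ \sim\ }\proj B$ onto the finitely generated projective right $B$-modules. An equivalence carries right adjoints to right adjoints, monomorphisms to monomorphisms and semisimple factor categories to semisimple factor categories, so $\proj B=F\C_0\supset F\C_1\supset\cdots\supset F\C_n=0$ is again a total right rejective chain of $\proj B$. By the last assertion of Theorem \ref{thm1}, $B$ is a right-strongly quasi-hereditary algebra. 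Finally, writing $e\in B$ for the idempotent projecting $N$ onto its summand $A$, one has $FA\cong eB$, whence $eBe\cong\End_B(eB)\cong\End_A(A)\cong A$; thus $A=eBe$, as required.

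The genuine obstacle is confined to (1): one must ensure both that the enlargement terminates and, more delicately, that the radical filtration is right rejective in the whole of $\add N$ rather than merely termwise — this \emph{uniform} (``total'') approximation property is precisely what the closure under the kernels of the chosen approximations secures, and it is the crux of Iyama's argument in \cite{I}. Against this, the deduction of (2) is purely formal: once Theorem \ref{thm1} is available it only remains to transport the chain along the equivalence $\add N\simeq\proj B$ and to read off the idempotent, the sole minor point being the ring-theoretic identification of $eBe$ with $A$.
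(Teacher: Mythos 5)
Your proposal is correct and takes essentially the same route as the paper: (1) is handled by citation to Iyama, and (2) is the formal deduction from (1) with $M=A$ by transporting the total right rejective chain along the equivalence $\add N\simeq\proj B$ for $B=\End_A(N)$, applying the last assertion of Theorem \ref{thm1}, and identifying $eBe\cong\End_A(A)\cong A$. The only divergence is cosmetic: your informal sketch of the construction behind (1) (iteratively adjoining kernels of minimal approximations) is not the one actually used --- the paper recalls Iyama's construction $M_0:=M$, $M_{i+1}:=J(\End_A(M_i))M_i$, $N:=\bigoplus_{k=0}^{n-1}M_k$, $\C_i:=\add(\bigoplus_{k= i}^{n-1}M_k)$ --- but since both you and the paper defer the proof that this yields a total right rejective chain to the cited references, this does not affect the argument.
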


\begin{proof}
(1) For the reader's convenience, we recall the construction.
Let $M_0:= M$ and $M_{i+1}:= J(\mathrm{End}_{A}(M_i)) M_i$ inductively.
We take the smallest $n>0$ such that $M_n=0$, and let $N:= \bigoplus_{k=0}^{n-1} M_k$ and $\mathcal{C}_i:= \add (\bigoplus_{k=i}^{n-1} M_k)$.
Then 
\[
\mathcal{C}_{0}\supset\mathcal{C}_{1}\supset\cdots\supset \mathcal{C}_{n}=0
\]
is a total right rejective chain {\cite[Lemma 2,2]{I}}, {\cite[Theorem 3.4.1]{I3}}.

(2) Applying (1) to $M=A$, we obtain that $N \in \mod A$ such that $B=\End_A(N)$ is right-strongly quasi-hereditary by Theorem \ref{thm1}.
Let $e \in B$ be the idempotent corresponding to the direct summand $A$ of $N$.
Then $eBe=A$ holds as desired.
\end{proof}

Another application of Theorem \ref{thm1} is the following.

\begin{example}
Let $A$ be an artin algebra. Then there exists the smallest $n>0$ such that $J(A)^n=0$.
Let $\mathcal{C}_i:= \add \bigoplus_{k=1}^{n-i} A/J(A)^k$ and $B:= \mathrm{End}_{A}(\bigoplus_{k=1}^{n} A/J(A)^k)$. 
Then $B$ has finite global dimension \cite{A}.
 Moreover $B$ is quasi-hereditary \cite{DR3}.
On the other hand, 
\[
\proj A \simeq \mathcal{C}_0 \supset \mathcal{C}_1 \supset \cdots \supset \mathcal{C}_n =0.
\]
is a total left rejective chain {\cite[Example 2.7.1]{I3}}. 
Thus we obtain from Theorem \ref{thm1} that $B$ is a left-strongly quasi-hereditary algebra. 
This was independently shown in \cite{{R}, {C}}.
\end{example}

We end this subsection with characterizations of cosemisimple right (resp.\ left) rejective subcategories.
The first one is crucial in the proof of Corollary \ref{Ri} (1).

\begin{proposition} [Iyama {\cite[1.5.1]{I2}}] \label{crrs}
Let $\mathcal{C}$ be a Krull--Schmidt category and $\mathcal{C}'$ a subcategory of $\mathcal{C}$.
Then $\mathcal{C}'$ is a cosemisimple right $($resp.\ left$)$ rejective subcategory of $\mathcal{C}$ if and only if, for any $X \in \ind \mathcal{C} \setminus \ind \mathcal{C}'$,
there exists a morphism $\varphi : Y \to X$ $($resp.\ $\varphi: X \to Y)$ such that $Y \in \mathcal{C}'$ and $\mathcal{C}(-, Y) \xrightarrow{\varphi \circ -} \mathcal{J}_{\mathcal{C}}(-, X)$ $($resp.\ $\mathcal{C}(Y, -) \xrightarrow{-\circ \varphi} \mathcal{J}_{\mathcal{C}}(X, -))$ is an isomorphism on $\mathcal{C}$. 
\end{proposition}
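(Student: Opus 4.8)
The plan is to reduce the whole statement to a comparison between the ideal $[\mathcal{C}'](-,X)$ and the radical $\mathcal{J}_{\mathcal{C}}(-,X)$, using two reformulations already available. First, the fact recorded just after the definition of cosemisimplicity: $\mathcal{C}'$ is cosemisimple in $\mathcal{C}$ if and only if $[\mathcal{C}'](-,X)=\mathcal{J}_{\mathcal{C}}(-,X)$ for every $X\in\ind\mathcal{C}\setminus\ind\mathcal{C}'$. Second, unwinding the definition of approximation: a morphism $\varphi\colon Y\to X$ with $Y\in\mathcal{C}'$ is a \emph{monic} right $\mathcal{C}'$-approximation precisely when the induced map $\mathcal{C}(-,Y)\xrightarrow{\varphi\circ-}[\mathcal{C}'](-,X)$ is an \emph{isomorphism} of functors on $\mathcal{C}$ (the approximation condition supplies the epimorphism, monicity the monomorphism). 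Since $\mathcal{C}$ is Krull--Schmidt, I would first treat indecomposable $X$ and then pass to arbitrary objects by direct sums.

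For the ``only if'' direction, assume $\mathcal{C}'$ is a cosemisimple right rejective subcategory and fix $X\in\ind\mathcal{C}\setminus\ind\mathcal{C}'$. Right rejectivity, via Proposition \ref{app}, supplies a monic right $\mathcal{C}'$-approximation $\varphi\colon Y\to X$, so by the second reformulation $\mathcal{C}(-,Y)\xrightarrow{\varphi\circ-}[\mathcal{C}'](-,X)$ is an isomorphism; cosemisimplicity identifies $[\mathcal{C}'](-,X)$ with $\mathcal{J}_{\mathcal{C}}(-,X)$, giving the required isomorphism $\mathcal{C}(-,Y)\xrightarrow{\varphi\circ-}\mathcal{J}_{\mathcal{C}}(-,X)$.

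For the converse, I would extract both conclusions from the single hypothesis at once. Fix $X\in\ind\mathcal{C}\setminus\ind\mathcal{C}'$ with the given $\varphi\colon Y\to X$. The key preliminary observation is the inclusion $[\mathcal{C}'](-,X)\subseteq\mathcal{J}_{\mathcal{C}}(-,X)$: any morphism into the indecomposable $X$ that factors through an object of $\mathcal{C}'$ cannot be an isomorphism, for otherwise $X$ would be a direct summand of an object of $\mathcal{C}'$ and hence lie in $\mathcal{C}'$, since $\mathcal{C}'$ is closed under summands, contradicting $X\notin\ind\mathcal{C}'$. As $\varphi\circ-$ factors through $Y\in\mathcal{C}'$, its image lies in $[\mathcal{C}'](-,X)$, so $\im(\varphi\circ-)\subseteq[\mathcal{C}'](-,X)\subseteq\mathcal{J}_{\mathcal{C}}(-,X)$; the hypothesis forces $\varphi\circ-$ to surject onto $\mathcal{J}_{\mathcal{C}}(-,X)$, whence all three coincide. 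This at once yields cosemisimplicity ($[\mathcal{C}'](-,X)=\mathcal{J}_{\mathcal{C}}(-,X)$) and shows $\mathcal{C}(-,Y)\xrightarrow{\varphi\circ-}[\mathcal{C}'](-,X)$ is an epimorphism, so $\varphi$ is a right $\mathcal{C}'$-approximation; it is monic because $\varphi\circ-$ is an isomorphism onto $\mathcal{J}_{\mathcal{C}}(-,X)$ followed by the inclusion into $\mathcal{C}(-,X)$, hence injective. For $X\in\ind\mathcal{C}'$ the identity is a monic right $\mathcal{C}'$-approximation, and for arbitrary $X$ I would take the direct sum of the approximations of its indecomposable summands, using that $[\mathcal{C}']$ and $\mathcal{J}_{\mathcal{C}}$ both decompose along direct sums in the target. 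By Proposition \ref{app} this establishes that $\mathcal{C}'$ is right rejective.

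The main obstacle I anticipate is bookkeeping rather than conceptual: reading off the two properties cleanly from the single isomorphism hypothesis, and checking that the passage from indecomposables to arbitrary objects preserves both the approximation property and monicity. The statement for left rejective subcategories follows formally by duality, applying the right version in the opposite category $\mathcal{C}^{\op}$, under which right rejective, cosemisimple, and the direction of $\varphi$ all dualize.
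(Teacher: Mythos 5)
Your proposal is correct and follows essentially the same route as the paper's proof: both directions reduce to the identification $[\mathcal{C}'](-,X)=\mathcal{J}_{\mathcal{C}}(-,X)$ for $X\in\ind\mathcal{C}\setminus\ind\mathcal{C}'$, with the ``if'' direction resting on the chain of inclusions $\mathcal{J}_{\mathcal{C}}(-,X)=\im(\varphi\circ-)\subseteq[\mathcal{C}'](-,X)\subseteq\mathcal{J}_{\mathcal{C}}(-,X)$, exactly as in the paper. You merely spell out the steps the paper leaves implicit (why $[\mathcal{C}'](-,X)\subseteq\mathcal{J}_{\mathcal{C}}(-,X)$ for indecomposable $X\notin\mathcal{C}'$, and the passage from indecomposables to arbitrary objects), which is fine.
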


\begin{proof}
We show the ``only if'' part.
For any $X \in \ind \mathcal{C} \setminus \ind \mathcal{C}'$, we take a morphism $\varphi : Y \to X$ such that $Y \in \mathcal{C}'$ and $\mathcal{C}(-, Y) \xrightarrow{\varphi \circ -} [\mathcal{C}'](-, X)$ is an isomorphism on $\mathcal{C}$.
This gives a desired morphism since cosemisimplicity of $\mathcal{C}'$ implies that $\mathcal{J}_\mathcal{C}(-, X) =[ \mathcal{C}'](-, X)$.

We show the ``if'' part.
It suffices to prove that $[\mathcal{C}'](-, X) = \mathcal{J}_{\mathcal{C}}(-, X)$ for any $X \in \ind \mathcal{C} \setminus \ind \mathcal{C}'$.
For any $X \in \ind \mathcal{C} \setminus \ind \mathcal{C}'$, we take a morphism $\varphi : Y \to X$ such that $Y \in \mathcal{C}'$ and $\mathcal{C}(-, Y) \xrightarrow{\varphi \circ -} \mathcal{J}_{\mathcal{C}}(-, X)$ is an isomorphism on $\mathcal{C}$. 
Then $\mathcal{J}_{\mathcal{C}}(-, X) \subseteq \mathrm{Im} (\varphi \circ -) \subseteq [\mathcal{C}'](-, X)$ holds.
Since $X \not\in \mathcal{C}'$, this clearly implies $\mathcal{J}_{\mathcal{C}}(-, X) = [\mathcal{C}'](-, X)$, and hence we have the assertion.
\end{proof}

The second one is a reformulation of Proposition \ref{crrs}.

\begin{proposition} [Iyama {\cite[Theorem 3.2(3)]{I2}}] \label{iy2} 
Let $A$ be a basic artin algebra and $e$ an idempotent of $A$. 
Then $\add e A$ is a cosemisimple right $($resp.\ left$)$ rejective subcategory of $\proj A$ if and only if $(1-e) J(A) \in \add e A$ as a right $A$-module $($resp.\ $J(A) (1-e) \in \add A e$ as a left $A$-module$)$. 
\end{proposition}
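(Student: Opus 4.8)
The plan is to apply the characterization of cosemisimple right rejective subcategories in Proposition \ref{crrs} to the concrete situation $\mathcal{C} = \proj A$ and $\mathcal{C}' = \add eA$, and to unwind the resulting approximation condition into a statement about radicals of indecomposable projectives. I would treat the right rejective case first and deduce the left case by duality at the end. Since $A$ is basic, I write $1-e = f_1 + \cdots + f_m$ as a sum of pairwise orthogonal primitive idempotents; then the indecomposable objects of $\proj A$ that do not lie in $\add eA$ are, up to isomorphism, exactly $f_1 A, \ldots, f_m A$. By Proposition \ref{crrs}, $\add eA$ is a cosemisimple right rejective subcategory of $\proj A$ if and only if for each $j$ there is a morphism $\varphi_j \colon Y_j \to f_j A$ with $Y_j \in \add eA$ such that $\Hom_A(-, Y_j) \xrightarrow{\varphi_j \circ -} \mathcal{J}_{\proj A}(-, f_j A)$ is an isomorphism of functors on $\proj A$. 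The task is to show that this condition is equivalent to $f_j J(A) \in \add eA$.

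The key step is to compute the radical functor. For a projective $P$ and the indecomposable projective $f_j A$, a homomorphism $P \to f_j A$ lies in $\mathcal{J}_{\proj A}(P, f_j A)$ precisely when its image is contained in $\rad(f_j A) = f_j J(A)$; hence the inclusion $f_j J(A) \hookrightarrow f_j A$ induces a natural isomorphism $\Hom_A(-, f_j J(A)) \cong \mathcal{J}_{\proj A}(-, f_j A)$ on $\proj A$. Now any $\varphi_j$ as above is itself a radical morphism (it is the image of $\id_{Y_j}$ under the natural transformation), so it factors as $Y_j \xrightarrow{\bar\varphi_j} f_j J(A) \hookrightarrow f_j A$, and the functor morphism in question becomes $\bar\varphi_j \circ -$. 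Evaluating the natural transformation $\Hom_A(-, Y_j) \to \Hom_A(-, f_j J(A))$ at the object $A \in \proj A$ and using $\Hom_A(A, -) = \id$ shows that it is an isomorphism if and only if $\bar\varphi_j \colon Y_j \to f_j J(A)$ is an isomorphism. Thus the approximation condition for $f_j A$ holds if and only if $f_j J(A) \cong Y_j$ is projective and lies in $\add eA$; conversely, when $f_j J(A) \in \add eA$ one simply takes $Y_j = f_j J(A)$ with $\varphi_j$ the inclusion.

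Finally, I assemble these conditions over $j$. Because $1-e = \sum_j f_j$ with the $f_j$ orthogonal, one has $(1-e) J(A) = \bigoplus_j f_j J(A)$ as right $A$-modules, and since $\add eA$ is closed under direct sums and direct summands, $(1-e) J(A) \in \add eA$ if and only if $f_j J(A) \in \add eA$ for every $j$; this yields the claimed equivalence for right rejective subcategories. The left rejective case follows by applying the right case to $A^{\op}$ through the duality $\Hom_A(-, A) \colon \proj A \to \proj A^{\op}$ used in the proof of Proposition \ref{iy1}, which interchanges left and right rejectivity, preserves cosemisimplicity, and sends $eA$ to $Ae$ and $(1-e)J(A)$ to $J(A)(1-e)$; the only care needed there is the bookkeeping of sides. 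I expect the main obstacle to be the radical computation of the middle paragraph, namely correctly matching the categorical radical $\mathcal{J}_{\proj A}$ with the module radical $\rad(f_j A)$ and recognizing that the approximation provided by Proposition \ref{crrs} is forced to be the inclusion $\rad(f_j A) \hookrightarrow f_j A$, so that cosemisimple right rejectivity amounts exactly to the projectivity of each $\rad(f_j A)$ together with its membership in $\add eA$.
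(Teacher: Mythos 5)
Your proof is correct and follows essentially the same route as the paper: both apply Proposition \ref{crrs} with $\mathcal{C}=\proj A$, $\mathcal{C}'=\add eA$, evaluate the resulting natural transformation at the object $A$, and identify $\mathcal{J}_{\proj A}(A,(1-e)A)$ with $(1-e)J(A)$ to conclude $(1-e)J(A)\cong Y\in\add eA$. The only difference is that you spell out the reduction to primitive idempotents, the identification $\mathcal{J}_{\proj A}(-,f_jA)\cong\Hom_A(-,f_jJ(A))$, and the duality for the left-hand case, all of which the paper leaves implicit.
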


\begin{proof}
Applying Proposition \ref{crrs} to $\mathcal{C}:= \proj A$ and $\mathcal{C}':= \add e A$, we have that $\mathcal{C}'$ is a cosemisimple right rejective subcategory of $\mathcal{C}$ if and only if there exists a morphism $\varphi : Y \to (1-e) A$ with $Y \in \mathcal{C}'$ such that 
\[
Y \cong \mathcal{C}(A, Y) \xrightarrow{\varphi \circ -} \mathcal{J}_{\mathcal{C}}(A, (1-e) A )= (1-e) J(A)
\]
is an isomorphism.
This means that $(1-e) J(A) \in \mathcal{C}'$ holds.
\end{proof}

\subsection{Coreflective subcategories}

In this subsection, we study a weaker notion of right (resp.\ left) rejective subcategories called coreflective (resp.\ reflective) subcategories.
They appeared in the classical theory of localizations of abelian categories \cite{St}. 
Let us start with recalling their definitions.

\begin{definition}[Cf.\ Stenstr\"om \cite{St}] \label{refsub}
Let $\mathcal{C}$ be an additive category and $\mathcal{C}'$ a subcategory of $\mathcal{C}$.
We call $\mathcal{C}'$ a \emph{coreflective} (resp.\ \emph{reflective}) subcategory of $\mathcal{C}$ if the inclusion functor $\mathcal{C}'\hookrightarrow\mathcal{C}$
admits a right (resp.\ left) adjoint.
\end{definition}

Clearly right (resp.\ left) rejective subcategories are coreflective (resp.\ reflective).
The following proposition is an analogue of Proposition \ref{app}. 

\begin{proposition} \label{app2}
Let $\mathcal{C}$ be an additive category and $\mathcal{C}'$ a subcategory of $\mathcal{C}$.
Then the following conditions are equivalent:

\begin{itemize}
\item[{\rm (i)}] $\mathcal{C}'$ is a coreflective $($resp.\ reflective$)$ subcategory of $\mathcal{C}$.

\item[{\rm (ii)}] For any $X\in\mathcal{C}$, there exists a right $($resp.\ left$)$ $\mathcal{C}'$-approximation $f_X \in \mathcal{C}\left(Y,X\right)$ $($resp.\ $f^X \in \mathcal{C}\left(X,Y\right))$ of $X$ such that 
$
\mathcal{C}(-, Y) \xrightarrow{f_X \circ -} \mathcal{C}(-, X) \; \mathrm{(resp.\;} \mathcal{C}(Y,-) \xrightarrow{- \circ f^X} \mathcal{C}(X,-) )
$
is an isomorphism on $\mathcal{C}'$.
\end{itemize}
\end{proposition}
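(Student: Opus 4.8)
The plan is to mimic the proof of Proposition \ref{app} almost verbatim, replacing the monomorphism hypothesis used there by the stronger requirement that $\mathcal{C}(-, Y) \xrightarrow{f_X \circ -} \mathcal{C}(-, X)$ be an \emph{isomorphism} (not merely an epimorphism) on $\mathcal{C}'$. This is precisely the condition that upgrades the right adjoint obtained in the rejective setting to one in the weaker coreflective setting, so the two proofs run in parallel with this single substitution.

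For (i) $\Rightarrow$ (ii) I would argue as follows. Suppose the inclusion $F : \mathcal{C}' \hookrightarrow \mathcal{C}$ admits a right adjoint $G$ with counit $\varepsilon$. For $X \in \mathcal{C}$, put $Y := G(X)$ and $f_X := \varepsilon_X : G(X) \to X$. Exactly as in Proposition \ref{app}, $\varepsilon_X$ is a right $\mathcal{C}'$-approximation of $X$. The adjunction furnishes a natural bijection $\mathcal{C}'(Z, G(X)) \cong \mathcal{C}(FZ, X)$ for $Z \in \mathcal{C}'$, given by $\phi \mapsto \varepsilon_X \circ F\phi$; since $F$ is the inclusion, $FZ = Z$ and $F\phi = \phi$, so this bijection is precisely $\mathcal{C}(Z, G(X)) \xrightarrow{\varepsilon_X \circ -} \mathcal{C}(Z, X)$. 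Hence $\mathcal{C}(-, Y) \xrightarrow{f_X \circ -} \mathcal{C}(-, X)$ is an isomorphism on $\mathcal{C}'$, which is (ii). The reflective case is dual.

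For the converse (ii) $\Rightarrow$ (i) I would choose, for each $X \in \mathcal{C}$, a right $\mathcal{C}'$-approximation $f_X : C_X \to X$ satisfying the isomorphism condition, and set $G(X) := C_X$. Given $\varphi \in \mathcal{C}(X, Y)$, the morphism $\varphi \circ f_X \in \mathcal{C}(C_X, Y)$ lies in the image of the isomorphism $\mathcal{C}(C_X, C_Y) \xrightarrow{f_Y \circ -} \mathcal{C}(C_X, Y)$ (here using $C_X \in \mathcal{C}'$), so there is a \emph{unique} $C_\varphi : C_X \to C_Y$ with $f_Y \circ C_\varphi = \varphi \circ f_X$, giving the commutative square of Proposition \ref{app}. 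Setting $G(\varphi) := C_\varphi$ then defines a functor, and the isomorphism condition itself supplies the natural bijection $\mathcal{C}(Z, G(X)) \cong \mathcal{C}(Z, X)$ for $Z \in \mathcal{C}'$ exhibiting $G$ as right adjoint to $F$ with $f$ as counit. The main point requiring care — the analogue of the key step in Proposition \ref{app} — is that the isomorphism hypothesis, rather than monicity of $f_Y$, now forces both the existence and the uniqueness of the lift $C_\varphi$; once this is settled, functoriality of $G$ and the remaining naturality checks for the adjunction are routine and identical to the rejective case.
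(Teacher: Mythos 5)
Your proof is correct and follows exactly the route the paper intends: the paper omits the proof of Proposition \ref{app2} with the remark that it is ``similar to Proposition \ref{app}'', and your argument is precisely that adaptation, with the isomorphism condition on $\mathcal{C}'$ correctly identified as the adjunction bijection in one direction and as the source of both existence and uniqueness of the lift $C_\varphi$ in the other. No gaps.
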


We omit the proof since it is similar to Proposition \ref{app}.

The following proposition is an analogue of Proposition \ref{iy1}.

\begin{proposition} [Iyama {\cite[Theorem 3.2 (1)]{I2}}] \label{iy3}
Let $A$ be an artin algebra and $e$ an idempotent of $A$. 
Then $\add e A$ is a coreflective $($resp.\ reflective$)$ subcategory of $\proj A$ if and only if $A e$ $($resp.\ $e A)$ is a projective right $($resp.\ left$)$ $eA e$-module.
\end{proposition}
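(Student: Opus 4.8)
The plan is to mimic the proof of Proposition \ref{iy1}, replacing the monic right approximations used there by the weaker isomorphism-on-$\add eA$ condition furnished by Proposition \ref{app2}. First I would reduce to the coreflective statement: exactly as in the last paragraph of the proof of Proposition \ref{iy1}, the duality $\Hom_A(-,A) : \proj A \to \proj A^{\op}$ identifies $\add eA$ with $\add eA^{\op}$ and turns left adjoints into right adjoints, so $\add eA$ is reflective in $\proj A$ if and only if $\add eA^{\op}$ is coreflective in $\proj A^{\op}$. Since $A^{\op}e$ viewed as a right $(eA^{\op}e)$-module is exactly $eA$ viewed as a left $eAe$-module, the reflective case would then follow from the coreflective case applied to $A^{\op}$, and so it suffices to treat coreflectivity.

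For the coreflective case I would invoke Proposition \ref{app2}: $\add eA$ is coreflective in $\proj A$ if and only if every $X \in \proj A$ admits a right $(\add eA)$-approximation $f_X : Y \to X$ for which $\Hom_A(-, Y) \xrightarrow{f_X \circ -} \Hom_A(-, X)$ is an isomorphism on $\add eA$. Applying $\Hom_A(eA,-) = (-)e$ and noting that every object of $\add eA$ is a summand of some $(eA)^m$, this last condition is precisely that $Ye \to Xe$ is an isomorphism of right $eAe$-modules.

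For the ``if'' direction, assume $Ae$ is projective over $eAe$ and take $f_X$ to be the counit $\varepsilon_X : Xe \otimes_{eAe} eA \to X$ of the adjunction $(-\otimes_{eAe} eA) \dashv \Hom_A(eA,-)$. Since $X \in \add A$, the module $Xe$ is a summand of some $(Ae)^m$ as a right $eAe$-module, hence projective over $eAe$ by hypothesis; therefore $Y := Xe \otimes_{eAe} eA \in \add(eAe \otimes_{eAe} eA) = \add eA$. The triangle identity, together with the fact that the unit of this adjunction is an isomorphism (as $N \otimes_{eAe} eAe \cong N$ for any right $eAe$-module $N$), shows that $\Hom_A(eA, \varepsilon_X)$ is an isomorphism, which is the required condition. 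For the ``only if'' direction, I would apply Proposition \ref{app2} to $X = A$ to obtain $f_A : Y \to A$ with $Y \in \add eA$ and $Ye \cong Ae$ as right $eAe$-modules; since $Y$ is a summand of some $(eA)^m$, the module $Ye = \Hom_A(eA, Y)$ is a summand of $\Hom_A(eA, eA)^m = (eAe)^m$, so $Ae \cong Ye$ is projective over $eAe$.

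The main obstacle is the ``if'' direction, specifically the verification that $Y = Xe \otimes_{eAe} eA$ lands in $\add eA$: this is exactly the point where projectivity of $Ae$ over $eAe$ is used, in contrast with Proposition \ref{iy1}, where one instead used projectivity of $AeA$ over $A$ to place $AeA \in \add eA$. A secondary point requiring care is the bookkeeping of the left and right $eAe$-module structures through the duality in the reduction to the coreflective case. Because the approximations here need not be monic, no argument beyond Proposition \ref{app2} and the unit--counit formalism is needed, which is what makes this weaker than the rejective situation.
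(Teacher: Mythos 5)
Your proposal is correct and follows essentially the same route as the paper: in both directions one applies the functor $\Hom_A(eA,-)=(-)e$ to a right $(\add eA)$-approximation and identifies coreflectivity with $Ae\in\proj(eAe)$. The only difference is presentational — where you realize the approximation of $X$ as the counit $Xe\otimes_{eAe}eA\to X$ of the tensor--hom adjunction, the paper picks $P\in\add eA$ with $Pe\cong Ae$ and lifts this isomorphism to a morphism $P\to A$ via $\Hom_A(P,A)=\Hom_{eAe}(Pe,Ae)$, which is the same construction in different clothing.
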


\begin{proof}
Assume that $\add e A$ is a coreflective subcategory of $\proj A$.
Then there exists a right $(\add e A)$-approximation $a \in \Hom_A (P, A)$ of $A$ such that 
\[
\Hom_{A}(e A, P) \xrightarrow{a \circ -} \Hom_{A}(e A, A)
\]
is an isomorphism. 
Thus we have an isomorphism $A e \cong Pe \in \add (e A e)$ of right $(e A e)$-modules and we obtain $A e \in \proj (e A e)$.
 
Conversely, we assume that $A e$ is projective as right $(e A e)$-modules.
Then there exists $P \in \add e A$ as a right $A$-module such that $Pe \cong A e$ as right $(eA e)$-modules. 
This is induced by a morphism $a: P \to A$ since $\Hom_A(P, A) = \Hom_{eAe} (Pe, Ae)$ (see {\cite[Proposition 2.1 (a)]{ARS}}).
Since
\[
\Hom_{A}(e A, P) \xrightarrow{a \circ -} \Hom_{A}(e A, A)
\]
is an isomorphism, $\add e A$ is coreflective in $\proj A$. 
\end{proof}

Right (resp.\ left) rejective subcategories are coreflective (resp.\ reflective) subcategories, but the converse is not true as the following example shows.

\begin{example}
Let $A$ be the preprojective algebra of type $\mathbb{A}_3$.
It is defined by the quiver
\[
\xymatrix@C=25pt@R=15pt{1\ar@<2pt>[r]^{\alpha_1} & 2\ar@<2pt>[l]^{\beta_1} \ar@<2pt>[r]^{\alpha_2} &3\ar@<2pt>[l]^{\beta_2}
 }
 \]
with relations $\alpha_1 \beta_1, \beta_1 \alpha_1- \alpha_2 \beta_2, \beta_2 \alpha_2$.
Then $A e_3A$ is not projective as a right $A$-module, but $Ae_3$ is projective as a right $e_3 A e_3$-module.
Thus $\add e_3 A$ is not a right rejective subcategory of $\proj A$ by Proposition \ref{iy1}, but a coreflective subcategory of $\proj A$ by Proposition \ref{iy3}.
\end{example}

We introduce the following analogue of Definition \ref{rejch}.

\begin{definition} \label{refch}
Let $\mathcal{C}$ be an additive category. 
We call a chain of subcategories
\[
\mathcal{C}=\mathcal{C}_{0}\supset \mathcal{C}_1 \supset \cdots \supset \mathcal{C}_{n}=0
\]
a \emph{coreflective $($resp.\ reflective$)$ chain} if $\mathcal{C}_{i}$ is a cosemisimple coreflective $($resp.\ reflective$)$ subcategory of $\mathcal{C}_{i-1}$ for $1 \leq i \leq n$.
\end{definition}

Clearly right (resp.\ left) rejective chains are coreflective (resp.\ reflective) chains.
The converse is not true as the following example shows.

\begin{example}
Let $A$ be the preprojective algebra of type $\mathbb{A}_2$.  
It is defined by the quiver
\[
\xymatrix@C=25pt@R=15pt{1\ar@<2pt>[r]^{\alpha} & 2\ar@<2pt>[l]^{\beta}
 }
 \]
with relations $\beta \alpha, \alpha \beta$. 
Then
\[
\proj A =\add A \supset \add e_2A  \supset 0 
\]
is not a right rejective chain, but a coreflective chain of $\proj A$.
In fact, the conditions (a) and (b) in Definition \ref{refch} follow from Proposition \ref{iy3} and $e_1 J(A) e_1 =0$ respectively. 
However the condition (a) in Definition \ref{rejch} dose not hold by Proposition \ref{iy1}.
\end{example}

We are ready to state the following main result in this paper.

\begin{theorem} \label{thm2}
In Theorem \ref{thm1}, the conditions $(\mathrm{i})$ and $(\mathrm{ii})$ are equivalent to the following condition.

\begin{itemize}
\item[{\rm (iii)}] \eqref{thm} is a heredity chain of $A$ and the following chain is a coreflective $($resp.\ reflective$)$ chain of $\proj A$. 
\[
\proj A=\add e_0 A \supset \add e_1 A \supset \cdots \supset \add e_n A =0.
\]
\end{itemize}
\end{theorem}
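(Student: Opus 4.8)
The plan is to prove (iii)$\Leftrightarrow$(ii), which together with Theorem \ref{thm1} (giving (i)$\Leftrightarrow$(ii)) yields the claim. The cosemisimplicity required at each step of a coreflective chain (Definition \ref{refch}) coincides with condition (b) of a total right rejective chain (Definition \ref{rejch}(2)), and also with condition (c) in the heredity ideal $H_i/H_{i+1}\subseteq A/H_{i+1}$; so this part is common to (i), (ii) and (iii), and the genuine content is to compare, under the standing hypothesis that \eqref{thm} is a heredity chain, the requirement that each $\add e_iA$ be right rejective in $\proj A$ with the weaker requirement that it be merely coreflective. By Propositions \ref{iy1} and \ref{iy3} these translate respectively into the statements that $H_i=Ae_iA$ is projective as a right $A$-module and that $Ae_i$ is projective as a right $e_iAe_i$-module, the former being a priori the stronger by Lemma \ref{idempi}(3).

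For the easy implication (ii)$\Rightarrow$(iii), I would first pass from (ii) to (i) by Theorem \ref{thm1} and then apply Proposition \ref{rshchc} to obtain that \eqref{thm} is a heredity chain, which is the first half of (iii). For the coreflective chain, recall from Proposition \ref{app} that a right rejective subcategory of $\proj A$ admits monic right approximations. Given $X\in\add e_{i-1}A$, such an approximation $f\colon Y\to X$ has $Y\in\add e_iA\subseteq\add e_{i-1}A$, so $f$ is a morphism of $\add e_{i-1}A$ and stays a right $\add e_iA$-approximation there; being monic, it induces an isomorphism on the $\Hom$-functors restricted to $\add e_iA$, whence $\add e_iA$ is coreflective in $\add e_{i-1}A$ by Proposition \ref{app2}. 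Combined with the common cosemisimplicity, this gives the coreflective chain.

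For the hard implication I would prove (iii)$\Rightarrow$(i) and then invoke Theorem \ref{thm1}. Since right adjoints compose, a coreflective chain makes each $\add e_iA$ coreflective in all of $\proj A$, so Proposition \ref{iy3} yields that $Ae_i$ is projective over $e_iAe_i$; consequently $Ae_i\otimes_{e_iAe_i}e_iA$ is a direct summand of a finite direct sum of copies of $e_iA$ and hence lies in $\proj A$. The multiplication map $\mu_i\colon Ae_i\otimes_{e_iAe_i}e_iA\to Ae_iA=H_i$ is always surjective, so once $\mu_i$ is shown to be an isomorphism we get $H_i\in\proj A$, which is condition (a) of a right-strongly heredity chain; condition (b) is exactly condition (c) in the definition of the heredity ideal $H_i/H_{i+1}$ of $A/H_{i+1}$ and therefore holds because \eqref{thm} is a heredity chain. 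Thus \eqref{thm} is a right-strongly heredity chain.

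The main obstacle is precisely showing that the heredity chain hypothesis forces each $\mu_i$ to be an isomorphism; this is the step at which mere coreflectivity fails, as the preprojective algebra of type $\mathbb{A}_2$ shows (it gives a coreflective but non-rejective chain and admits no heredity chain, being self-injective). I expect to establish the isomorphism by downward induction on $i$. For $i=n-1$ the ideal $H_{n-1}$ is a genuine heredity ideal of $A=A/H_n$, whose multiplication map is an isomorphism by the classical structure theory of heredity ideals. For the inductive step I would pass to $\bar A=A/H_{i+1}$, where $H_i/H_{i+1}$ is a heredity ideal and hence the associated map $\bar\mu_i$ is an isomorphism, and then lift this back to $A$ by a change-of-rings comparison: using that $H_{i+1}$ is already projective over $A$ (the inductive hypothesis, via Lemma \ref{lem0}) and that $Ae_i$ is projective, hence flat, over $e_iAe_i$ so that $-\otimes_{e_iAe_i}e_iA$ preserves the relevant short exact sequences. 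Keeping careful track of the nested idempotents $e_{i+1}\le e_i$ and of the identifications $e_iH_{i+1}e_i=e_iAe_{i+1}Ae_i$ in this lifting is the technical heart of the argument.
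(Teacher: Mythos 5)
Your overall architecture is close to the paper's: the easy direction is handled the same way, and for (iii)$\Rightarrow$(i) both you and the paper reduce modulo an ideal where the heredity property gives the multiplication isomorphism, then lift using projectivity of $Ae_i$ over $e_iAe_i$ plus a $\mathrm{Tor}_2$-vanishing (the paper's Lemma \ref{lem3} is essentially a projective-cover version of your $\mu_i$ argument). However, your inductive step has a genuine gap at exactly the point you flag as the technical heart. Unwinding your plan: with $T=Ae_i\otimes_{e_iAe_i}e_iA$ and $K=\ker\mu_i$, applying $(-)e_i$ gives $Ke_i=0$, hence $KH_{i+1}\subseteq KH_i=0$, and tensoring $0\to K\to T\to H_i\to 0$ with $A/H_{i+1}$ (using that $T\in\proj A$ and that $\bar\mu_i$ is an isomorphism) yields $K\cong\mathrm{Tor}^A_1(H_i,A/H_{i+1})=\mathrm{Tor}^A_2(A/H_i,A/H_{i+1})$. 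So everything hinges on this $\mathrm{Tor}_2$ vanishing, and the ingredients you propose do not deliver it: this group equals $\ker(H_i\otimes_A H_{i+1}\to H_{i+1})$, which vanishes if $H_{i+1}$ is projective (flat) as a \emph{left} $A$-module, whereas your inductive hypothesis only gives projectivity as a \emph{right} $A$-module --- and in the right-strongly setting left projectivity genuinely fails. The vanishing is true, but it is the Cline--Parshall--Scott fact that a heredity chain induces $\mathrm{Tor}^A_*=\mathrm{Tor}^{A/H_{i+1}}_*$ on $\mod A/H_{i+1}$, which itself requires an induction along the tail of the chain; it is not an off-the-shelf change of rings.

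The paper avoids this by organizing the induction differently: it inducts on the length $n$ of the chain and always quotients by the \emph{last} ideal $H_{n-1}$, which is a genuine heredity ideal of $A$ itself, so that $\mathrm{Tor}^A_2(M,A/H_{n-1})=\mathrm{Tor}^{A/H_{n-1}}_2(M,A/H_{n-1})=0$ for $M\in\mod A/H_{n-1}$ is immediate; the induction hypothesis applied to $A/H_{n-1}$ (after checking its chain is again coreflective) gives projectivity of every $H_i/H_{n-1}$ at once, and Lemma \ref{lem3} lifts this to projectivity of each $H_i$ over $A$. If you restructure your downward induction on $i$ into an induction on the chain length, always stripping off $H_{n-1}$ --- or first prove the CPS embedding property for heredity chains --- your argument closes.
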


To prove Theorem \ref{thm2}, we need the following lemma.

\begin{lemma} \label{lem3}
Let $A$ be an artin algebra and $I' \subset I$ idempotent ideals of $A$. 
Let $e$ and $e'$ be idempotents of $A$ such that $I=AeA$ and $I'=Ae'A$.
We assume that $I/I'$ is a projective right $($resp.\ left$)$ $(A/I')$-module and $\mathrm{Tor}^A_2(A/I, A/I')=0$ $($resp.\ $\mathrm{Tor}^A_2(A/I', A/I)=0)$.
If $Ae$ $($resp.\ $eA)$ is a projective right $($resp.\ left$)$ $(eAe)$-module, then $I$ is a projective right $($resp.\ left$)$ $A$-module.  
\end{lemma}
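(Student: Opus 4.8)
The plan is to prove the right-module statement; the left version then follows by applying it to $A^{\op}$. Throughout write $\bar A := A/I'$ and let $\bar e$ be the image of $e$ in $\bar A$. The idea is to realize $I$ as the image of the canonical multiplication map out of a ``relative tensor'' module and to show that this map is an isomorphism.

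First I would set $P := Ae \otimes_{eAe} eA$ and let $\mu \colon P \to AeA = I$ be the multiplication map, which is surjective; put $K := \ker \mu$. Since $Ae$ is projective over $eAe$ by hypothesis, $Ae$ is a summand of $(eAe)^k$, so $P$ is a direct summand of $(eA)^k$; hence $P \in \add eA$ and $P$ is a projective right $A$-module. Applying the exact functor $\Hom_A(eA,-) = (-)e$ to $0 \to K \to P \xrightarrow{\mu} I \to 0$, and using the identifications $Pe = Ae = Ie$ under which $\mu e$ becomes the identity, I obtain $Ke = 0$. The crucial consequence is that $KI = K \cdot AeA = 0$ (because $\kappa(aeb) = ((\kappa a)e)b$ and $\kappa a \in K$), so $K$ is a right $(A/I)$-module; in particular $KI' = 0$, as $I' \subseteq I$.

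Next I would bring in the two homological hypotheses. From $0 \to I \to A \to A/I \to 0$ and $\mathrm{Tor}^A_2(A/I, A/I') = 0$ one gets $\mathrm{Tor}^A_1(I, A/I') = 0$. Tensoring $0 \to K \to P \to I \to 0$ with $A/I'$ over $A$, and using that $P$ is flat, that $KI' = 0$, and that $II' = I'$ (so $I \otimes_A \bar A = I/I'$), yields a short exact sequence $0 \to K \to P/PI' \to I/I' \to 0$ of right $\bar A$-modules. Since $I/I'$ is projective over $\bar A$ by hypothesis, this sequence splits, so $K$ is a direct summand of $P/PI'$. Because $P \in \add eA$ and $eA \otimes_A \bar A \cong \bar e \bar A$ (which rests on the identity $eA \cap I' = eAe'A$), I get $P/PI' \in \add \bar e \bar A$, and hence $K \in \add \bar e \bar A$.

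Finally I would combine the two facts about $K$: it lies in $\add \bar e \bar A$ yet satisfies $K \bar e = Ke = 0$. Decomposing $\bar e$ into primitive orthogonal idempotents of $\bar A$ shows that every nonzero object of $\add \bar e \bar A$ is nonzero after applying $(-)\bar e$, so $K = 0$; thus $\mu$ is an isomorphism and $I \cong P$ is projective over $A$. The step I expect to be the main obstacle is precisely the passage from information modulo $I'$ back to $I$ itself: a priori the hypotheses control only $I/I'$ and a $\mathrm{Tor}$-group living over $\bar A$, whereas the conclusion is projectivity over $A$. The device that resolves this is the observation $Ke = 0$, which simultaneously makes $K$ an $(A/I)$-module — so that $KI' = 0$ and $K$ survives unchanged under $-\otimes_A \bar A$ — and records $K\bar e = 0$, whose incompatibility with $K \in \add \bar e \bar A$ is what forces $K = 0$. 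I would still want to verify carefully the two ring-theoretic identities $II' = I'$ and $eA \cap I' = eAe'A$, since these are what make the reductions legitimate.
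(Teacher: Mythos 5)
Your argument is correct and follows essentially the same route as the paper: both form a short exact sequence $0 \to K \to P \to I \to 0$ with $P \in \add eA$ projective, deduce $Ke=0$ from the projectivity of $Ae$ over $eAe$, tensor with $A/I'$ using $\mathrm{Tor}^A_2(A/I,A/I')=0$ and the projectivity of $I/I'$ to split off $K$ modulo $I'$, and then kill $K$. The only cosmetic differences are that the paper takes $P\to I$ to be a projective cover (so $K\subseteq PJ(A)$ and $K/KI'$ dies as a direct summand of $P/PI'$ contained in its radical, after which $K=KI'\subseteq KI=0$), whereas you take the multiplication map $Ae\otimes_{eAe}eA\to I$ and finish with the support argument that $K\in\add \bar{e}\bar{A}$ together with $K\bar{e}=Ke=0$ forces $K=0$.
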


\begin{proof}
Let $0 \to K \to P \to I \to 0$ be a projective cover of the right $A$-module $I$.
Then $P \in \add eA$ as a right $A$-module and $K \subset P J(A)$ hold.
Applying the functor $(-)e : \mod A \to \mod eAe$, we have a short exact sequence $0 \to Ke \to Pe \to Ie \to 0$.
Since $Ie=Ae$ and $Pe$ are projective $(eAe)$-modules and $Ke \subset Pe J(eAe)$, we have $Ke=0$.

On the other hand, applying the functor $- \otimes_A (A/I')$ to the short exact sequence $0 \to K \to P \to I$, we have an exact sequence 
\[
\mathrm{Tor}_1^A(I, A/I') \to K/KI' \to P/PI' \to I/I' \to 0,
\]
where $\mathrm{Tor}_1^A(I, A/I')= \mathrm{Tor}_2^A(A/I, A/I')=0$ holds by our assumption.
Since $I/I'$ is a projective right $(A/I')$-module, the sequence splits, and hence $K/KI'$ is a direct summand of $P/PI'$.
On the other hand, $K \subset PJ(A)$ implies that $K/KI' \subset (P/PI') J(A)$. 
Thus $K/KI'=0$ holds.
Consequently, $K=KI' \subset KI=0$ holds as desired.
\end{proof}

We are ready to prove Theorem \ref{thm2}.

\begin{proof} [Proof of Theorem \ref{thm2}]
Since (ii) $\Rightarrow$ (iii) clearly holds, it suffices for us to prove that (iii) $\Rightarrow$ (i).
We show this claim by induction on $n$. 
If $n=1$, then the assertion holds since $H_0=A$ is projective as a right $A$-module.

For $n \geq 2$ we proceed by induction. 
Let $e_i$ denote the idempotent $e_i +H_{n-1}$ of $A/H_{n-1}$ for $0 \leq i \leq n-2$.  
Firstly, we claim that 
\begin{equation*}
A/H_{n-1}> \cdots >(A/H_{n-1})e_i (A/H_{n-1})  >\cdots >H_{n-1}/H_{n-1}=0
\end{equation*}
is a heredity chain of $A/H_{n-1}$ such that $\add e_i (A/H_{n-1})$ is a coreflective subcategory of $\proj (A/H_{n-1})$ for $0 \leq i \leq n-2$.
Since $(A/H_{n-1}) e_i (A/H_{n-1}) = H_i/H_{n-1}$ for $0 \leq i \leq n-2$, the above chain is a heredity chain of $A/H_{n-1}$.
Since $Ae_i \in \proj (e_i A e_i)$, we have that $Ae_i \otimes_{e_iAe_i} e_i(A/H_{n-1})e_i=(A/H_{n-1})e_i$ is projective as a right $(e_i (A/H_{n-1}) e_i)$-module.
Therefore it follows from Proposition \ref{iy3} that $\add e_i (A/H_{n-1})$ is a coreflective subcategory of $\proj (A/H_{n-1})$ for $0 \leq i \leq n-2$.

Now, we deduce from the induction hypothesis that $H_i/H_{n-1}$ is a projective module as a right $(A/H_{n-1})$-module for $0 \leq i \leq n-2$.
For any $0 \leq i \leq n-1$, we obtain from the hypothesis (iii) that $\add e_i A$ is a coreflective subcategory of $\proj A$, and hence $A e_i$ is a projective right $(e_i A e_i)$-module by Proposition \ref{iy3}. 
Thus we have idempotent ideals $H_{n-1}, H_i$ such that $H_{n-1}$ is a heredity ideal of $A$, $H_i/H_{n-1}$ is projective as a right $(A/H_{n-1})$-module and $A e_i$ is a projective right $(e_iAe_i)$-module for $0 \leq i \leq n-1$. 
Moreover $\mathrm{Tor}^A_2(A/H_i, A/H_{n-1})=\mathrm{Tor}^{A/H_{n-1}}_2(A/H_i, A/H_{n-1})=0$ holds since $H_{n-1}$ is a heredity ideal of $A$.
Therefore we deduce from Lemma \ref{lem3} that $H_i$ is a projective right $A$-modules.
\end{proof}

\section{Algebras of global dimension at most two}

\subsection{Algebras of global dimension at most two are right-strongly quasi-hereditary}

The aim of this subsection is to prove the following result.

\begin{theorem} \label{gl}
Let $A$ be an artin algebra such that $\gl A \leq2$.
Then the following statements hold.

\begin{itemize}
\item[{\rm (1)}] $A$ is a right-strongly quasi-hereditary algebra.

\item[{\rm (2)}] {\rm (Iyama {\cite[Theorem 3.6]{I2}}).}
The category $\proj A$ has a total right rejective chain
\[
\proj A=\add e_0 A \supset \add e_1 A \supset \cdots \supset \add e_n A =0.
\]
\end{itemize}
\end{theorem}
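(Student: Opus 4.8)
The plan is to deduce part~(1) from part~(2) and to concentrate all the effort on constructing the chain in~(2). First I would record that the two statements are equivalent: by Theorem~\ref{thm1}, $\proj A$ admits a total right rejective chain if and only if $A$ is right-strongly quasi-hereditary, so~(2) immediately yields~(1) (and conversely). Hence it suffices to produce a total right rejective chain of $\proj A$, equivalently a right-strongly heredity chain of $A$.

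I would package the construction through Theorem~\ref{thm2}: it is enough to exhibit a heredity chain $A=H_0>\cdots>H_n=0$, with $H_i=Ae_iA$, whose associated chain $\proj A=\add e_0A\supset\cdots\supset\add e_nA=0$ is a \emph{coreflective} chain. A heredity chain exists because $\gl A\le 2$ forces $A$ to be quasi-hereditary by \cite[Theorem 2]{DR}, and I would take a maximal one. The cosemisimplicity demanded at each step of a coreflective chain is then free of charge: by Lemma~\ref{coss} it is exactly the relation $(H_i/H_{i+1})J(A/H_{i+1})(H_i/H_{i+1})=0$ built into a heredity chain. So the only substantive point is the coreflectivity of each inclusion $\add e_iA\subset\add e_{i-1}A$, and by Proposition~\ref{iy3} this reduces to showing that $Ae_i$ is projective as a right $e_iAe_i$-module (this also yields coreflectivity inside the intermediate category $\add e_{i-1}A$, since $e_{i-1}Ae_i$ is a direct summand of the right $e_iAe_i$-module $Ae_i$).

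The heart of the argument is an induction on the number of simple modules, peeling off a heredity ideal. Choosing a heredity ideal $H=AfA$ of $A$, one has $\gl(A/H)\le\gl A\le 2$, so by the inductive hypothesis $A/H$ is right-strongly quasi-hereditary; transporting its right-strongly heredity chain back to $A$ produces idempotent ideals $H_i\supseteq H$ with $H_i/H$ projective over $A/H$. To upgrade this to projectivity of each $H_i$ over $A$, I would invoke Lemma~\ref{lem3} with $I=H_i$ and $I'=H$: its relative-projectivity hypothesis is the inductive output, its hypothesis $\mathrm{Tor}^A_2(A/H_i,A/H)=\mathrm{Tor}^{A/H}_2(A/H_i,A/H)=0$ holds because $H$ is heredity and $A/H_i$ has projective dimension at most one over $A/H$, and its remaining hypothesis is precisely the coreflectivity condition $Ae_i$ projective over $e_iAe_i$.

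I expect the genuine obstacle to be establishing that coreflectivity condition, namely that $Ae_i$ is \emph{projective} (not merely of projective dimension one) over $e_iAe_i$. This fails for an arbitrary idempotent even when $\gl A\le 2$: an Auslander algebra has global dimension at most two while $eAe$ can have infinite global dimension, which by Lemma~\ref{idempi}(2) forces $Ae$ to be non-projective over $eAe$. Thus the heredity ideal $H$ and the order on the simples must be chosen so that this projectivity survives the quotient by $H$. The decisive homological input of $\gl A\le 2$ is that $\rad P$ has projective dimension at most one for every $P\in\proj A$; applying the exact functor $\Hom_A(e_iA,-)=(-)e_i$ to the ensuing presentations, and using that the peeled $fA$ lies in $\add e_iA$, I would analyse the modules $gAe_i=(\rad gA)e_i$ for the removed indecomposables $g$ and show they are projective over $e_iAe_i$. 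Controlling the relevant splittings, so as to turn the a priori bound ``projective dimension at most one'' into honest projectivity, is the technical crux, and is exactly the content of the strategy of \cite[Theorem 3.6]{I2} that I would follow.
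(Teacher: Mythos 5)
Your reduction is sound as far as it goes: deducing (1) from (2) via Theorem~\ref{thm1}, getting quasi-heredity from \cite[Theorem 2]{DR}, disposing of cosemisimplicity via Lemma~\ref{coss}, and reformulating the remaining obligation through Theorem~\ref{thm2} and Proposition~\ref{iy3} as ``$Ae_i$ is projective over $e_iAe_i$ for every term of the chain.'' But that obligation \emph{is} the theorem, and you do not prove it --- you explicitly flag it as ``the technical crux'' and defer to the strategy of \cite[Theorem 3.6]{I2}. Worse, your proposed induction does not supply it. You quotient by a bottom heredity ideal $H$ and induct on $A/H$; the inductive hypothesis then gives projectivity of $(A/H)\bar e_i$ over $\bar e_i(A/H)\bar e_i$, whereas Lemma~\ref{lem3} needs projectivity of $Ae_i$ over $e_iAe_i$, a condition over $A$ itself. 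In the paper's proof of Theorem~\ref{thm2} the implication runs the other way (from $Ae_i\in\proj e_iAe_i$ one \emph{deduces} the statement over $A/H_{n-1}$ by tensoring), and there is no general converse, so your induction does not close. You also never say \emph{which} heredity ideal or which order to choose, even though you correctly observe that the claim fails for an arbitrary idempotent when $\gl A\le 2$.

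The paper's argument is genuinely different in structure and contains the two ingredients your sketch lacks. First, the choice: by Lemma~\ref{sim} there is a simple module $S$ of projective dimension exactly one; its idempotent $f$ is peeled off at the \emph{top} of the chain, so that $fJ(A)$ is projective and lies in $\add eA$ with $e=1-f$, making $\add eA$ a cosemisimple right rejective subcategory of $\proj A$ by Proposition~\ref{iy2}. Second, the induction is on $A'=eAe$ (using $\gl eAe\le\gl A$ from Proposition~\ref{iy1}), not on $A/H$, and the decisive step --- the only place $\gl A\le 2$ is really used --- is showing that a monomorphism in $\add eA$ has projective kernel annihilated by $e$, hence isomorphic to a power of $S$, hence zero since $S$ is not projective; this is what lets right rejective subcategories of $\add eA$ remain right rejective in $\proj A$ and allows the chains to be concatenated. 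Until you either carry out an argument of this kind or find a substitute for it, the proposal is a correct framing of the problem rather than a proof.
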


Note that for an artin algebra $A$ of global dimension at most two, we can similarly construct a total left rejective chain 
\[
\proj A=\add \epsilon_0 A \supset \add \epsilon_1 A \supset \cdots \supset \add \epsilon_n A =0.
\]  
Hence such an algebra is left-strongly quasi-hereditary.
However it is not necessarily strongly quasi-hereditary.

We need the following preparation.

\begin{lemma} [Iyama {\cite[Lemma 3.6.1]{I2}}] \label{sim}
Let $A$ be an artin algebra with $\gl A =m$ where $2 \leq m < \infty$.
Then there exists simple right $A$-modules $S$ and $S'$ such that the projective dimensions of $S$ and $S'$ are $m-1$ and $m$ respectively.
\end{lemma}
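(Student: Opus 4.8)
The plan is to establish the two existence statements separately, the second being the substantive one. The module $S'$ is immediate: since $A$ is an artin algebra one has $\gl A = \pd_A(A/J(A)) = \max\{\pd_A S : S \text{ simple}\}$, so some simple module $S'$ realises the maximum, i.e.\ $\pd_A S' = m$. The content of the lemma is therefore the existence of a simple module of projective dimension exactly $m-1$.

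To produce such an $S$, I would first note that the syzygy $\Omega S' = \rad P(S')$ is a module of projective dimension exactly $m-1$ (here $m \geq 2$ guarantees $m-1 \geq 1$, so $\Omega S'$ is nonzero and non-projective), so at least one module of projective dimension $m-1$ exists. The temptation is to read the desired simple off the top, the socle, or a composition factor of $\Omega S'$, but this is exactly where the difficulty lies: a composition factor of a module can have strictly larger projective dimension than the module itself (projective modules are the extreme case), so a priori a module of projective dimension $m-1$ could be assembled entirely from simples of projective dimension $\neq m-1$. Ruling this out is the crux.

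I would argue by contradiction: assume no simple module has projective dimension $m-1$, and among all modules of projective dimension exactly $m-1$ choose one, $N$, of minimal composition length. Then $N$ is not simple, and every composition factor of $N$ is a simple module of projective dimension $\neq m-1$, hence $\leq m-2$ or $=m$. The key step is to find a simple submodule of small projective dimension inside $\soc N$: if some $S_1 \subseteq \soc N$ had $\pd_A S_1 = m$, then applying $\Hom_A(-,X)$ to $0 \to S_1 \to N \to N/S_1 \to 0$ and using $\Ext_A^{m+1}(N/S_1, X)=0$ (as $\gl A = m$) would exhibit $\Ext_A^m(S_1,X)$ as a quotient of $\Ext_A^m(N,X)=0$, forcing $\pd_A S_1 \leq m-1$, a contradiction. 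Hence some socle summand $S_0$ of $N$ satisfies $\pd_A S_0 \leq m-2$.

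Finally I would peel $S_0$ off. From $0 \to S_0 \to N \to N/S_0 \to 0$ with $\pd_A S_0 \leq m-2 < m-1 = \pd_A N$, the long exact sequence for $\Ext_A$ gives both $\pd_A(N/S_0) \leq \max(\pd_A S_0 + 1, \pd_A N) = m-1$ and a surjection $\Ext_A^{m-1}(N/S_0, X) \twoheadrightarrow \Ext_A^{m-1}(N, X) \neq 0$, so $\pd_A(N/S_0) = m-1$ while $N/S_0$ is strictly shorter than $N$, contradicting minimality. The main obstacle, and the single observation that makes the whole argument go through, is that a module of projective dimension strictly less than $\gl A$ cannot contain a submodule of projective dimension equal to $\gl A$; everything else is routine bookkeeping with the standard inequalities for projective dimension along a short exact sequence.
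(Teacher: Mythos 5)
Your proof is correct and takes essentially the same route as the paper: the paper starts from $X=\Omega S'$ (which has projective dimension $m-1$), notes that any simple submodule $L\subseteq X$ satisfies $\pd L\le m-1$ precisely because $\gl A=m$ kills $\Ext^{m+1}(X/L,-)$, and, when $\pd L<m-1$, replaces $X$ by $X/L$ and iterates on length. Your minimal-counterexample formulation is the same induction run in reverse, resting on the identical two long-exact-sequence observations, so there is nothing to add.
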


\begin{proof}
Existence of $S'$ is clear since $\gl A$ is supremum of the projective dimensions of simple $A$-modules.
Let $0 \to X \to P \to S' \to 0$ be an exact sequence with a projective $A$-module $P$.
Then the projective dimension of $X$ is precisely $m-1$.
We assume that $X$ is not simple.
Then there exists a proper simple submodule $L$ of $X$.
Consider the short exact sequence
\begin{equation*}
0 \to L \to X \to X/L \to 0.
\end{equation*}
Since the projective dimension of $X$ is $m-1$ and $\gl A=m$, the projective dimension of $L$ is at most $m-1$.
We assume that the projective dimension of $L$ is strictly less than $m-1$.
Then the projective dimension of $X/L$ is precisely $m-1$.
Therefore we obtain the assertion by replacing $X$ by $X/L$ and repeating this argument. 
\end{proof}

We are ready to prove the main theorem in this subsection.

\begin{proof}[Proof of Theorem \ref{gl}]
(2) We show by induction on the number of simple modules.
We may assume that $A$ is basic. 
Let $n$ be the number of simple $A$-modules.  

Assume that $n=1$. 
Since $A$ is simple, the assertion holds.

For $n \geq 2$ we proceed by induction. 
If $A$ is semisimple, then the assertion is obvious.
Thus we assume that $A$ is non-semisimple.
It follows from Lemma \ref{sim} that there exists a simple $A$-module $S$ such that the projective dimension of $S$ is precisely one since $\gl A=1$ or $\gl A=2$.
Let $f$ be a primitive idempotent of $A$ such that $S=f(A/J(A))$. 
Let $e:=1-f$ and $A':=eAe$.

(i) We claim that $\add eA$ is a cosemisimple right rejective subcategory of $\proj A$ and $\gl A' \leq \gl A \leq 2$.
There exists a short exact sequence
\begin{equation*}
0 \to f J(A) \xrightarrow{\varphi} f A \to S \to 0.
\end{equation*}
Since the projective dimension of $S$ is one, we have $f J(A) \in \proj A$.
Since $fA$ is not an indecomposable direct summand of $f J(A)$, we have $f J(A)  \in \add eA$ as a right $A$-module.
It follows from Proposition \ref{iy2} that $\add eA$ is a cosemisimple right rejective subcategory of $\proj A$.
Thus $A/AeA$ is simple.
Since $\add e A$ is a right rejective subcategory of $\proj A$, it follows from Proposition \ref{iy1} that $\gl A' \leq \gl A \leq 2$.

(ii) We claim that any monomorphism in $\add eA$ is monic in $\proj A$.
Let $a: P_1 \to P_0$ be a monomorphism in $\add eA$.
Then we have an exact sequence 
\begin{equation*}
0 \to \ker a \to P_1 \xrightarrow{a} P_0 \to \cok a \to 0
\end{equation*} 
in $\mod A$.
Since $\gl A \leq 2$, we obtain that $P_2 := \ker a \in \proj A$.
Since $a$ is a monomorphism in $\add e A$, we have $P_2 e= \Hom_{A}(eA, P_2) =0$.
This implies that $P_2$ is a module over a simple algebra $A/A e A$.
Thus we obtain that $P_2$ is isomorphic to $S^l$ for some $l \geq 0$.
If $l>0$, then $S$ is projective as a right $A$-module.
This is a contradiction since the projective dimension of $S$ is one.
Therefore we have $l=0$ and $P_2=0$.
Thus $a$ is a monomorphism of $A$-modules, and hence the assertion follows.

(iii) We claim that any right rejective subcategory $\mathcal{C}$ of $\add eA$ is also right rejective in $\proj A$.
In fact, $A=eA \oplus fA$ and $\add eA$ has a right $\mathcal{C}$-approximation which is monic in $\add eA$ and hence it is also monic in $\proj A$ by (ii).
Similarly, composing a right $\mathcal{C}$-approximation of $fJ(A) \in \add eA$ and $\varphi: fJ(A) \hookrightarrow fA$, we have a right $\mathcal{C}$-approximation of $fA$ which is monic in $\add eA$, and hence in $\proj A$ by (ii).
 
(iv) We complete the proof by induction on the number of simple $A$-modules.
By induction hypothesis, $\proj A' \simeq \add eA$ has a total right rejective chain. 
\[
\proj A' \simeq \add eA \supset \mathcal{C}_1 \supset \cdots \supset \mathcal{C}_{n-1} \supset \mathcal{C}_{n}=0.
\]
Composing it with $\proj A \supset \add eA$, and apply (iii), we have a total right rejective chain of $\proj A$.

(1) The assertion follows from (2) and Theorem \ref{thm1}.
\end{proof}

If $A$ is a strongly quasi-hereditary algebra, then the global dimension of $A$ is at most two {\cite{R}}. 
The converse is not true as the following example shows.

\begin{example} \label{cex}
Let $Q$ be the quiver $1 \gets 2 \to 3$ whose underlying graph is the Dynkin graph $\mathbb{A}_3$ and $A$ the Auslander algebra of $\kk Q$.
Then $A$ is defined by the quiver 
\begin{eqnarray*} 
\xymatrix@=15pt{1 \ar[rd]^{\alpha} && 4 \ar[rd]^{\epsilon} \\
& 2 \ar[ru]^{\beta} \ar[rd]_{\delta} && 6 \\
3 \ar[ru]_{\gamma} && 5 \ar[ru]_{\varphi}
 }
\end{eqnarray*}
with relations $\alpha \beta$, $\gamma \delta$ and $\beta \epsilon -\delta \varphi$.
The global dimension of $A$ is two. However we can not construct a strongly heredity chain of $A$.
This  example can be explained by Theorem \ref{Aus}.
\end{example}  

We end this subsection with describing a certain class of artin algebras which is called Ringel self-dual.
We recall the following result.

\begin{definition-theorem} [Ringel {\cite[Theorem 5]{R2}}] \label{rin}
Let $A$ be a quasi-hereditary algebra and $I:=\{ 1 < \cdots < n \}$. 
Then there exist the indecomposable $A$-modules $T(1), T(2), \ldots, T(n)$ such that
$T(i)$ is Ext-injective in $\mathcal{F}(\Delta)$ and the standard module $\Delta(i)$ is embedded to $T(i)$, with $T(i)/\Delta(i) \in \mathcal{F}(\Delta(j) \;| \; j<i)$.
Let $T:= \bigoplus_{i=1}^n T(i)$ and $R(A):= \End_A(T)^{\op}$.
Then $R(A)$ is a quasi-hereditary algebra with respect to the opposite order of $\leq$. 
We call $R(A)$ a \emph{Ringel dual} of $A$.
\end{definition-theorem}

Let $(A, \leq_A)$ and $(B, \leq_B)$ be quasi-hereditary algebras with simple $A$-modules $\{S_A(i) \; | \; i \in I\}$ and simple $B$-modules $\{S_B(i') \; | \; i' \in I'\}$.
We say that {\it $(A, \leq_A)$ is isomorphic to $(B, \leq_B)$ as a quasi-hereditary algebra} if there exists an algebra isomorphism $f :A \xrightarrow{\sim} B$ such that the induced map $\varphi: I \to I'$ is a poset isomorphism.

Let $(A, \leq)$ be quasi-hereditary algebra and $\leq^{\op}$ the opposite order of $\leq$.
We say that $A$ is {\it Ringel self-dual} if $(A, \leq_A)$ is isomorphic to $(R(A), \leq^{\op})$ as a quasi-hereditary algebra. 

\begin{corollary} \label{rdual}
Let $A$ be a Ringel self-dual algebra.
Then the following conditions are equivalent:

\begin{itemize}
\item[{\rm (i)}] $A$ has global dimension at most two.

\item[{\rm (ii)}] $A$ is strongly quasi-hereditary.

\item[{\rm (iii)}] $A$ is right strongly quasi-hereditary. 
\end{itemize}
\end{corollary}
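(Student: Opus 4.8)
The plan is to reduce the whole corollary to the single implication (iii) $\Rightarrow$ (ii); the other arrows are essentially free. Indeed, (ii) $\Rightarrow$ (iii) is immediate, since a strongly heredity chain is in particular a right-strongly heredity chain in the sense of Definition \ref{sthered}; (ii) $\Rightarrow$ (i) is Ringel's bound that a strongly quasi-hereditary algebra has global dimension at most two \cite{R}; and (i) $\Rightarrow$ (iii) is exactly Theorem \ref{gl}. Hence once (iii) $\Rightarrow$ (ii) is proved, the cycle (i) $\Rightarrow$ (iii) $\Rightarrow$ (ii) $\Rightarrow$ (i) closes and all three conditions are equivalent.

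To prove (iii) $\Rightarrow$ (ii), I would first record how the strong quasi-hereditary property behaves under Ringel duality. Write $\Delta_A(i)$ for the standard modules and $\nabla_A(i)$ for the costandard modules of $(A,\le)$, and let $T$ and $R(A)=\End_A(T)^{\op}$ be as in Definition-Theorem \ref{rin}. By Definition \ref{left}, $(A,\le)$ is right-strongly precisely when $\pd_A\Delta_A(i)\le 1$ for all $i$, and (applying this to $A^{\op}$ and the artin-algebra duality, which sends the standard modules of $A^{\op}$ to the $\nabla_A(i)$) it is left-strongly precisely when $\id_A\nabla_A(i)\le 1$ for all $i$. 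The key step is the claim
\[
(A,\le)\ \text{is right-strongly}\iff (R(A),\le^{\op})\ \text{is left-strongly}.
\]
To establish it I would use that the contravariant functor $\Hom_A(-,T)$ restricts to a duality $\mathcal{F}(\Delta_A)\to\mathcal{F}(\nabla_{R(A)})$ sending $\Delta_A(i)$ to $\nabla_{R(A)}(i)$ and the relative-projective objects $\proj A$ to the relative-injective objects $\inj R(A)$. Because the syzygy $K(i)$ of $\Delta_A(i)$ already lies in $\mathcal{F}(\Delta_A)$ (condition (a) of the highest weight category), the condition $\pd_A\Delta_A(i)\le 1$ says exactly that $\Delta_A(i)$ admits a length-one resolution by objects of $\proj A$ inside $\mathcal{F}(\Delta_A)$; applying the duality turns such a resolution into a length-one $\inj R(A)$-coresolution of $\nabla_{R(A)}(i)$, and conversely. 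Thus $\pd_A\Delta_A(i)=\id_{R(A)}\nabla_{R(A)}(i)$ for every $i$, which is the displayed equivalence.

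Assuming (iii), that $(A,\le)$ is right-strongly, the displayed claim gives that $(R(A),\le^{\op})$ is left-strongly. Since $A$ is Ringel self-dual, the isomorphism $(A,\le)\cong(R(A),\le^{\op})$ of quasi-hereditary algebras transports this back to the statement that $(A,\le)$ is itself left-strongly. Hence $(A,\le)$ is simultaneously right-strongly and left-strongly with respect to one and the same order $\le$; equivalently, the heredity chain attached to $\le$ has all of its ideals $H_i$ projective on both sides, i.e.\ it is a strongly heredity chain in the sense of Definition \ref{sthered}(2). Therefore $A$ is strongly quasi-hereditary, which is (ii).

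The step I expect to be the main obstacle is the bookkeeping of orders. The argument above proves right-strongly $\Leftrightarrow$ left-strongly only with respect to the distinguished self-dual order $\le$, so it yields (iii) $\Rightarrow$ (ii) cleanly when the hypothesis in (iii) is read with respect to $\le$. Accommodating the a priori weaker assumption that $A$ is right-strongly for some possibly unrelated order requires showing that the self-dual structure forces compatibility; in the language of Theorem \ref{gl}(2) this amounts to checking that the total right rejective chain produced on $\proj A$ may be chosen to be left rejective as well, hence a genuine rejective chain, which by Theorem \ref{thm1} is equivalent to a strongly heredity chain. A secondary technical point is justifying the precise Ringel-duality dictionary used above — the duality $\Hom_A(-,T)\colon\mathcal{F}(\Delta_A)\to\mathcal{F}(\nabla_{R(A)})$ and the identification of relative projectives and injectives with $\proj A$ and $\inj R(A)$ — for which one appeals to the standard theory behind Definition-Theorem \ref{rin} \cite{R2}.
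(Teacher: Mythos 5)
Your proposal is correct and follows essentially the same route as the paper: the cycle (ii) $\Rightarrow$ (i) $\Rightarrow$ (iii) $\Rightarrow$ (ii), with the last arrow supplied by the fact that the Ringel dual of a right-strongly quasi-hereditary algebra is left-strongly quasi-hereditary with the opposite order, which the paper simply cites as \cite[Proposition A.2]{R} rather than re-deriving via the duality $\Hom_A(-,T)$ as you do. The order-bookkeeping issue you flag as the main obstacle is not treated in the paper either; it reads all three conditions with respect to the fixed order $\le$ for which $A$ is Ringel self-dual, under which your argument closes as written.
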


\begin{proof}
(ii) $\Rightarrow$ (i): This is shown in {\cite{R}}. 

(i) $\Rightarrow$ (iii): This follows from Theorem \ref{gl} immediately. 

(iii) $\Rightarrow$ (ii): Let $A$ be a right strongly quasi-hereditary algebra. 
Since the Ringel dual of a right-strongly quasi-hereditary algebra is left-strongly quasi-hereditary with the opposite order {\cite[Proposition A.2]{R}}, $A$ is strongly quasi-hereditary. 
\end{proof}

\subsection{Strongly quasi-hereditary Auslander algebras}
Since the global dimensions of Auslander algebras are at most two, it follows from Theorem \ref{gl} that each Auslander algebra is right-strongly quasi-hereditary.
However it is not necessarily true that each Auslander algebra is strongly quasi-hereditary. 
The aim of this subsection is to provide the following characterization of Auslander algebras which are strongly quasi-hereditary. 
Recall that an artin algebra $A$ is a Nakayama algebra if and only if every indecomposable $A$-module is uniserial (see for example {\cite[\S 4.2]{ARS}}).

\begin{theorem} \label{Aus}
Let $A$ be a representation-finite artin algebra and $B$ the Auslander algebra of $A$.
Then the following conditions are equivalent.

\begin{itemize}
\item[{\rm (i)}] $B$ is strongly quasi-hereditary.

\item[{\rm (ii)}] $\proj B$ has a rejective chain.

\item[{\rm (iii)}] $A$ is a Nakayama algebra.
\end{itemize}
\end{theorem}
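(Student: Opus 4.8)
The plan is to get $(\mathrm{i})\Leftrightarrow(\mathrm{ii})$ almost for free and then to translate the existence of a rejective chain of $\proj B$ into a purely module-theoretic condition on $\mod A$. The equivalence $(\mathrm{i})\Leftrightarrow(\mathrm{ii})$ is just the final assertion of Theorem \ref{thm1} applied to $B$: $B$ is strongly quasi-hereditary if and only if $\proj B$ admits a rejective chain. Since $A$ is representation-finite with additive generator $M$ (the direct sum of all indecomposables), the indecomposable projective $B$-modules are identified with $\add M=\mod A$, giving an equivalence $\proj B\simeq\mod A$ under which rejective chains correspond (rejectivity being self-dual, the choice of convention for the Auslander algebra is irrelevant). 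A rejective chain of $\proj B$ thus becomes a rejective chain
\begin{equation*}
\mod A=\mathcal{C}_0\supset\mathcal{C}_1\supset\cdots\supset\mathcal{C}_n=0 .
\end{equation*}
By Proposition \ref{cl} and Proposition \ref{bij} each $\mathcal{C}_j$ is closed under submodules and factor modules, so $\mathcal{C}_j=\mod(A/I_j)$ for an ideal $I_j$. The key observation I would record is that when $\ind\mathcal{C}_{j-1}\setminus\ind\mathcal{C}_j$ is a single indecomposable $X_j$, cosemisimplicity of $\mathcal{C}_j$ in $\mathcal{C}_{j-1}$ is automatic: any radical endomorphism of $X_j$ has proper image, a submodule of $X_j$ lying in $\mathcal{C}_j$, so it factors through $\mathcal{C}_j$. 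After checking (by a length argument, as in the refinement of heredity chains) that such a chain may always be refined so that exactly one indecomposable is removed at each step, condition $(\mathrm{ii})$ becomes: the indecomposable $A$-modules can be ordered $X_1,\dots,X_n$ so that each tail $\{X_j,\dots,X_n\}$ is closed under submodules and factor modules.

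For $(\mathrm{iii})\Rightarrow(\mathrm{ii})$ I would exhibit such an ordering explicitly. If $A$ is Nakayama every indecomposable is uniserial, so listing the indecomposables in order of decreasing composition length makes every tail closed under submodules and factor modules, since both operations can only shorten a uniserial module. By the observation above each step is then automatically a cosemisimple rejective step, producing a rejective chain of $\mod A\simeq\proj B$.

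The substantial direction is $(\mathrm{ii})\Rightarrow(\mathrm{iii})$. Assuming the refined one-at-a-time chain exists, I would analyse the step removing $X_j$ from $\mathcal{C}_{j-1}=\mod(A/I_{j-1})$: writing $\overline{I}=I_j/I_{j-1}$ in $\overline{A}=A/I_{j-1}$, the module $X_j$ is the unique indecomposable $\overline{A}$-module not annihilated by $\overline{I}$, so every proper submodule and every proper factor module of $X_j$ is annihilated by $\overline{I}$. If $\top X_j=\bigoplus_{k=1}^{r}T_k$ with $r\ge 2$, the maximal submodules $M_k=\ker(X_j\to T_k)$ satisfy $M_k\overline{I}=0$ and $\sum_k M_k=X_j$, forcing $X_j\overline{I}=0$, a contradiction; dually, if $\soc X_j=\bigoplus_{k=1}^{s}S_k$ with $s\ge 2$, each proper factor $X_j/\bigoplus_{l\neq k}S_l$ is annihilated by $\overline{I}$, forcing $X_j\overline{I}\subseteq\bigcap_k\bigoplus_{l\neq k}S_l=0$, again a contradiction. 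Hence every indecomposable $A$-module has simple top and simple socle. I would then conclude that $A$ is Nakayama: for an indecomposable projective $P$ this forces $\rad P$ to be indecomposable (a nontrivial decomposition $\rad P=U\oplus V$ would give $\soc U=\soc V=\soc P$, impossible as $U\cap V=0$), and iterating shows each $\rad^k P$ is indecomposable with simple top, so $P$ is uniserial; since every module is a quotient of such a $P$ and quotients of uniserial modules are uniserial, $A$ is a Nakayama algebra.

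The main obstacle is precisely this implication $(\mathrm{ii})\Rightarrow(\mathrm{iii})$: one must extract the simplicity of \emph{both} $\top X_j$ and $\soc X_j$ from the chain, which is where the two-sided (left and right) rejectivity is genuinely used, and one must justify the reduction to chains removing a single indecomposable at each step. The final passage from ``simple top and simple socle for every indecomposable'' to the Nakayama property is a standard module-theoretic argument that I would include for completeness.
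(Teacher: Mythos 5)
Your reduction of (i)$\Leftrightarrow$(ii) to Theorem \ref{thm1} and the identification $\proj B\simeq\mod A$ agree with the paper, but your treatment of (ii)$\Leftrightarrow$(iii) takes a genuinely different route. The paper argues by induction on the length of $A$: for (ii)$\Rightarrow$(iii) it uses Lemma \ref{lNak}, where minimal right almost split morphisms show that each indecomposable removed from a cosemisimple rejective subcategory is projective-injective with indecomposable radical; for (iii)$\Rightarrow$(ii) it peels off one indecomposable projective-injective at a time. You instead refine the chain so that a single indecomposable is removed per step and extract directly that every indecomposable has simple top and simple socle, then invoke the classical characterization of Nakayama algebras. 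Your (ii)$\Rightarrow$(iii) core (annihilation of all proper submodules and factor modules of $X_j$ by $\overline{I}$, the two contradictions, and the passage to uniseriality of indecomposable projectives) is correct and rather self-contained.

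Two steps, however, need repair. First, in (iii)$\Rightarrow$(ii) the justification ``both operations can only shorten a uniserial module'' is insufficient: closure under submodules must be verified for submodules of \emph{direct sums}, and an indecomposable direct summand of a submodule of $W_1\oplus W_2$ need not be a submodule of either $W_i$. For $A=\kk(1\gets 2\to 3)$ the unique indecomposable of length three embeds diagonally into the direct sum of its two length-two factor modules, so the length-ordered tails are \emph{not} closed under submodules there; your stated reasoning would therefore prove every Auslander algebra strongly quasi-hereditary, contradicting (iii) and Example \ref{cex}. The fix uses uniseriality essentially: an indecomposable submodule of a direct sum of uniserial modules has simple socle and hence embeds into a single summand (the paper avoids this issue altogether, since monomorphisms out of and epimorphisms onto a projective-injective split). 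Second, the refinement of a rejective chain to one removing a single indecomposable per step is true but is not ``a length argument'': one needs cosemisimplicity to show that no removed indecomposable is a proper submodule or proper factor module of an object of $\mathcal{C}_{j-1}$ (such a non-isomorphism lies in the radical, hence factors through $\mathcal{C}_{j}$, forcing the module into $\mathcal{C}_{j}$), from which closure of the intermediate subcategories under submodules and factor modules follows. Alternatively, your (ii)$\Rightarrow$(iii) can dispense with the refinement by applying cosemisimplicity directly to the inclusions $M_k\hookrightarrow X_j$ of the maximal submodules.
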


To prove Theorem \ref{Aus}, we need the following observation.

\begin{lemma} \label{lNak}
Let $A$ be an artin algebra and $B$ a factor algebra of $A$ such that $\mod B$ is a cosemisimple subcategory of $\mod A$.
Then the following statements hold.

\begin{itemize}
\item[{\rm (1)}] Let $X$ be an indecomposable $A$-module which does not belong to $\mod B$.
Then $X$ is a projective-injective $A$-module such that $XJ(A)$ is an indecomposable $B$-module.

\item[{\rm (2)}] $B$ is a Nakayama algebra if and only if $A$ is a Nakayama algebra.
\end{itemize}
\end{lemma}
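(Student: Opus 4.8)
The plan is to reduce everything to statement (1), whose heart is to show that an indecomposable $X\notin\mod B$ is projective-injective; statement (2) will then follow formally. Throughout I would use that $\mod A$ is Krull--Schmidt and that $\mod B$, being rejective in $\mod A$ by Proposition \ref{bij}, is in particular a cosemisimple right rejective \emph{and} a cosemisimple left rejective subcategory, so that Proposition \ref{crrs} is available on both sides.

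For the projectivity in (1), I would first invoke the right-hand version of Proposition \ref{crrs}: for $X\in\ind(\mod A)\setminus\ind(\mod B)$ there is a morphism $\varphi\colon Y\to X$ with $Y\in\mod B$ such that $\Hom_A(-,Y)\xrightarrow{\varphi\circ-}\mathcal{J}_{\mod A}(-,X)$ is an isomorphism on $\mod A$. Evaluating this isomorphism at the regular module $A$, and using that $\mathcal{J}_{\mod A}(A,X)$ is identified with $XJ(A)$ (a radical map out of a projective has image in the radical), I read off that $\varphi$ is a monomorphism with $\im\varphi=XJ(A)$; in particular $XJ(A)\cong Y\in\mod B$. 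Now suppose $X$ were not projective and let $p\colon P\to X$ be a projective cover. Since $X$ is indecomposable and $p$ is not a split epimorphism, $p$ is a radical map, hence $p\in\mathcal{J}_{\mod A}(P,X)=\im(\varphi\circ-)$ and factors as $p=\varphi g$; but then $\im p\subseteq\im\varphi=XJ(A)\subsetneq X$, contradicting surjectivity of $p$. Therefore $X$ is projective. Injectivity follows by applying the same argument over $A^{\op}$: under the standard duality $D\colon\mod A\to\mod A^{\op}$ the subcategory $\mod B$ is carried to a cosemisimple right rejective subcategory $\mod B^{\op}$ of $\mod A^{\op}$ and $X$ to $DX\notin\mod B^{\op}$, so $DX$ is a projective $A^{\op}$-module and $X=D(DX)$ is injective.

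It remains to see that $XJ(A)=\rad X$ is an indecomposable $B$-module. Since $X$ is indecomposable injective, $\soc X$ is simple; since $X$ is indecomposable and non-simple, $\soc X\subseteq\rad X$. Hence $\soc(\rad X)=\rad X\cap\soc X=\soc X$ is simple, and a module with simple socle is indecomposable. Combined with $\rad X=XJ(A)\in\mod B$ this completes (1). (If $X$ happened to be simple then $XJ(A)=0$; this degenerate case does not occur for the connected non-semisimple algebras to which the lemma is applied, and it is harmless for (2), where a simple module is already uniserial.)

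Finally, for (2) I would use that the indecomposable $A$-modules split into the indecomposable $B$-modules and the set $\mathcal{X}$ of indecomposables not lying in $\mod B$. If $A$ is a Nakayama algebra, then every indecomposable $A$-module, in particular every indecomposable $B$-module, is uniserial, so $B$ is Nakayama. Conversely, if $B$ is Nakayama, every indecomposable $B$-module is uniserial; and for $X\in\mathcal{X}$, part (1) shows that $X$ is indecomposable projective, so $\top X$ is simple, while $\rad X=XJ(A)$ is an indecomposable $B$-module and hence uniserial. Thus $X$ is uniserial, and $A$ is Nakayama. The main obstacle is the projective-injectivity in (1): the crucial point is to extract $\im\varphi=XJ(A)$ from the representability isomorphism of Proposition \ref{crrs} and to play it off against the radicality of a non-split projective cover; once this is in place, the indecomposability of $\rad X$ is the clean observation that its socle coincides with the simple socle of $X$.
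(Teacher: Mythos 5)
Your overall architecture matches the paper's (Proposition \ref{crrs} to produce $\varphi\colon Y\to X$, a factorization argument for projectivity, duality for injectivity, a socle argument for indecomposability of $XJ(A)$, and a reduction of (2) to (1)), but the key step --- projectivity of $X$ --- is circular as written. The identification $\mathcal{J}_{\mod A}(A,X)\cong XJ(A)$ is \emph{not} valid for arbitrary $X$: a radical morphism out of a projective need not have image in the radical of the target unless the target is itself projective. For instance, for $A=\kk[x]/(x^2)$ and $X=\kk$ one has $\mathcal{J}_{\mod A}(A,X)=\Hom_A(A,X)\cong X\neq 0=XJ(A)$, since $A\not\cong X$ forces every morphism $A\to X$ to be radical. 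So you may not conclude $\im\varphi=XJ(A)$ before knowing that $X$ is projective, and the contradiction you derive from ``$\im p\subseteq\im\varphi=XJ(A)\subsetneq X$'' rests on exactly that unproved identification.

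The gap is local and easily closed inside your own argument: from $p=\varphi g$ with $p$ a projective cover you get $X=\im p\subseteq\im\varphi$, so $\varphi$ is surjective and $X$ is a factor module of $Y\in\mod B$; since $\mod B$ is closed under factor modules, this gives $X\in\mod B$, contradicting the hypothesis. (Equivalently: if $X$ is not projective then every morphism $A\to X$ is radical, so $\varphi\circ-\colon\Hom_A(A,Y)\to\Hom_A(A,X)$ is bijective and $\varphi$ is an isomorphism, again forcing $X\in\mod B$.) This is in substance what the paper does, phrased by recognizing $\varphi$ as a minimal right almost split morphism, which is surjective whenever $X$ is indecomposable non-projective. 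Once projectivity is secured, your computation $\im\varphi=XJ(A)$ becomes legitimate, and the duality argument for injectivity, the socle argument for indecomposability of $XJ(A)$ (modulo the degenerate case $XJ(A)=0$, which the paper's proof also glosses over), and your proof of (2) --- which verifies uniseriality of every indecomposable directly rather than invoking, as the paper does, the characterization of Nakayama algebras via indecomposable projectives and injectives --- are all correct.
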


\begin{proof}
(1) By Proposition \ref{crrs}, there exists a morphism $\varphi : Y\to X$ of $A$-modules such that $Y \in \mod B$ and $\Hom_A(-,Y) \xrightarrow{\varphi \circ -} \mathcal{J}_{\mod A}(-,X)$ is an isomorphism on $\mod A$.
Then $\varphi$ is a minimal right almost split morphism of $X$ in $\mod A$.
If $X$ is not a projective $A$-module, then $\varphi$ is surjective and hence $X\in\mod B$, a contradiction.
Therefore $X$ is a projective $A$-module, and $\varphi$ is an inclusion map $XJ(A) \to X$.
Thus $XJ(A)=Y$ is a $B$-module.
The dual argument shows that $X$ is an injective $A$-module, and hence $XJ(A)$ is indecomposable.

(2) Since the ``if'' part is obvious, we prove the ``only if'' part.
Let $M$ be an indecomposable $A$-module which is either projective or injective.
We show that $M$ is a uniserial $A$-module.
If $M$ is a $B$-module, then this is clear. 
Assume that $M$ is not a $B$-module.
By (1), $M$ is a projective-injective $A$-module such that $M J(A)$ is an indecomposable $B$-module.
Since $B$ is a Nakayama algebra, $M J(A)$ is uniserial.
Hence $M$ is also uniserial.
\end{proof}

We are ready to show the main theorem in this subsection.

\begin{proof}[Proof of Theorem \ref{Aus}]
It suffices to show from Theorem \ref{thm1} that (ii) is equivalent to (iii).

(ii) $\Rightarrow$ (iii): We show by induction on the length $l(A)$ of $A$ as a right $A$-module.
If $l(A)=1$, then this is clear.
For $l(A) \geq 2$ we proceed by induction.
Since $B$ is a strongly quasi-hereditary algebra, it follows from Theorem \ref{thm1} that $\proj B \simeq \mod A$ has a rejective chain
\[
\mod A \supset \mathcal{C}_1 \supset \cdots \supset \mathcal{C}_n \supset 0.
\]
Since $\mathcal{C}_1$ is a rejective subcategory of $\mod A$, there exists a two-sided ideal $I$ of $A$ such that $\mathcal{C}_1=\mod (A/I)$ by Proposition \ref{bij}.
It follows from the induction hypothesis that $A/I$ is a Nakayama algebra.
Therefore we obtain from Lemma \ref{lNak} (2) that $A$ is also a Nakayama algebra.

(iii) $\Rightarrow$ (ii): We show by induction on $l(A)$.
If $l(A)=1$, then the assertion holds.
For $l(A) \geq 2$, we prove that $\mod A$ has a rejective chain by induction.
Since $A$ is a Nakayama algebra, there exists an indecomposable projective-injective $A$-module $P$.
Let $M$ be a direct sum of all indecomposable $A$-modules which are not isomorphic to $P$ and $\mathcal{C}_1 := \add M$.
Then $\mathcal{C}_1$ is closed under factor modules and submodules.
It follows from Proposition \ref{bij} that there exists a two-sided ideal $I$ of $A$ such that $\mathcal{C}_1 = \mod (A/I)$.
On the other hand, we have $\ind (\mod A) \setminus \ind (\mathcal{C}_1)= \{P \}$.
Since the inclusion map $\varphi : PJ(A) \to P$ gives an isomorphism $\Hom_A(-, PJ(A)) \xrightarrow{\varphi \circ -} \Hom_A(-, P)$ on $\mod A$, we obtain from Proposition \ref{crrs} that $\mod A/I$ is a cosemisimple right rejective subcategory of $\mod A$.
Since $l(A)> l(A/I)$, we obtain from the induction hypothesis that there exists a rejective chain $\mod (A/I) \supset \cdots \supset \mathcal{C}_i \supset \cdots \supset 0$ of $\mod (A/I)$.
Composing with $\mod A \supset \mod (A/I)$, we have a rejective chain of $\mod A$. 
\[
\mod A \supset \mathcal{C}_1 = \mod (A/I) \supset \cdots \supset \mathcal{C}_n \supset 0.
\]
The proof is complete.
\end{proof}

\subsection*{Acknowledgement}
The author is greatly indebted to Osamu Iyama for his insightful comments and permission to include detailed proof of results in \cite{I2} in this paper.
The author would like to thank Kentaro Wada, Ryoichi Kase and Yasuaki Ogawa for their valuable comments.
The author would also like to thank Hyohe Miyachi and Yoshiyuki Kimura for their assistance.

\end{document}